\documentclass[10pt, reqno]{amsart}
\usepackage{latexsym}
\usepackage{graphicx}
\usepackage[USenglish]{babel}
\usepackage{amsfonts}
\usepackage{amsmath, amsthm, amssymb, mathtools}
\usepackage[colorlinks=true, citecolor=blue, linkcolor=blue, urlcolor=blue]{hyperref}
\usepackage{tikz}
\usepackage{multirow}
\usepackage{siunitx}
\usepackage{float}
\usepackage{caption}
\usepackage{subcaption}
\usepackage{enumitem}
\usepackage[dvipsnames]{xcolor}
\usepackage{booktabs}
\mathtoolsset{showonlyrefs}
\usepackage{url}
\usepackage[backend=biber, url=false, style=numeric, doi=false]{biblatex}
\definecolor{nppcol}{rgb}{0.20,0.60,1.00}
\definecolor{gscol}{rgb}{0.85,0.20,0.55}
\definecolor{ruhecol}{rgb}{0.10,0.65,0.35}
\newcommand{\lsnpp}{\raisebox{2pt}{\tikz{\draw[solid,nppcol,line width=1.2pt] (0,0)--(0.40,0);}}}
\newcommand{\lsgs}{\raisebox{2pt}{\tikz{\draw[dashed,gscol,line width=1.2pt] (0,0)--(0.40,0);}}}
\newcommand{\lsruhe}{\raisebox{2pt}{\tikz{\draw[dotted,ruhecol,line width=1.2pt] (0,0)--(0.40,0);}}}

\theoremstyle{plain}
\newtheorem{theorem}{Theorem}[section]
\newtheorem{lemma}[theorem]{Lemma}

\theoremstyle{definition}

\newtheorem{example}[theorem]{Example}
\theoremstyle{remark}
\newtheorem{rmk}[theorem]{Remark}
\allowdisplaybreaks

\newcommand{\M}{\mathsf{M}}

\newcommand{\C}{\mathbb{C}}
\newcommand{\U}{\mathrm{U}}
\newcommand{\F}{\mathbb{F}}

\newcommand{\R}{\mathbb{R}}

\newcommand{\T}{^{\mathsf{T}}}

\newcommand{\rank}{\operatorname{rank}}

\newcommand{\colspace}{\operatorname{col}}

\newcommand{\argmin}{\operatorname{argmin}}
\newcommand{\argmax}{\operatorname{argmax}}

\title[The Normal Procrustes Problem]{The Normal Procrustes Problem: A Riemannian Optimization Approach}
\author{Kyle Bierly}
\address{Department of Mathematics,
The University of British Columbia,
1984 Mathematics Road,
Vancouver, BC, Canada V6T 1Z2}
\email{kylebierly@math.ubc.ca}
\subjclass[2020]{15A60, 15B10, 65F30, 65K10, 90C26}
\keywords{Normal Procrustes Problem, Real Normal Procrustes Problem, Closest Normal Matrix Problem, Real Closest Normal Matrix Problem, Riemannian optimization}

\begin{document}
\begin{abstract}
    For given $m\times n$ data matrices $X,Y,$ we investigate the Normal Procrustes Problem---the least squares optimization problem that aims to minimize $\|AX-Y\|_F^2$, where $A$ is constrained to be a normal $m\times m$ matrix. As far as the author of this article is aware, no other method that attempts to solve the Normal Procrustes Problem exists in the literature; we thus propose what is, to our knowledge, the first such method. We, furthermore, adapt our approach to address the Real Normal Procrustes Problem, where $A$ must be real. In our treatment of these problems, we first reduce our complex and real objective functions to be purely optimizable over the Riemannian manifolds of the unitary and real orthogonal matrices, respectively. This reduction enables us to apply techniques in Riemannian manifold optimization to approximate solutions to both. The Closest Normal Matrix and Real Closest Normal Matrix Problems are both special cases of their respective Procrustes Problems and have been previously studied in the literature. Our approach thus recovers a novel Riemannian optimization method for approximating solutions to both these problems. We further numerically test the performance of our method across all such problems (including against previously developed algorithms on the Closest Normal Matrix Problems) and obtain competitive residuals and favorable scaling in wall-clock time. 
    \end{abstract}
    \maketitle

\section{Introduction}\label{sec:intro}
Let the matrices $0\neq X,Y\in \M_{m\times n}(\F)$ (where $\F=\C$ or $\R$) be given. The $\mathcal{P}$-\textit{Procrustes Problem} aims to determine an optimal matrix $A\in \mathcal{P} \subseteq \M_{m}(\F)$ that minimizes
\begin{equation}\label{initialProblem}
    \|AX-Y\|_F^2,
\end{equation}
where $\|\cdot\|_F$ denotes the Frobenius norm. We assume $m \geq n \geq 1$ throughout without loss of generality; if $m<n$, right-multiplying by the
right singular vectors of $X$ reduces the problem to one with $\rank(X) \leq m$ columns,
up to an additive constant.

In this paper, we consider when $A$ is \textit{normal}, or \linebreak $A\in \mathcal{N}=\{B\in \M_m(\C)\mid B^*B=BB^*\}$; for the real normal case, we write \linebreak $A\in \mathcal{N}_{\R}=\{B\in \M_m(\R)\mid B\T B=BB\T\}$, such that the conjugate transpose, $^*,$ is replaced with the transpose, $\T$. 

The Normal Procrustes Problem (NPP) can be stated as determining
\begin{equation}\label{eq:procrustes}
   A^\star\in \underset{\substack{A \in \mathcal{N}}}{\argmin} \|AX - Y\|_F^2,
\end{equation}
where $X,Y\in \M_{m\times n}(\C)$. Note that this set may be empty; we do not attempt to determine whether the infimum is always attained by some $A \in \mathcal{N}$. 

The motivation for investigating the above problem arose in \cite{ltp}, in which necessary and sufficient conditions on $X$ and $Y$ are determined such that \eqref{initialProblem} is equal to zero; when $A\in \mathcal{P}$, this particular problem is referred to as the $\mathcal{P}$-\textit{Targeting Problem}. Such conditions were determined for various classes of matrices, such as when $A$ is unitary, Hermitian, an orthogonal projection matrix, and more. The case in which $A$ is normal was found to be particularly difficult. Indeed, smaller results were found, such as when the spectrum of a normal $A$ is constrained to lie in $\{\lambda,\mu\},$ where $\lambda,\mu \in \C$, but the general Normal Targeting Problem remains unsolved. 

By leveraging the singular value decomposition of $X$, one can often reduce the $\mathcal{P}$-Targeting Problem to a $\mathcal{P}$-\textit{Matrix Completion Problem} (see \cite[Theorem 2.2]{ltp}), in which the first $\rank(X)= r$ columns of $A$ are given, and one must then determine whether there exist remaining $m-r$ columns such that some $A\in \mathcal{P}$ can be constructed. The Normal Matrix Completion Problem is a well-studied, difficult problem~\cite{scalone, jiang}. To the best of the author's knowledge, there does not currently exist a closed-form solution to the Normal Matrix Completion Problem, wherein lies the difficulty of solving the Normal Targeting Problem.

Hence, we take an alternative, more tractable approach to tackling this problem. Instead of determining the exact conditions when $AX=Y$, we turn to the more general NPP and approximate $A^\star$ via Riemannian manifold optimization techniques. Note that when $X=I_m$ (the identity matrix) and $Y\in \M_m(\C)$, the NPP recovers the \textit{Closest Normal Matrix Problem} (CNP). The CNP is studied in \cite{scalone} (the real version, where $A\in \mathcal{N}_{\R}$ and $Y\in \M_m(\R)$) and \cite{kovac, gabriel, ruhe} (the complex version); we will later compare the optimization capabilities of this paper with those of the aforementioned papers, which are seemingly the most related prior research to the findings of this paper. Although various types of Procrustes Problems have been thoroughly investigated since 1962 \cite{gower, hurley}, the author was unable to find any prior literature specifically on the NPP, so the findings of this article are believed to be novel, and they offer a more general framework than the CNP literature by allowing $X$ to be a variable.  

Our approach employs the spectral decomposition property of normal matrices, through which every normal matrix ${A \in \M_m(\C)}$ admits a decomposition
\begin{equation}\label{eq:normalDecomp}
    A = UDU^*,
\end{equation} where $U \in \M_m(\C)$ is unitary, and
$D = \mathrm{diag}(d_1, \ldots, d_m) \in \M_m(\C)$ is diagonal and comprises the eigenvalues of $A$. An optimal $D$ can be determined in closed form in terms of $U$ and the given matrices $X$ and $Y$ in \eqref{eq:procrustes}. In Section \ref{sec:complexRed}, we do exactly this and reduce our objective \eqref{eq:procrustes} to an equivalent optimization problem over the matrix Riemannian manifold of the unitary matrices, $\mathrm{U}(m)$.

This strategy closely follows the approach of \cite{noferini}, in which the authors approximate the solution to the Closest $\Omega$-stable Matrix Problem (i.e., determining the nearest matrix with all its eigenvalues in a prescribed closed set $\Omega$) by following a similar reduction process over $\mathrm{U}(m)$. This manifold---viewed as a Riemannian submanifold of a Euclidean space---provides us with tools such as the Riemannian gradient and Hessian (Section \ref{sec:complexGradient}), enabling us to run second-order solvers over it within the MATLAB Riemannian manifold optimization package Manopt \cite{manopt} via its built-in \texttt{unitaryfactory}. For further reading on the theory and applications related to optimization over Riemannian manifolds, see Boumal's textbook \cite{boumal}. 

Since our reduced, negated objective function is not geodesically concave over $\mathrm{U}(m)$ (demonstrated in Section~\ref{sec:complexRed}), Riemannian gradient methods cannot guarantee convergence toward a global maximum. However, our numerical findings (Section~\ref{sec:complexNum}) remain encouraging. For this complex case, our solver demonstrates strong consistency across random starts. In every trial the author conducted for which the global minimum residual is given to be zero beforehand, our algorithm attains a final residual of less than $10^{-10}$. Furthermore, we compare the performance of our model with two established algorithms for the CNP (again when $X = I_m$):
\begin{itemize}
    \item Ruhe's seminal Jacobi-type algorithm in \cite{ruhe}.
    \item Guglielmi and Scalone's more recent two-level gradient system algorithm in \cite{scalone}, which, although motivated by the Real CNP, is nonetheless generalizable to the complex case.
\end{itemize}
In both comparisons, our method attains essentially equivalent final residuals while scaling more favorably in wall-clock time as the amount of data increases.

In Section \ref{sec:realRed}, we consider the Real NPP, where we rewrite \eqref{eq:procrustes} such that $A\in \mathcal{N}_\R$ and $X,Y \in \M_{m\times n}(\R)$. We similarly, with a little more nuance than the complex version, reduce the objective to be over the orthogonal matrix Riemannian manifold $\mathrm{O}(m)$. In Section \ref{sec:realGradient}, we again compute the Riemannian gradient and Hessian of our objective function. Finally, in Section~\ref{sec:realNum}, we demonstrate, as in the complex case, numerical results for the Real NPP; we observe that our method scales more favorably in solve time than Guglielmi and Scalone's algorithm as the amount of data increases. However, our solver terminates at suboptimal normal matrices more frequently than in the complex case, likely reflecting a greater nonconcavity of the real objective.

Our code for our numerical experiments is made publicly available
via the repository: \href{https://github.com/kbierly/NPP}{https://github.com/kbierly/NPP}.

\section*{Acknowledgements}
The author would like to note that he was unaware, until this article was already being revised, that Matvei Zhukov \cite{zhukov} recently presented a poster titled ``Computing Nearest Normal Matrix Using Riemannian Optimization'' at the Foundations of Computational Mathematics 2026; while the author did not attend this conference, the abstract suggests that Zhukov's methods were likely similar to those of this paper---although they appear to be specific to the CNP and the Real CNP, not the more general Procrustes Problems.

Nicolas Gillis suggested investigating whether Riemannian manifold optimization techniques could be implemented on the reduced objective \eqref{eq:fU} in Theorem~\ref{thm:reduced}. Gillis also made the author aware of Zhukov's related work. Nicola Guglielmi and Carmela Scalone generously shared their implementation of their algorithm. Stephan Ramon Garcia provided helpful comments on an initial draft of the paper.

\section{Reduction of the NPP}\label{sec:complexRed}
Let $X,Y\in \M_{m\times n}(\C)$, and let $A\in \mathcal{N}$. In the following section, we work to reduce the NPP to an equivalent optimization problem over $\mathrm{U}(m)$.
Via the spectral decomposition of $A$ in \eqref{eq:normalDecomp} and under unitary invariance of the Frobenius norm, we can rewrite
\begin{equation}\label{eq:reduced}
    \|AX - Y\|_F^2 = \|UDU^*X - Y\|_F^2 = \|DU^*X - U^*Y\|_F^2=\sum_{i=1}^m \|d_i (U^*X)_i - (U^*Y)_i\|_F^2,
\end{equation}
where $(U^*X)_i$ and $(U^*Y)_i$ denote the $i$th rows of $U^*X$ and $U^*Y$ respectively. Since the terms in~\eqref{eq:reduced} are decoupled across $i$, we minimize over each $d_i$ independently.
Before we do so, we define:
\begin{align*}
\phi_i := \|(U^*X)_i\|_F^2, \quad \text{and}\quad \gamma_i := (U^*Y)_i(U^*X)_i^*.
\end{align*}
Note that the following lemma (essentially the solution to the Diagonal Procrustes Problem) is already well-known in the Procrustes literature---as noted in \cite[Section 4.2.2]{gillis}. We state the proof here for completeness.

\begin{lemma}\label{lem:optD}
  For fixed $U \in \mathrm{U}(m)$, a global minimizer of~\eqref{eq:reduced}
  over \linebreak $D = \mathrm{diag}(d_1,\ldots,d_m)$ is given by (for $i=1,\ldots,m$):
  \begin{equation}\label{eq:optdi}
      d_i^\star =  
      \begin{cases}
          \dfrac{\gamma_i}
                {\phi_i}
          & \text{if } (U^*X)_i \neq 0, \\[8pt]
          0 & \text{if } (U^*X)_i = 0.
      \end{cases}
  \end{equation}
\end{lemma}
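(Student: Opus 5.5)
Because the objective in \eqref{eq:reduced} splits as a sum of $m$ terms, and the $i$th term depends only on $d_i$, minimizing the sum over $D$ amounts to minimizing each term separately over $d_i\in\C$. So I will fix $i$, write $x := (U^*X)_i$ and $y := (U^*Y)_i$ (both row vectors in $\C^{1\times n}$), and minimize the scalar function $g(d) := \|d x - y\|_F^2$ over $d \in \C$. In this notation $\phi_i = x x^*$ and $\gamma_i = y x^*$.

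The first case is $x = 0$. Then $g(d) = \|y\|_F^2$ for every $d$, so $g$ is constant and every $d$ is a minimizer. In particular $d_i^\star = 0$ is one, which matches \eqref{eq:optdi}.

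The second case is $x \neq 0$, so $\phi_i > 0$. I will expand using the Frobenius inner product $\langle a,b\rangle = a b^*$:
\[
g(d) = |d|^2 \phi_i - 2\,\mathrm{Re}\bigl(d\, x y^*\bigr) + \|y\|_F^2 .
\]
Since $x y^* = \overline{\gamma_i}$, completing the square gives
\[
g(d) = \phi_i \Bigl| d - \frac{\gamma_i}{\phi_i} \Bigr|^2 + \|y\|_F^2 - \frac{|\gamma_i|^2}{\phi_i}.
\]
Because $\phi_i>0$, this is uniquely minimized at $d = \gamma_i/\phi_i$. An equivalent route is to view $d x$ as ranging over the one-dimensional subspace $\operatorname{span}\{x\}$ of $\C^{1\times n}$ and take the orthogonal projection of $y$ onto it; that projection has coefficient $\langle y, x\rangle/\langle x,x\rangle = y x^*/(x x^*)$.

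Putting the two cases together, each term attains its minimum at $d_i^\star$, so the whole sum does too, and $D^\star = \mathrm{diag}(d_1^\star,\ldots,d_m^\star)$ is a global minimizer for fixed $U$. The only step that needs care is the conjugation convention: one has to check that the real cross term matches $\gamma_i = y x^*$ and not its conjugate, so that the minimizer is $\gamma_i/\phi_i$ rather than $\overline{\gamma_i}/\phi_i$. The projection viewpoint resolves this directly, since the coefficient of the projection of $y$ onto $x$ is $y x^*/\|x\|^2$. No other step presents an obstacle.
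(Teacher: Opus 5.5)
Your proposal is correct and follows essentially the same route as the paper: decouple the sum across $i$, handle $(U^*X)_i=0$ by noting the term is constant in $d_i$, and otherwise expand and complete the square to get $\phi_i\,|d_i-\gamma_i/\phi_i|^2$ plus terms independent of $d_i$. Your conjugation check ($xy^*=\overline{\gamma_i}$) and the projection remark agree with the paper's expansion, whose cross term is $-2\,\mathrm{Re}(\bar{\gamma}_i d_i)$.
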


\begin{proof}
  If $(U^*X)_i = 0$, the term $\|d_i (U^*X)_i - (U^*Y)_i\|_F^2 = \|(U^*Y)_i\|_F^2$ is
  independent of $d_i$; set $d_i^\star = 0$. Otherwise, $0<\phi_i$, and we complete the square:
  \begin{align*}
      \|d_i (U^*X)_i - (U^*Y)_i\|_F^2
      &= \phi_i\,|d_i|^2
        - 2\,\mathrm{Re}(\bar{\gamma}_i\, d_i)
        + \|(U^*Y)_i\|_F^2 \\
      &= \phi_i\left|d_i - \frac{\gamma_i}{\phi_i}\right|^2
        - \frac{|\gamma_i|^2}{\phi_i}
        + \|(U^*Y)_i\|_F^2.
  \end{align*}
  Since the first term is nonnegative and the remaining terms are independent of $d_i$,
  this is minimized if and only if
  \[
      d_i^\star = \frac{\gamma_i}{\phi_i}. \qedhere
  \]
\end{proof}

We write $D^\star_U$ to emphasize that the optimal diagonal depends on $U$; the scalars $\phi_i$, $\gamma_i$, and $d_i^\star$ likewise depend on $U$, though we suppress this in the notation. We can now state the main reduction:

\begin{theorem}\label{thm:reduced}
  The Normal Procrustes Problem~\eqref{eq:procrustes} is equivalent to determining
  \begin{equation}\label{eq:Uminimize}
      U^\star\in \underset{{U \in \mathrm{U}(m)}}{\argmax}\,f(U),
  \end{equation}
  where
  \begin{equation}\label{eq:fU}
      f(U) = \sum_{i=1}^m \frac{|\gamma_i|^2}{\phi_i}=\sum_{i=1}^m |d^\star_i|^2\phi_i=\|D^\star_U U^* X \|_F^2
  \end{equation}
  with the convention that the $i$th term equals zero when $(U^*X)_i = 0$. An optimal $A^\star = U^\star D^\star_{U^\star} (U^{\star})^*$
  is normal, where ${D^\star_U = \mathrm{diag}(d_1^\star, \ldots, d_m^\star)}$
  is given by Lemma~\ref{lem:optD}.
\end{theorem}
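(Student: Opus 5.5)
The plan is to turn the minimization over normal $A$ into a nested minimization over pairs $(U,D)$, solve the inner problem in closed form with Lemma~\ref{lem:optD}, and then read off the reduced objective. By the spectral decomposition \eqref{eq:normalDecomp}, the map $(U,D)\mapsto UDU^*$ from $\mathrm{U}(m)\times\{\text{complex diagonal matrices}\}$ onto $\mathcal{N}$ is surjective. Conversely, every $UDU^*$ of this form is normal, since $(UDU^*)^*(UDU^*)=U\bar DDU^*=UD\bar DU^*$. Hence
\[
\inf_{A\in\mathcal N}\|AX-Y\|_F^2=\inf_{U\in\mathrm U(m)}\ \inf_{D}\|DU^*X-U^*Y\|_F^2 ,
\]
where the second expression uses \eqref{eq:reduced}. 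I will also show that minimizers correspond: if $A^\star$ is optimal with $A^\star=UDU^*$, then $(U,D)$ is optimal for the double problem, and conversely any optimal pair yields an optimal normal matrix.

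The next step is to evaluate the inner infimum for fixed $U$. The completing-the-square identity in the proof of Lemma~\ref{lem:optD} shows that for each $i$ with $(U^*X)_i\neq0$ the minimal value of the $i$th term is $\|(U^*Y)_i\|_F^2-|\gamma_i|^2/\phi_i$. When $(U^*X)_i=0$ the minimal value is $\|(U^*Y)_i\|_F^2$, which matches the convention that the $i$th term of $f$ is zero. Summing over $i$ and using $\sum_i\|(U^*Y)_i\|_F^2=\|U^*Y\|_F^2=\|Y\|_F^2$ (unitary invariance), I obtain
\[
\min_D\|DU^*X-U^*Y\|_F^2=\|Y\|_F^2-f(U).
\]
Since $\|Y\|_F^2$ does not depend on $U$, minimizing the original objective is equivalent to maximizing $f$ over $\mathrm U(m)$. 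Equivalently, $U^\star$ attains the maximum in \eqref{eq:Uminimize} exactly when $A^\star=U^\star D^\star_{U^\star}(U^\star)^*$ attains the minimum in \eqref{eq:procrustes}, and that $A^\star$ is normal by the computation above. If $A^\star$ is any minimizer, writing $A^\star=UDU^*$ gives
\[
\|Y\|_F^2-f(U)\le\|A^\star X-Y\|_F^2\le\|Y\|_F^2-f(U')\quad\text{for every }U',
\]
so $U$ maximizes $f$. The same inequalities give the converse direction.

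Finally I will verify the three expressions for $f$ in \eqref{eq:fU}. Substituting $d_i^\star=\gamma_i/\phi_i$ gives $|d_i^\star|^2\phi_i=|\gamma_i|^2/\phi_i$. Moreover, the $i$th row of $D^\star_UU^*X$ is $d_i^\star(U^*X)_i$, whose squared norm is $|d_i^\star|^2\phi_i$, and this also holds, with value zero, when $d_i^\star=0$. I expect the only delicate point to be stating the equivalence carefully when the supremum of $f$ or the infimum of the NPP is not attained. The identity $\inf_{\mathcal N}\|AX-Y\|_F^2=\|Y\|_F^2-\sup_{\mathrm U(m)}f$ holds regardless, together with the correspondence of optimizers described above. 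I will also note that $f$ may be discontinuous where some $\phi_i$ vanishes, which is why the equivalence is phrased through optimal values and optimizers rather than through any continuity argument.
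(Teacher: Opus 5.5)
Your proposal is correct and follows essentially the same route as the paper: substitute the optimal diagonal from Lemma~\ref{lem:optD} into each row residual, sum to get $\|Y\|_F^2 - f(U)$ by unitary invariance, and conclude that minimizing the residual over normal matrices is equivalent to maximizing $f$ over $\mathrm{U}(m)$. Your version is more careful than the paper's, since it treats rows with $(U^*X)_i = 0$ explicitly, proves the correspondence of optimizers via the two-sided inequality, and checks the three expressions for $f$, but the underlying argument is the same.
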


\begin{proof}
  Suppose $(U^*X)_i \neq 0$. Substituting $d_i^\star = \gamma_i/\phi_i$
  into $\|d_i^\star (U^*X)_i - (U^*Y)_i\|_F^2$, the residual for the $i$th row is:
  \begin{align*}
      \left\|\frac{\gamma_i}{\phi_i}(U^*X)_i - (U^*Y)_i\right\|_F^2
      &= \frac{|\gamma_i|^2}{\phi_i}
        - \frac{2|\gamma_i|^2}{\phi_i}
        + \|(U^*Y)_i\|_F^2 \\
      &= \|(U^*Y)_i\|_F^2 - \frac{|\gamma_i|^2}{\phi_i}.
  \end{align*}
  Summing over $i$ yields
  \begin{align*}
      \sum_{i=1}^m \left(\|(U^*Y)_i\|_F^2 - \frac{|\gamma_i|^2}{\phi_i}\right)
      = \|U^*Y\|_F^2 - \sum_{i=1}^m\frac{|\gamma_i|^2}{\phi_i}
      = \|Y\|_F^2 - \sum_{i=1}^m\frac{|\gamma_i|^2}{\phi_i},
  \end{align*}
  where we used unitary invariance of the Frobenius norm. Since $\|Y\|_F^2$ is
  constant, minimizing over $\mathrm{U}(m)$ is equivalent to determining~\eqref{eq:Uminimize}.
\end{proof}

\begin{rmk}
    The objective $f$ admits a natural geometric interpretation. Let \linebreak ${\theta_i(U) \in [0, \pi/2]}$
be the Hermitian angle between the $i$th rows of $U^*X$ and $U^*Y$, defined by
\[
    \cos\theta_i(U)
    = \frac{|(U^*Y)_i(U^*X)_i^*|}
           {\|(U^*X)_i\|_F\,\|(U^*Y)_i\|_F}
\]
when both rows are nonzero. Then~\eqref{eq:fU} becomes
\begin{equation}\label{eq:fgeom}
    f(U) = \sum_{i=1}^m \|(U^*Y)_i\|_F^2\cos^2\theta_i(U),
\end{equation}
a weighted sum of squared cosines of Hermitian angles between corresponding rows
of $U^*X$ and $U^*Y$, weighted by the squared norms of the rows of $U^*Y$.
Thus, maximizing $f$ is equivalent to finding a unitary transformation that
simultaneously aligns each row $(U^*X)_i$ with the corresponding $(U^*Y)_i$
as well as possible (weighted by $\|(U^*Y)_i\|_F^2$).
\end{rmk}

Before we proceed to optimizing this function via Riemannian gradient techniques, 
we note that $f$ is not geodesically concave in general, meaning there exists a 
geodesic (a generalization of a straight line to the manifold, see \cite[Definition 5.38]{boumal}) along which the concavity 
inequality fails.

\begin{example}\label{ex:geodesicComplex}Take $m=n=2$, $X=I_2$, $Y=e_1e_1^*$, and consider the geodesic
\[
    c(t) = \begin{bmatrix}\cos(\tfrac{t\pi}{2}) & -\sin(\tfrac{t\pi}{2})\\
    \sin(\tfrac{t\pi}{2}) & \cos(\tfrac{t\pi}{2})\end{bmatrix}, \quad t\in[0,1].
\]
Since $X = I_2$, we have $\phi_i = 1$ for $i=1,2$. Computing directly via Theorem \ref{thm:reduced},
\begin{align*}
    f(c(t)) &= |\gamma_1|^2 + |\gamma_2|^2
             = \cos^4\!\left(\tfrac{t\pi}{2}\right) +
               \sin^4\!\left(\tfrac{t\pi}{2}\right).
\end{align*}
Thus, $f(c(0))=f(c(1))=1$ and $f\!\left(c\!\left(\tfrac12\right)\right)=\tfrac{1}{2}$. So,
\[
     \frac{1}{2}f(c(0))+\frac{1}{2}f(c(1))
     =1 > \frac{1}{2}
     = f\!\left(c\!\left(\tfrac{1}{2}\right)\right).
\]
\end{example}
We cannot guarantee that $f$ attains
its supremum on $\U(m)$, since $f$ may not be continuous. Furthermore, $f$ is not geodesically concave, so a Riemannian
gradient method may converge to a stationary point, which
need not be a global maximizer.

\section{Computing the Gradient and Hessian: Complex Case}\label{sec:complexGradient}

On the open dense subset of $\mathrm{U}(m)$ where $\phi_i \neq 0$ for all $i$, $f$ is smooth, so its Riemannian gradient $\mathrm{grad}_U f$ is well-defined; when $\rank(X)=m$---in particular for the CNP, where $X = I_m$---this is all of $\mathrm{U}(m)$. Where some $\phi_i = 0$---which can occur only when $\rank(X)<m$, since then some column of $U$ may lie in $\colspace(X)^\perp$---$f$ need not be smooth, as the $i$th term transitions to the constant $0$ by the convention of Lemma~\ref{lem:optD}. This locus does not affect our optimization in practice, however. For $X \neq 0$, each map $U \mapsto (U^*X)_i$ is real-analytic and not identically zero on $\U(m)$, so the locus has measure zero, and a random initialization avoids it with probability $1$ \cite{mityagin}. Although our solver may drive some $\phi_i$ toward $0$, the Cauchy--Schwarz inequality gives $|\gamma_i|^2/\phi_i \leq \|(U^*Y)_i\|_F^2$ while $\phi_i >0$, so the corresponding term of $f$ remains bounded and no singularity arises as $\phi_i \to 0$. 

Viewing $\mathrm{U}(m)$ as a Riemannian submanifold of the Euclidean space \linebreak ${\R^{2m^2}\cong \M_m(\C)}$, equipped with the inner product ${\langle A, B\rangle = \operatorname{Re}(\operatorname{tr}(A^*B))}$, the Riemannian gradient is obtained by projecting the Euclidean gradient $\nabla_U f$---such that $f$ is considered as a function in the ambient Euclidean space---onto the tangent space $T_U\mathrm{U}(m)$, ensuring that optimization iterates remain on $\mathrm{U}(m)$.

Thus, we proceed by computing $\nabla_U f$. The Euclidean gradient and Hessian are defined for arbitrary ambient directions $V\in\M_m(\C)$. For the Riemannian gradient and Hessian, however, we restrict $V \in T_U\mathrm{U}(m)$. Let $\epsilon\in \R$. We write $\delta(h) := \frac{d}{d\epsilon}h(U+\epsilon V)\big|_{\epsilon=0}$, and record for the $i$th term of~\eqref{eq:fU}:
\begin{equation}\label{eq:deltas}
    \delta(\gamma_i) = (V^*Y)_i(U^*X)_i^* + (U^*Y)_i(V^*X)_i^*,
    \qquad
    \delta(\phi_i) = 2\,\mathrm{Re}\bigl[(U^*X)_i(V^*X)_i^*\bigr],
\end{equation}
from which the quotient rule gives
\begin{equation}\label{eq:ddstar}
    \delta(d_i^\star) = \frac{\delta(\gamma_i)}{\phi_i} 
    - d_i^\star\frac{\delta(\phi_i)}{\phi_i},
\end{equation}
Applying the product rule to $f(U) = \sum_i |d_i^\star|^2\phi_i$, using $\delta(|d_i^\star|^2) = 2\,\mathrm{Re}(\bar{d_i^\star}\delta(d_i^\star))$, substituting~\eqref{eq:deltas} into~\eqref{eq:ddstar}, and summing over $i$:
\begin{align*}
    \delta\!\left(f\right)
    &= 2\sum_{i=1}^m \Big( \underbrace{\,\mathrm{Re}\bigl[\bar{d_i^\star}(U^*Y)_i(V^*X)_i^*\bigr]}_{\text{(A)}}
     + \underbrace{\,\mathrm{Re}\bigl[\bar{d_i^\star}(V^*Y)_i(U^*X)_i^*\bigr]}_{\text{(B)}}\\
     &~~~- \underbrace{|d_i^\star|^2\,\mathrm{Re}\bigl[(U^*X)_i(V^*X)_i^*\bigr]}_{\text{(C)}}\Big).
\end{align*}
Further applying the identity $\sum_i r_i s_i t_i^* = \mathrm{tr}(RST^*)$ for $R = \mathrm{diag}(r_i)$ and matrices $S$, $T$ with respective $i$th rows
$s_i$, $t_i$, together with cyclicity of the trace and
${\mathrm{Re}(\mathrm{tr}(S)) = \mathrm{Re}(\mathrm{tr}(S^*))}$:
\begin{align*}
&\text{(A):}\quad \sum_{i=1}^m \mathrm{Re}\bigl[\bar{d_i^\star}(U^*Y)_i(V^*X)_i^*\bigr]
= \mathrm{Re}\bigl[\mathrm{tr}\!\left((D^\star_U)^* U^*YX^*V\right)\bigr]\\
&\phantom{\text{(A):}\quad  \sum_{i=1}^m \mathrm{Re}\bigl[\bar{d_i^\star}(U^*Y)_i(V^*X)_i^*\bigr]}
= \mathrm{Re}\bigl[\mathrm{tr}\!\left((XY^*UD^\star_U)^*V\right)\bigr],\\
&\text{(B):}\quad \sum_{i=1}^m \mathrm{Re}\bigl[\bar{d_i^\star}(V^*Y)_i(U^*X)_i^*\bigr]
= \mathrm{Re}\bigl[\mathrm{tr}\!\left((D^\star_U)^* V^*YX^*U\right)\bigr]\\
&\phantom{\text{(B):}\quad \sum_{i=1}^m \mathrm{Re}\bigl[\bar{d_i^\star}(V^*Y)_i(U^*X)_i^*\bigr]}
= \mathrm{Re}\bigl[\mathrm{tr}\!\left((YX^*U(D^\star_U)^*)^*V\right)\bigr],\\
&\text{(C):}\quad \sum_{i=1}^m |d_i^\star|^2\,\mathrm{Re}\bigl[(U^*X)_i(V^*X)_i^*\bigr]
= \mathrm{Re}\bigl[\mathrm{tr}\!\left(|D^\star_U|^2 U^*XX^*V\right)\bigr]\\
&\phantom{\text{(C):}\quad \sum_{i=1}^m |d_i^\star|^2\,\mathrm{Re}\bigl[(U^*X)_i(V^*X)_i^*\bigr]}
= \mathrm{Re}\bigl[\mathrm{tr}\!\left((XX^*U|D^\star_U|^2)^*V\right)\bigr].
\end{align*}
Combining these terms yields
\begin{align*}
    \delta(f) &= 2\,\mathrm{Re}\!\left[\mathrm{tr}\!\left(\bigl(XY^*UD^\star_U
+ YX^*U(D^\star_U)^* - XX^*U|D^\star_U|^2\bigr)^*V\right)\right]\\
&= \mathrm{Re}\!\left(\mathrm{tr}\!\left((\nabla_U f)^* V\right)\right),
\end{align*}
from which we identify
\begin{equation}\label{eq:euclidean_grad}
    \nabla_U f = 2\bigl(XY^*UD^\star_U + YX^*U(D^\star_U)^* - XX^*U|D^\star_U|^2\bigr).
\end{equation}
We now project $\nabla_U f$ onto the tangent space $T_U\mathrm{U}(m)$ 
to obtain the Riemannian gradient. Since the Lie algebra of $\mathrm{U}(m)$ 
is the space of skew-Hermitian matrices, \cite[Lemma~3.2]{bahar} gives
\[
T_U\mathrm{U}(m) = \{US : S^* = -S\}.
\]
Projecting $\nabla_U f$ onto $T_U\mathrm{U}(m)$ via the skew-Hermitian 
part operator\linebreak ${\operatorname{skew}(Z) = \frac{1}{2}(Z - Z^*)}$ yields 
the Riemannian gradient
\[
\operatorname{grad}_U f = U\operatorname{skew}(U^*\nabla_U f).
\]

We next compute the Riemannian Hessian. Note that determining the exact Hessian is not strictly necessary even for a second-order Riemannian manifold optimizer like \texttt{trustregions} within Manopt, which approximates the Hessian well using finite differences on the gradient. However, as Boumal states in \cite[Section 6.4.1]{boumal}, if it is practical to compute the Hessian, the resulting ``enhanced accuracy of the model is a strong incentive to do so.'' Thus, for improved computational efficiency and completeness, we determine the Hessian here. 

To compute the Riemannian Hessian $\operatorname{Hess}_U f[V]$, we follow a 
similar process (outlined in \cite[Corollary~5.16]{boumal}) to that of the 
Riemannian gradient; we evaluate ${\delta(\nabla_U\, f)}$, then 
project the result back onto $T_U\mathrm{U}(m)$. Boumal carries out exactly 
this computation for a general smooth function in 
\cite[Equation~(7.39)]{boumal} over the real orthogonal group $\mathrm{O}(n)$. The unitary 
case is completely analogous and gives
\begin{equation}\label{eq:genHessian}
    \operatorname{Hess}_U f[V] 
= U\operatorname{skew}\!\left(U^* H_U f[V] 
- U^*V\operatorname{herm}(U^*\nabla_U f)\right),
\end{equation}
where $\operatorname{herm}(Z) = \tfrac{1}{2}(Z + Z^*)$, and $\delta(\nabla_U f)=H_U f[V]$ 
is the Euclidean Hessian. It remains to compute $H_U f[V]$.

Consider the perturbation of~\eqref{eq:euclidean_grad} in the direction 
$V\in T_U\U(m)$,
\begin{align*}
\nabla_{U+\epsilon V} f &= 2\bigl(XY^*(U+\epsilon V)D^\star_{U+\epsilon V} 
+ YX^*(U+\epsilon V)(D^\star_{U+\epsilon V})^* \\
&~~~- XX^*(U+\epsilon V)|D^\star_{U+\epsilon V}|^2\bigr).
\end{align*}
Applying the product rule to each of the three terms and 
evaluating at $\epsilon = 0$ gives
\begin{equation}\label{eq:hess}
\begin{aligned}
    H_U f[V] = 2\big(
    &XY^*VD^\star_U + XY^*U\,\delta(D^\star_U) \\
    &+ YX^*V(D^\star_U)^* + YX^*U\,\delta(D^\star_U)^* \\
    &- XX^*V|D^\star_U|^2 - XX^*U\,\delta(|D^\star_U|^2)\big),
\end{aligned}
\end{equation}
where $\delta(D^\star_U)$ and $\delta(|D^\star_U|^2)$ are computed as we did for the gradient. Substituting \eqref{eq:hess} into~\eqref{eq:genHessian} yields the desired result.
\section{Numerical Experiments: Complex Case}\label{sec:complexNum}
Before we test the optimization capabilities of the above results on the NPP, we first numerically check the Riemannian gradient and Hessian. We employ the tools \texttt{checkgradient} and \texttt{checkhessian} from Manopt (version 8.0) to perform these checks. The algorithms these tools follow are developed in detail in \cite[Sections~4.8, 6.8]{boumal}, but we provide some brief intuition here.
\subsection{Checking the Gradient and Hessian}\label{ssec:complexCheck}~\\[8pt]
\indent Beginning with the gradient, we define the approximation error
\[
    E(t) \coloneqq \left|f(R_U(tV)) - f(U) 
    - t\,\langle \operatorname{grad}_U f, V\rangle\right|,
\]
where $t\in \R$, $V \in T_U\mathrm{U}(m)$ is a tangent direction, and $R_U$ denotes the retraction from $T_U\mathrm{U}(m)$ onto $\mathrm{U}(m)$. If the log-log plot of $E(t)$ grows linearly with a slope of two or more over several orders of magnitude of $t$, then $E(t) = O(t^k)$ with $k\geq2$, so the first-order term $t\langle\operatorname{grad}_U f, V\rangle$ correctly captures the linear variation of $f$, and the gradient is corroborated. 
\begin{figure}[H]
    \centering
    \includegraphics[width=0.49\textwidth]{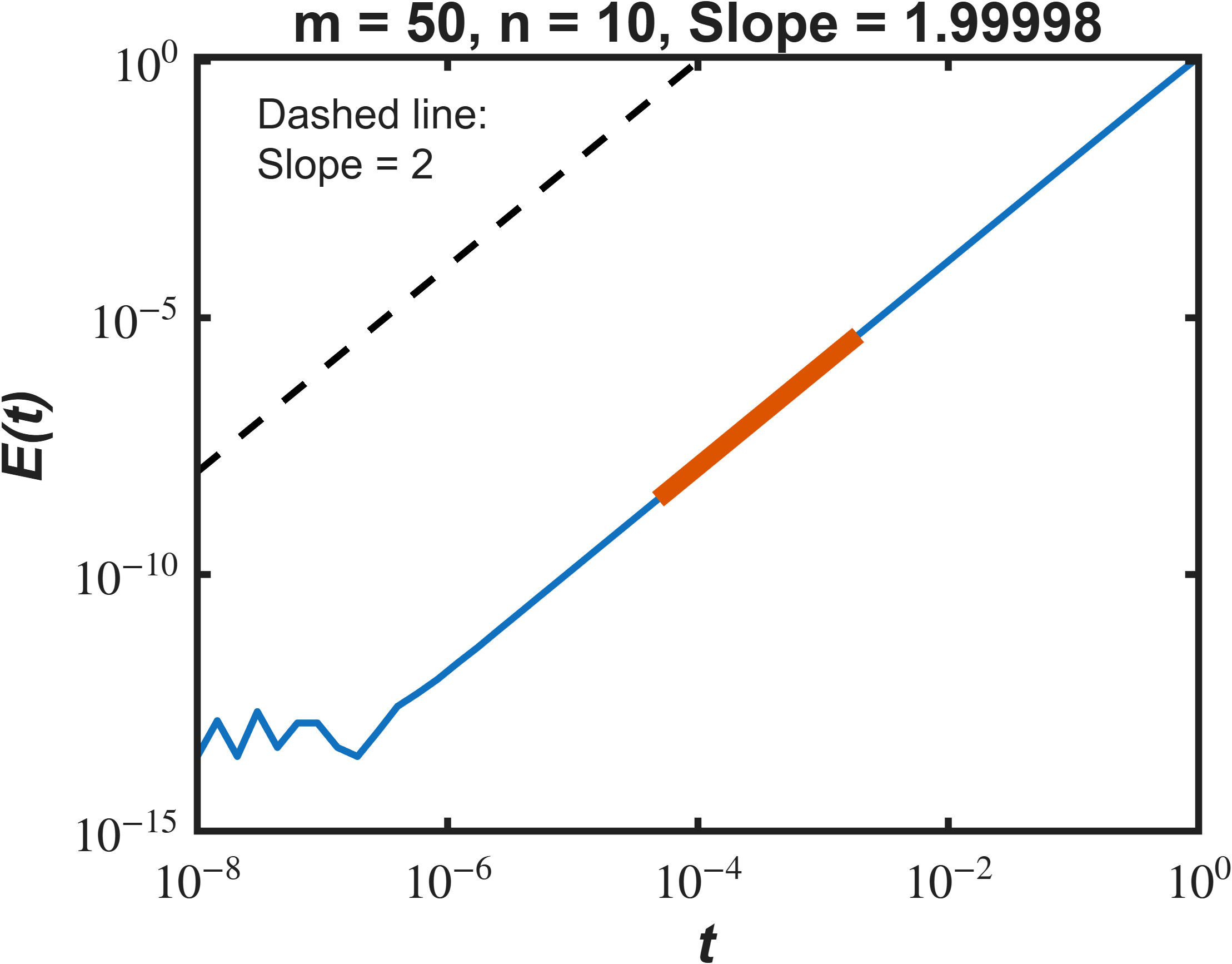}
    \hfill
    \includegraphics[width=0.49\textwidth]{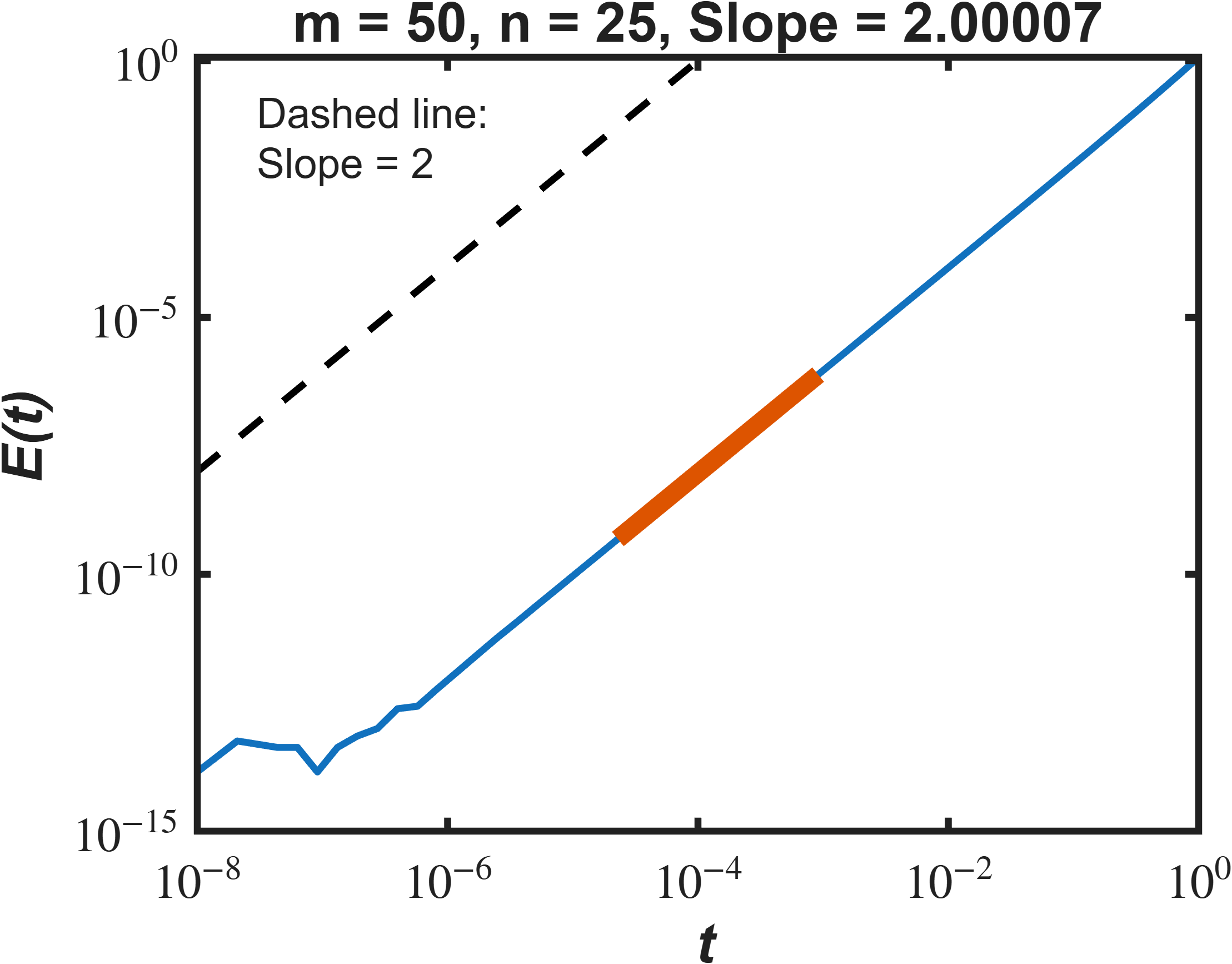}\\[10pt]
    \includegraphics[width=0.49\textwidth]{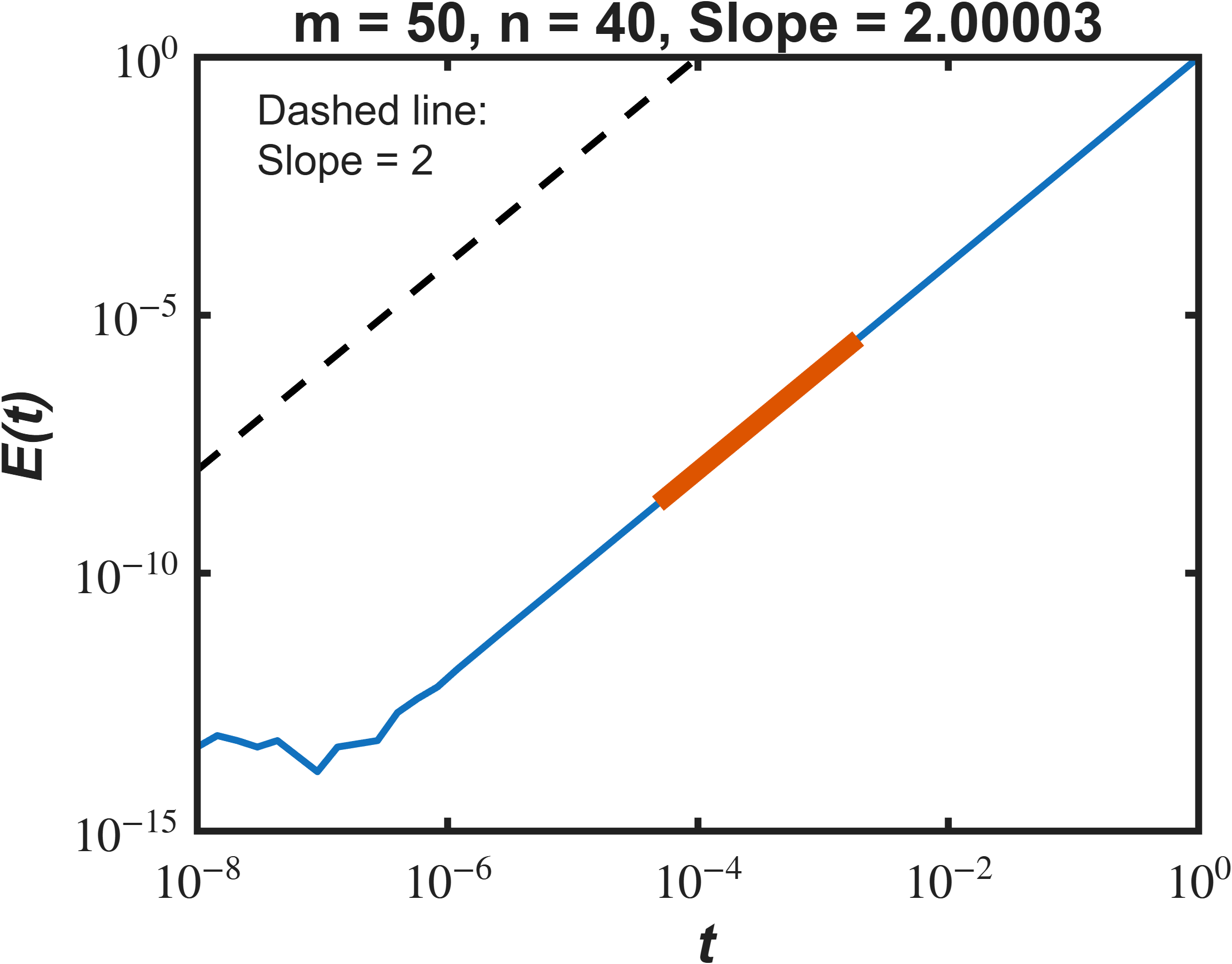}
    \hfill
    \includegraphics[width=0.49\textwidth]{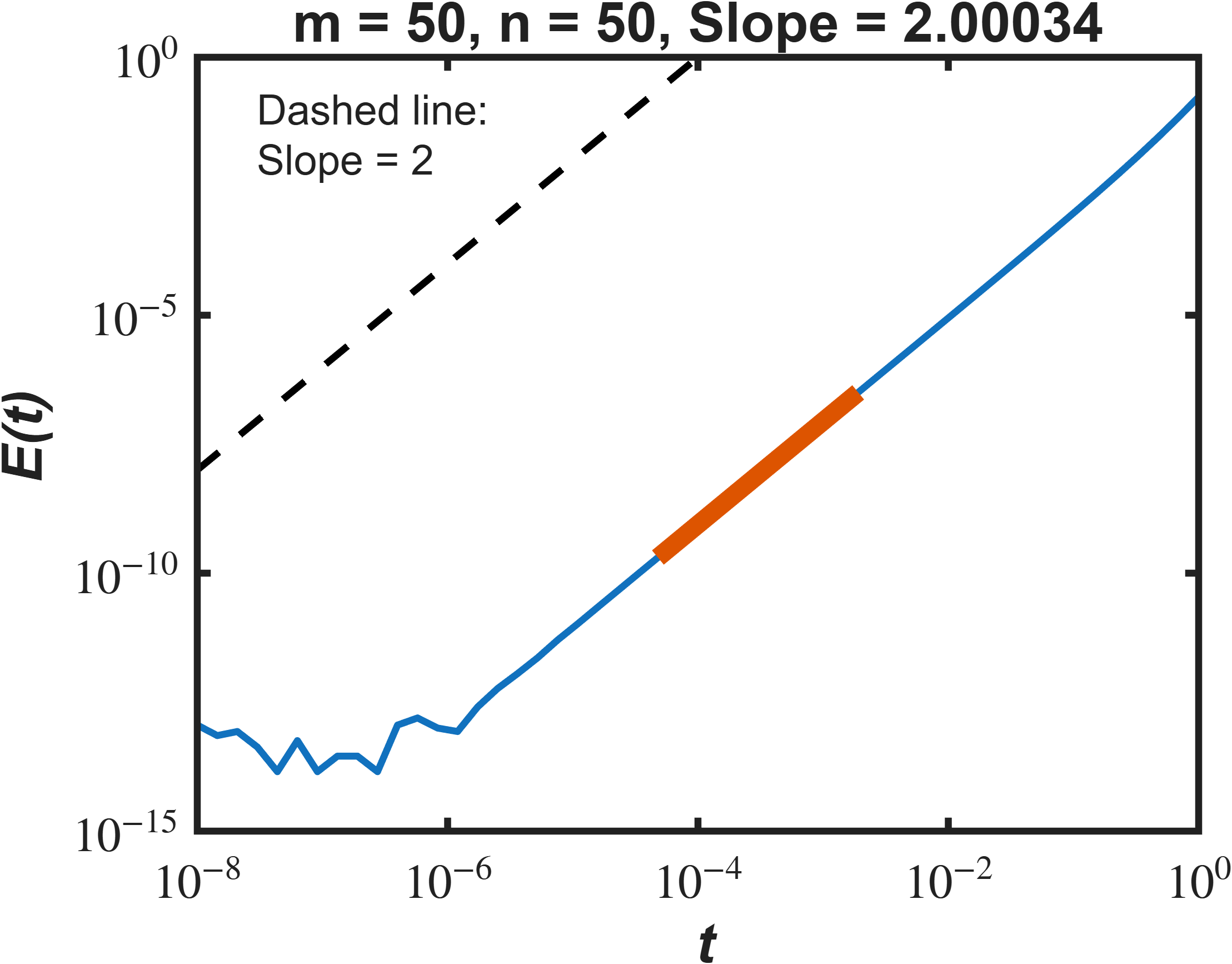}
    
   \caption{We run \texttt{checkgradient} across four rank regimes to confirm the result is not an artifact of a single random instance. This tool outputs a log-log plot of $E(t)$ (solid blue). The dashed line of slope-two is a visual reference. The thick orange segment overlaid on $E(t)$ marks the interval over which the average slope (noted in each title) is computed. In each case the slope is $\sim2$.}\label{fig:gradCheck}
\end{figure}

The process for checking the Riemannian Hessian is similar to that of the gradient. However, we first verify $\operatorname{Hess}_U f:T_U \mathrm{U}(m) \to T_U \mathrm{U}(m)$ is a well-defined map, which acts linearly on the tangent space, and is self-adjoint with respect to the inner product---as noted in \cite[Definition 5.14, Proposition 5.15]{boumal}. The tool \texttt{checkhessian} evaluates these properties by measuring the residuals
\begin{align*}
r_{\mathrm{tan}}
    &\coloneqq
    \left\|
        \operatorname{Hess}_U f[V] - P_{T_U\mathrm{U}(m)}\bigl(\operatorname{Hess}_U f[V]\bigr)
    \right\|_F,\\[4pt]
r_{\mathrm{lin}}
    &\coloneqq
    \left\|
        \operatorname{Hess}_U f[aV+bW]
        -
        a\,\operatorname{Hess}_U f[V]
        -
        b\,\operatorname{Hess}_U f[W]
    \right\|_F,\\[4pt]
r_{\mathrm{sym}}
    &\coloneqq
    \left|
        \left\langle
            V,\operatorname{Hess}_U f[W]
        \right\rangle
        -
        \left\langle
            \operatorname{Hess}_U f[V],W
        \right\rangle
    \right|,
\end{align*}
where $P_{T_U\mathrm{U}(m)}$ denotes the orthogonal projection onto the tangent space, \linebreak
${V,W\in T_U\mathrm{U}(m)}$ are random tangent vectors, and $a,b\in\R$ are random scalars. In each case, the residual should vanish up to machine precision. Let $R_{U}$ be a second-order retraction. We further define
\[
    E_H(t) \coloneqq | f(R_U(tV)) - f(U) 
    - t\,\langle \operatorname{grad}_U f, V\rangle 
    - \tfrac{t^2}{2}\,\langle \operatorname{Hess}_U f[V], V\rangle |.
\]
If $E_H(t)=O(t^k)$ with $k\geq 3$, then the second-order term $\tfrac{t^2}{2}\,\langle \operatorname{Hess}_U f[V], V\rangle$ correctly captures the quadratic variation of $f$, and the Riemannian Hessian is correct. Thus, we inspect whether the log-log plot of $E_H (t)$ grows linearly with a slope of three or more.

\begin{figure}[H]
    \centering
    \includegraphics[width=0.49\textwidth]{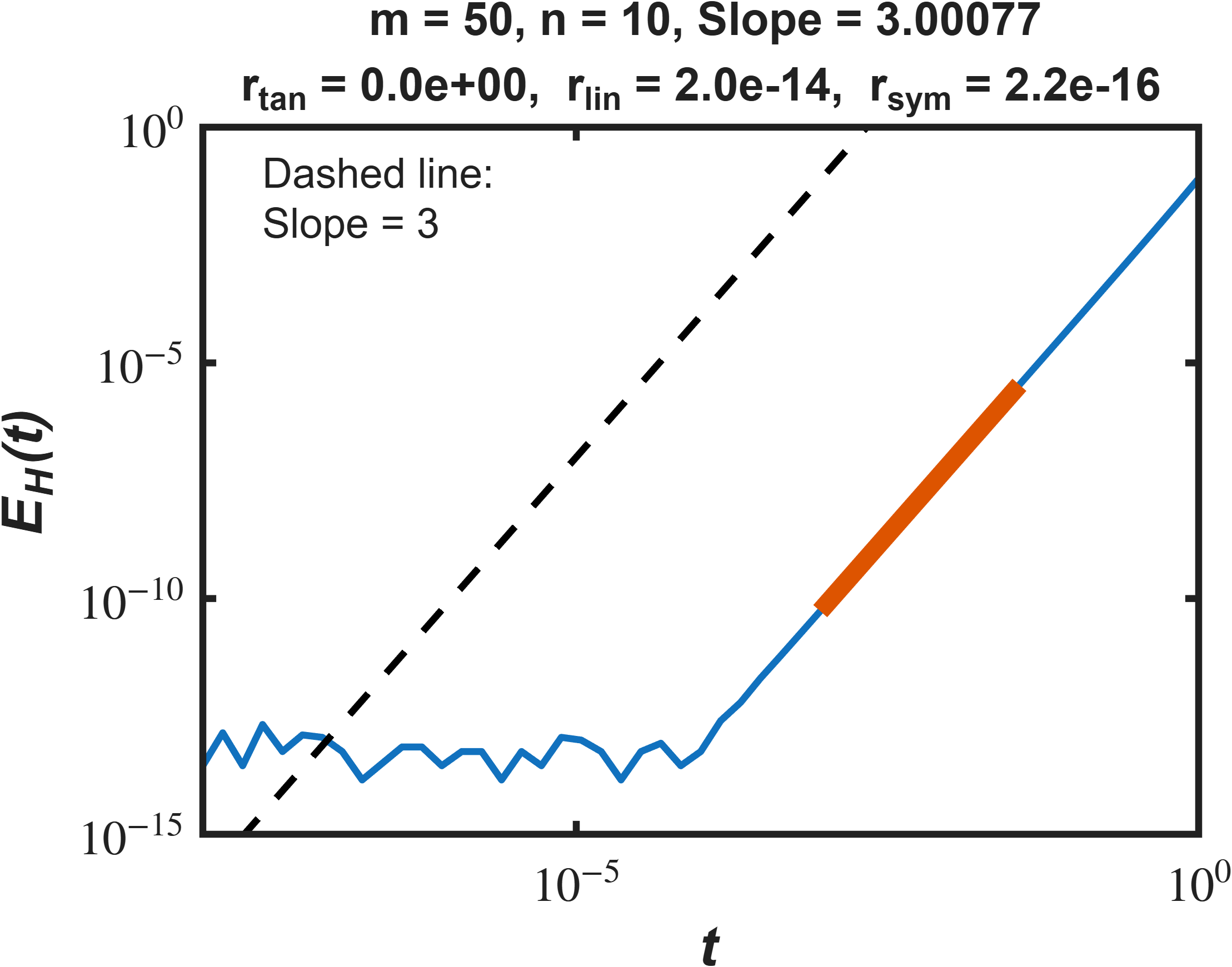}
    \hfill
    \includegraphics[width=0.49\textwidth]{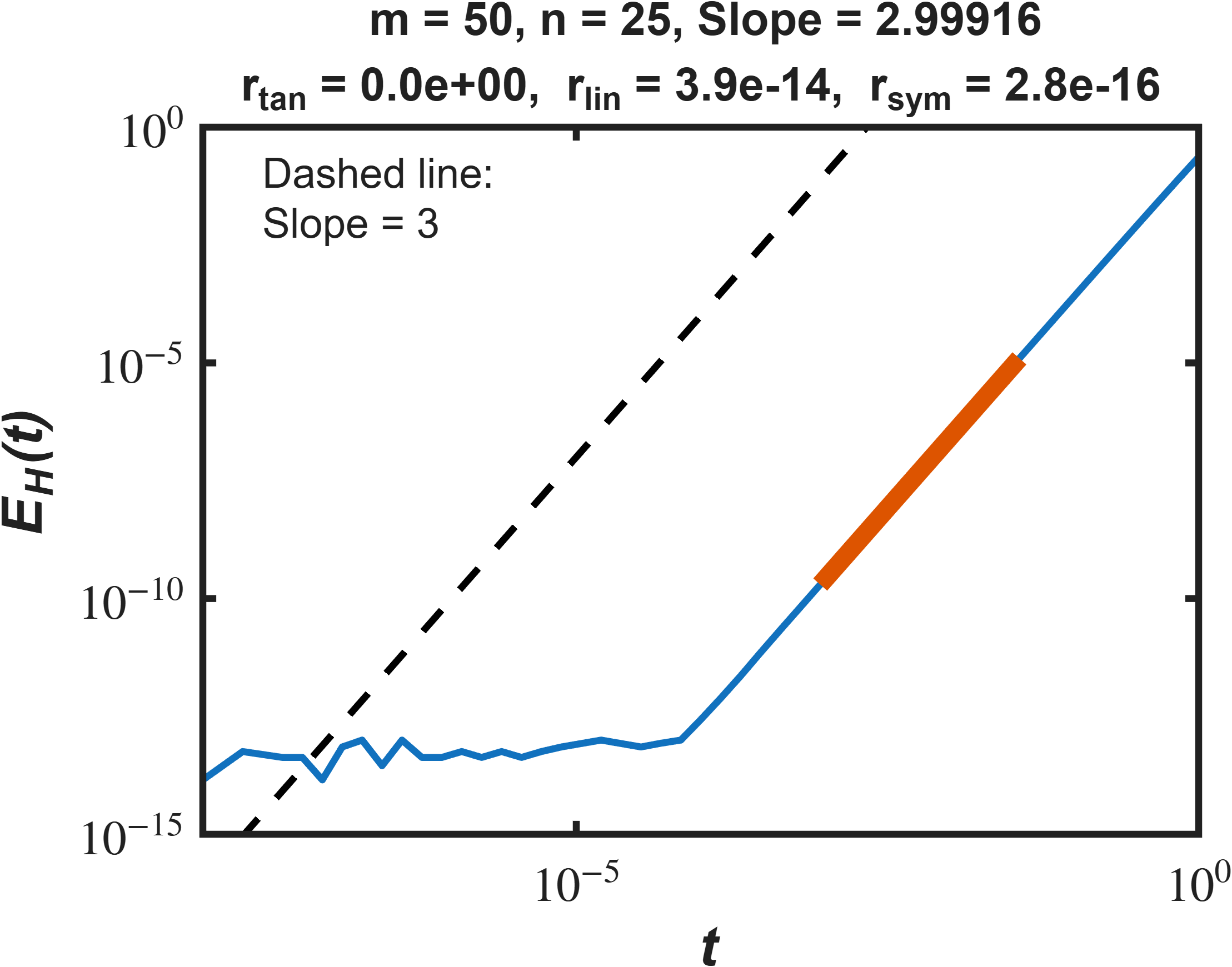}\\[10pt]
    \includegraphics[width=0.49\textwidth]{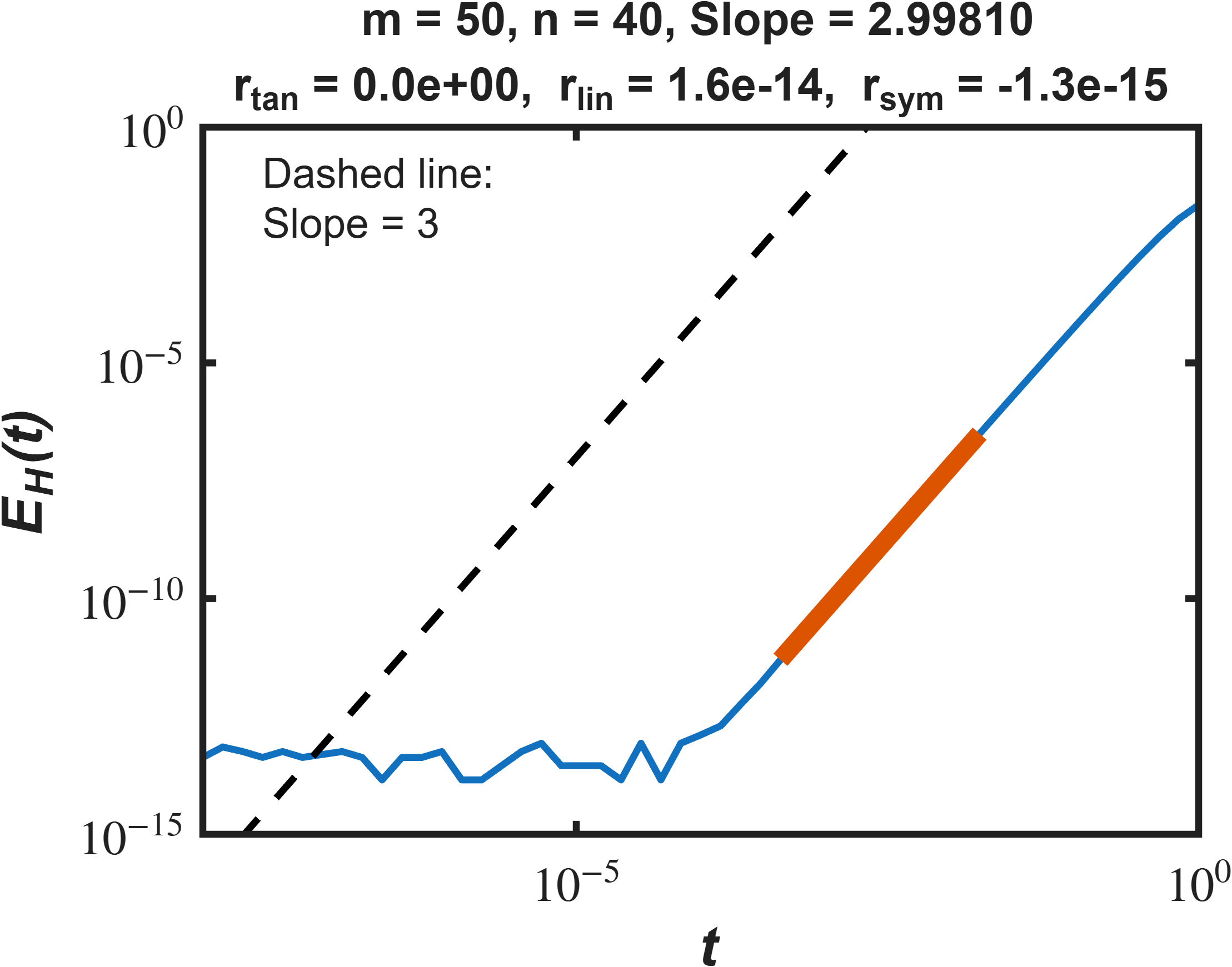}
    \hfill
    \includegraphics[width=0.49\textwidth]{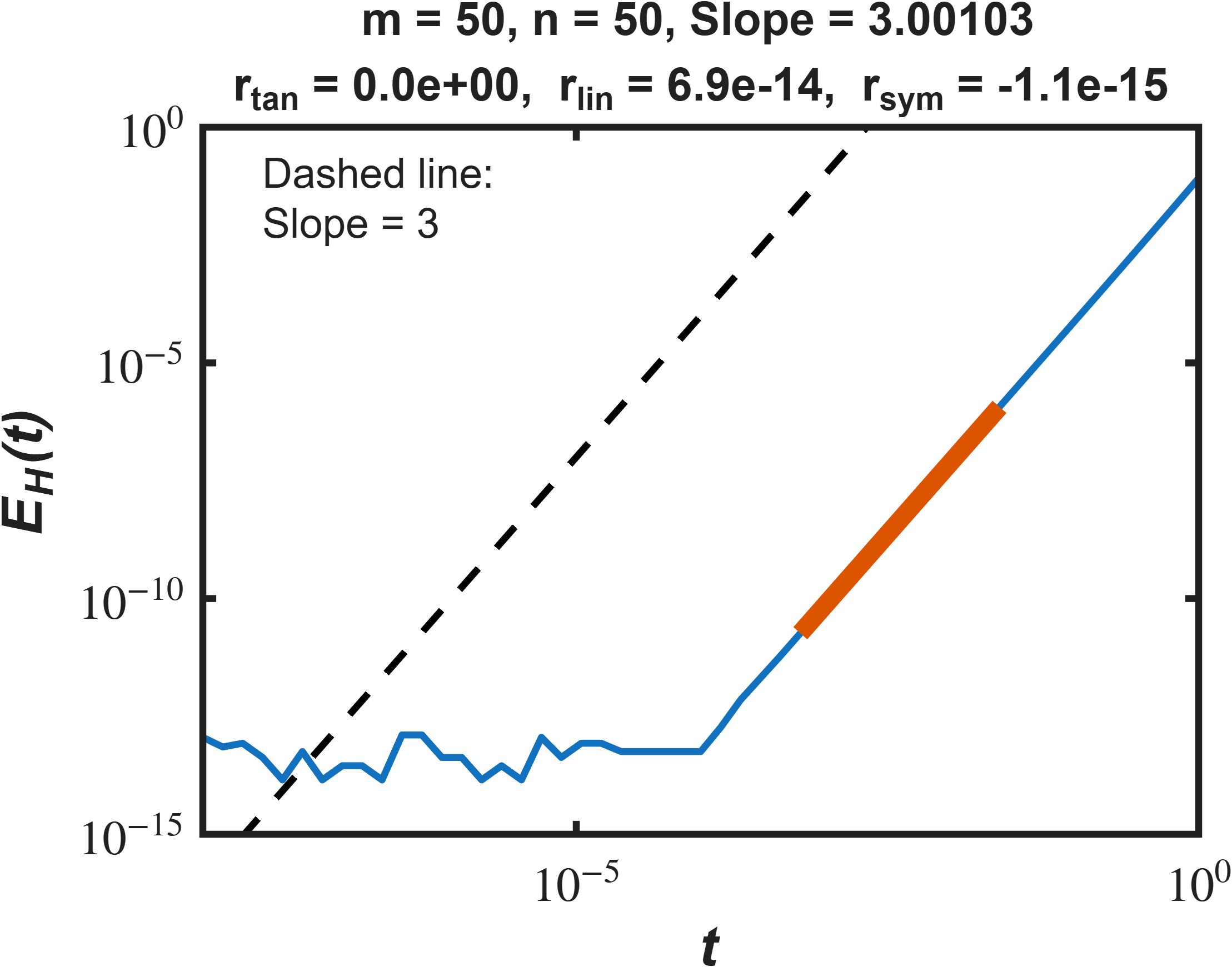}
    
    \caption{In a similar setup to Figure \ref{fig:gradCheck}, \texttt{checkhessian} outputs the above. We indeed confirm in each case the slope is $\sim3$, and $r_{\mathrm{tan}}, r_{\mathrm{lin}}, r_{\mathrm{sym}}$ all vanish up to machine precision.}\label{fig:hessCheck}
\end{figure}

\subsection{Optimization Performance on Random Matrices}\label{ssec:complexPerform}~\\[8pt]
\indent Having checked the Riemannian gradient and Hessian, we now evaluate how effective our findings are at optimizing \eqref{eq:Uminimize}. We employ the Riemannian trust-region solver first introduced in \cite{rtr}, outlined in \cite[Algorithm~6.3]{boumal}, and available as \texttt{trustregions} within Manopt. This algorithm fits a quadratic model to $f$ at the iterate $U_k \in \mathrm{U}(m)$ using the gradient and Hessian, and then optimizes this model over a disk (the \textit{trust region}) in the 
tangent space $T_{U_k}\mathrm{U}(m)$. The optimal step is retracted back onto $\mathrm{U}(m)$ to produce a tentative iterate $\tilde{U}_k$, whose actual cost $f(\tilde{U}_k)$ is compared against the value predicted by the model. If the model's prediction is sufficiently accurate, the step is accepted ($U_{k+1} = \tilde{U}_k$); otherwise it is rejected ($U_{k+1} = U_k$). The trust-region radius is then reduced, maintained, or enlarged according to the accuracy of the model, and the process repeats. In practice, \texttt{trustregions} terminates once the Riemannian gradient norm $\|\operatorname{grad}_U f\|_F$ falls below a prescribed tolerance (we use the Manopt default of $10^{-6}$), at which point the returned iterate approximately satisfies the first-order criticality condition in \cite[Definition 4.4]{boumal}.

We evaluate the solver's performance over given $X,Y\in \M_{m\times n}(\C)$ with \linebreak
${m \in \{10, 20, 50\}}$, $n \in \{0.2m,\, 0.5m,\,0.8m,\, m\}$, and either complex random Gaussian entries (MATLAB's \texttt{randn(m,n) + 1i*randn(m,n)}) or complex random uniform entries (MATLAB's \texttt{rand(m,n) + 1i*rand(m,n)}). For each instance, we run three random starts on $\U(m)$. The author also independently tested an initialization built from the Schur decomposition of $YX^{\dag}$ (where $^\dag$ denotes the Moore--Penrose pseudoinverse), intended as a least-squares guess at $U^\star$; however, such starts produced degeneracies---most notably when $YX^{\dag}$ is nilpotent, where the start lands on a global minimum with $f(U_0) = 0$ and the solver terminates immediately---while offering no advantage in final residuals or timing. All reported results therefore use random initialization, for which we encountered no such degeneracy. 
Note that for brevity, we only include the residual plots when $n \in \{0.5m, m\}$ in this paper, but all of our plots---including plots of the gradient norm---are available for download at \url{https://github.com/kbierly/NPP}. All experiments (as in future sections) were performed on an Intel Core Ultra 9 285H CPU 2.90 GHz with 16 GB of RAM, running MATLAB R2026a (LAPACK 3.11.0 and MKL 2025.0.1) on a Windows system. All reported times are wall-clock times.
\begin{figure}[H]
    \centering
    \begin{minipage}[t]{0.45\textwidth}\vspace*{0pt}
    \includegraphics[width=\textwidth]{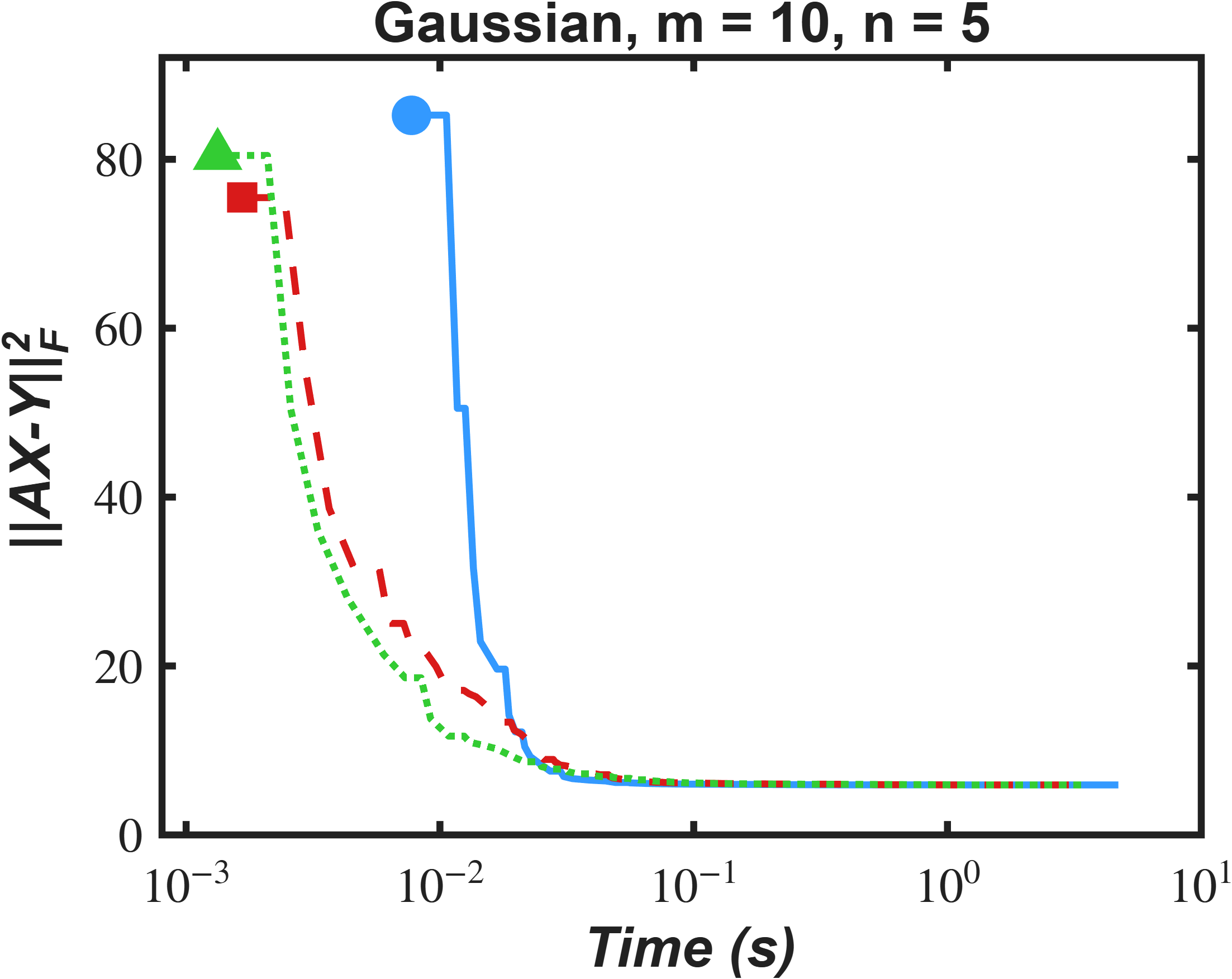}
\end{minipage}\hfill
\begin{minipage}[t]{0.45\textwidth}\vspace*{0pt}
    \includegraphics[width=\textwidth]{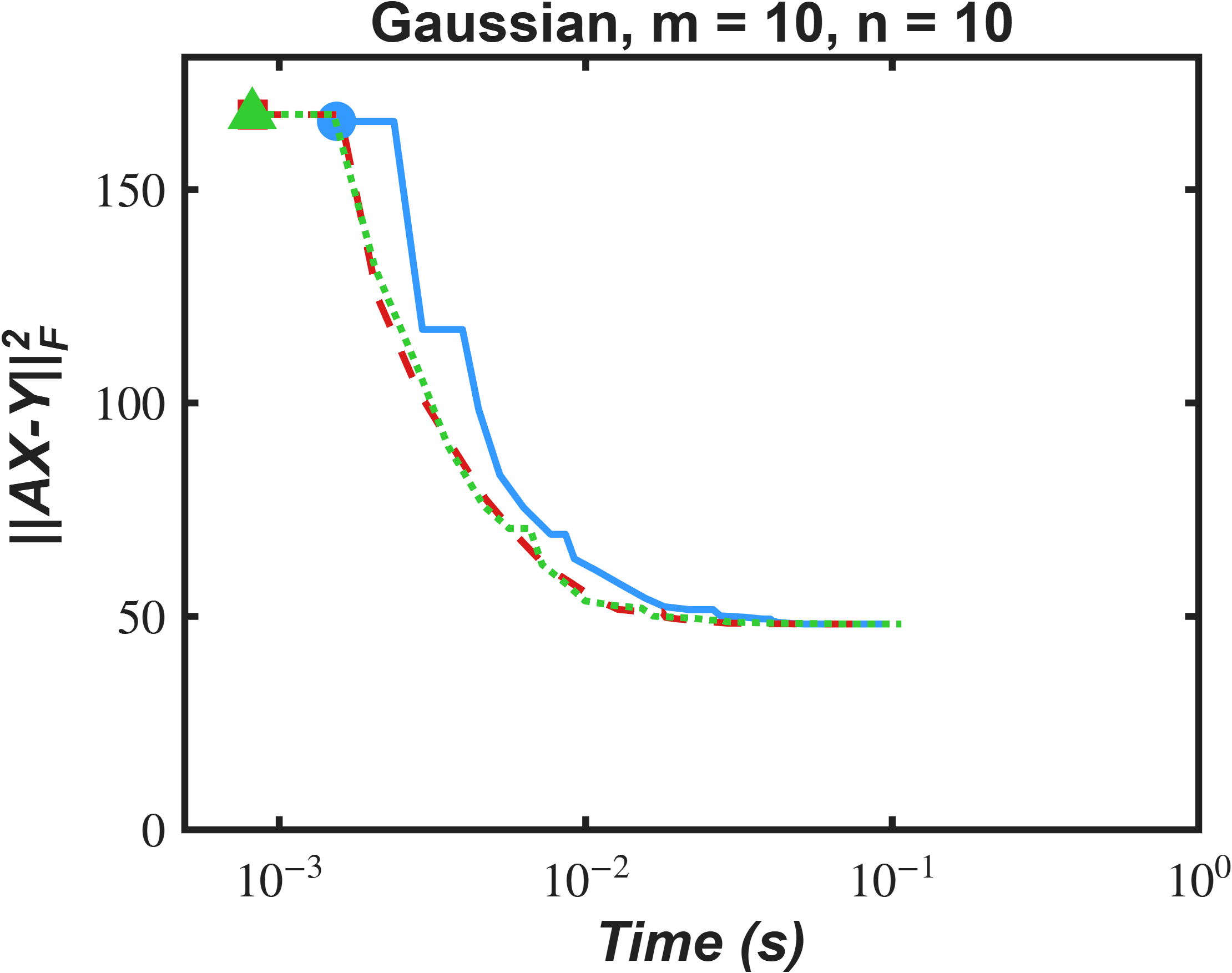}
\end{minipage}\hfill
\begin{minipage}[t]{0.10\textwidth}\vspace*{6.5pt}
    \includegraphics[width=\textwidth]{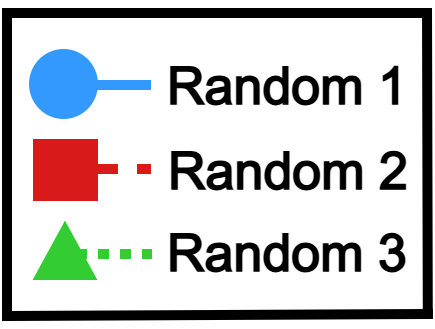}
\end{minipage}\end{figure}
\newpage
\begin{figure}[H]
    \centering
\begin{minipage}[t]{0.45\textwidth}\vspace*{0pt}
    \includegraphics[width=\textwidth]{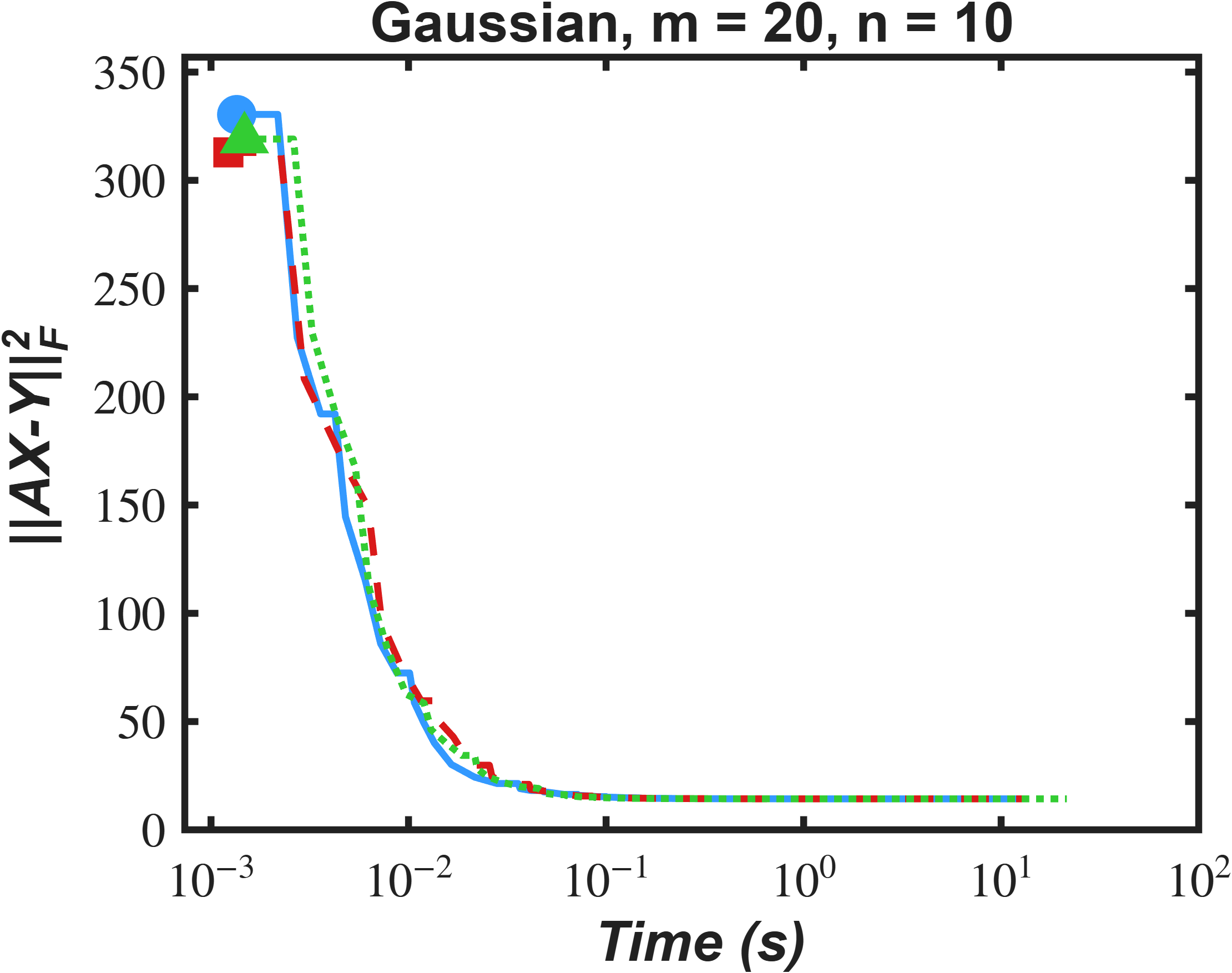}
\end{minipage}\hfill
\begin{minipage}[t]{0.45\textwidth}\vspace*{0pt}
    \includegraphics[width=\textwidth]{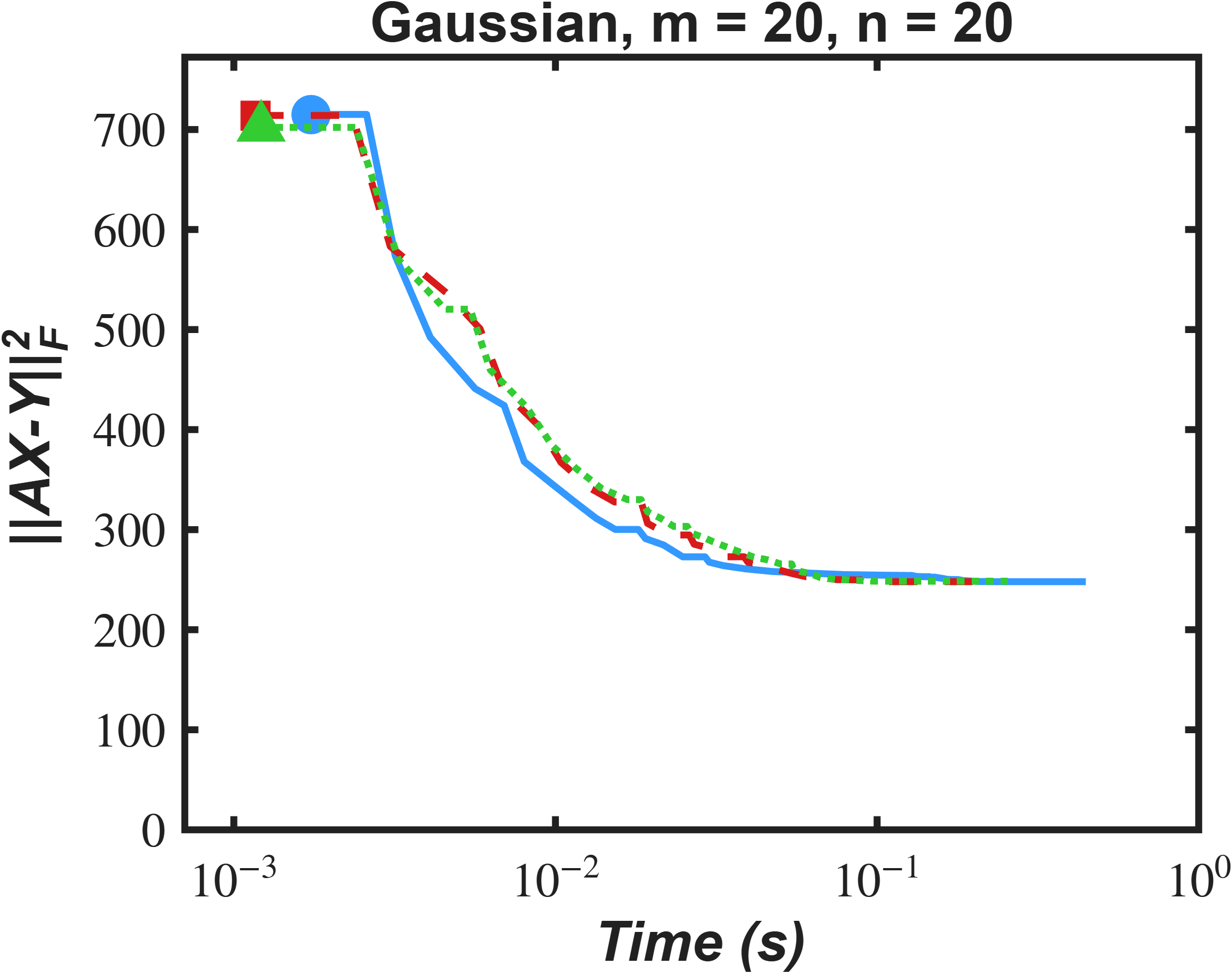}
\end{minipage}\hfill
\begin{minipage}[t]{0.10\textwidth}\vspace*{0pt}
    \phantom{\includegraphics[width=\textwidth]{convgKey.png}}
\end{minipage}\\[5pt]
\begin{minipage}[t]{0.45\textwidth}\vspace*{0pt}
    \includegraphics[width=\textwidth]{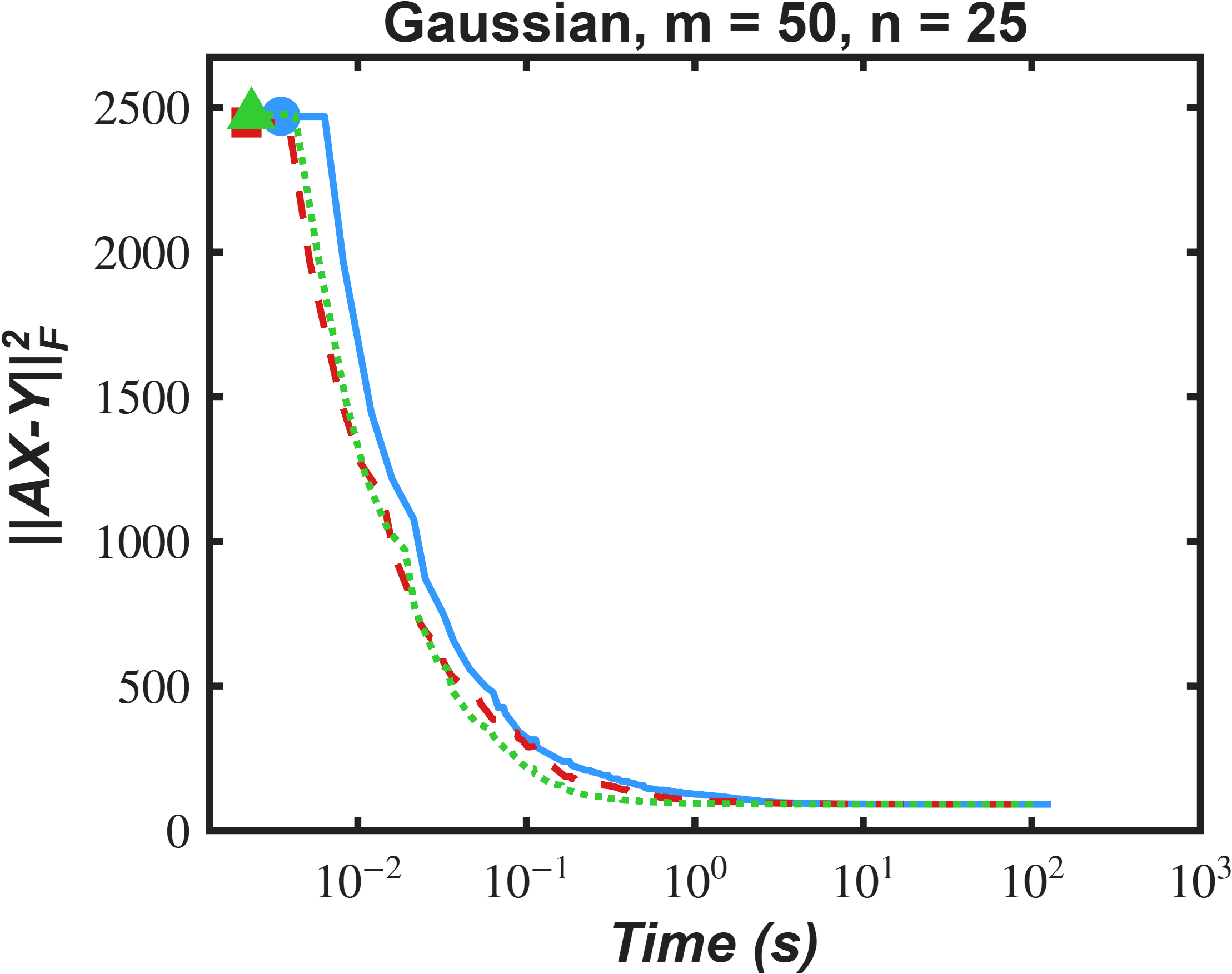}
\end{minipage}\hfill
\begin{minipage}[t]{0.45\textwidth}\vspace*{0pt}
    \includegraphics[width=\textwidth]{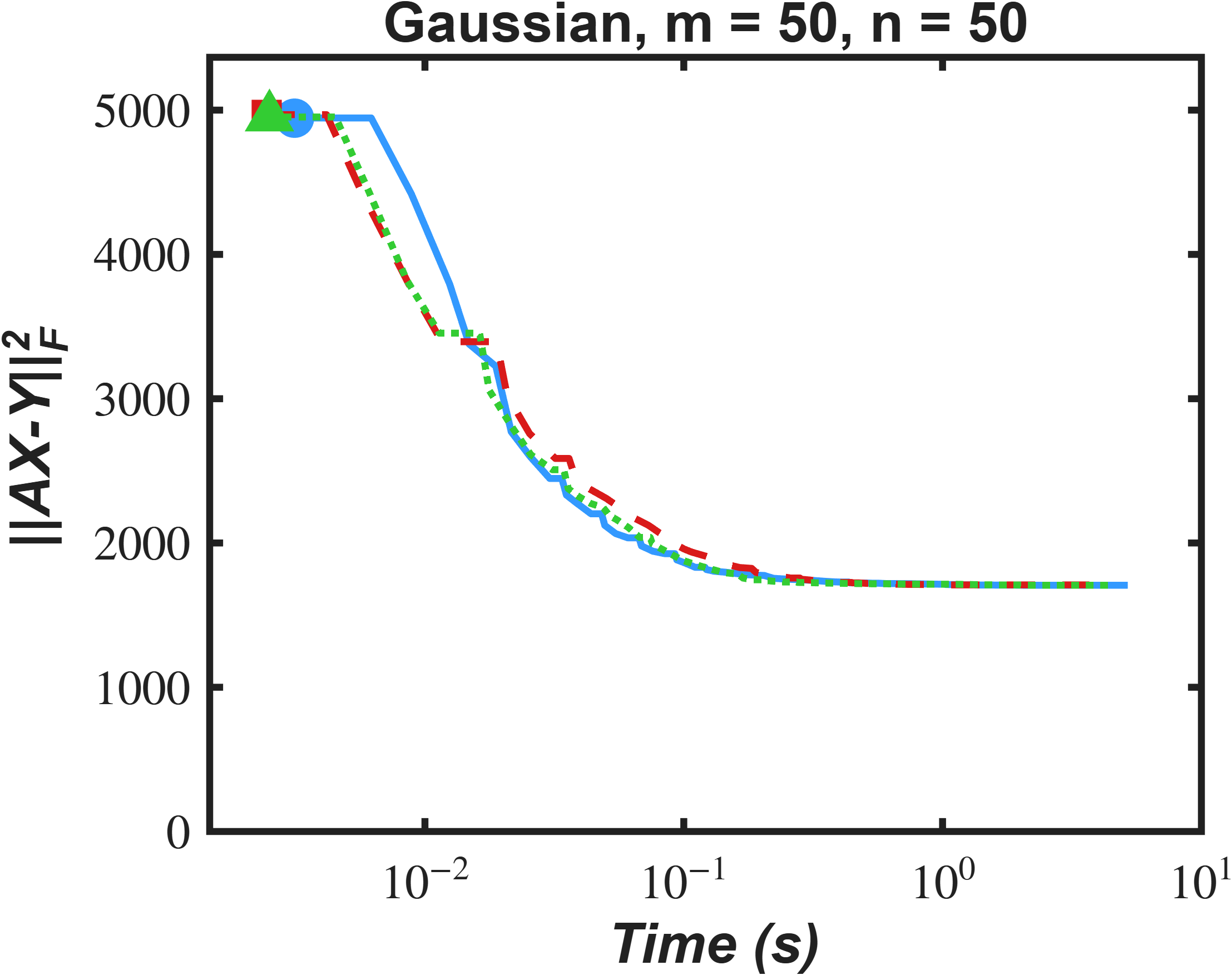}
\end{minipage}\hfill
\begin{minipage}[t]{0.10\textwidth}\vspace*{0pt}
    \phantom{\includegraphics[width=\textwidth]{convgKey.png}}
\end{minipage}
    \caption{Residual $\|AX-Y\|_F^2$ (with $A \in \mathcal{N}$) vs.~time to solve, over three random starts with Gaussian inputs.}\label{fig:convgGaus}
\end{figure}

\begin{figure}[H]
    \centering
    \begin{minipage}[t]{0.45\textwidth}\vspace*{0pt}
    \includegraphics[width=\textwidth]{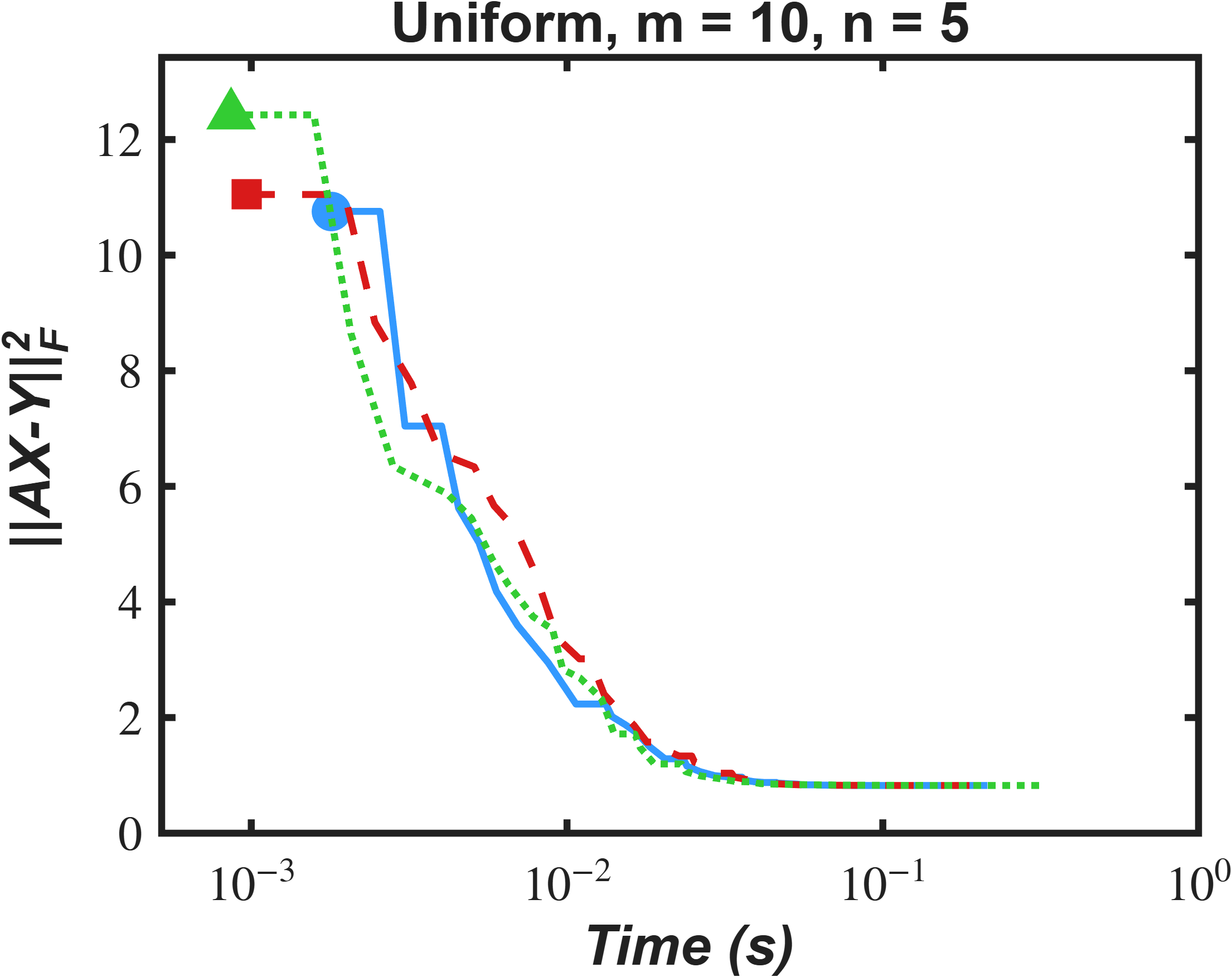}
\end{minipage}\hfill
\begin{minipage}[t]{0.45\textwidth}\vspace*{0pt}
    \includegraphics[width=\textwidth]{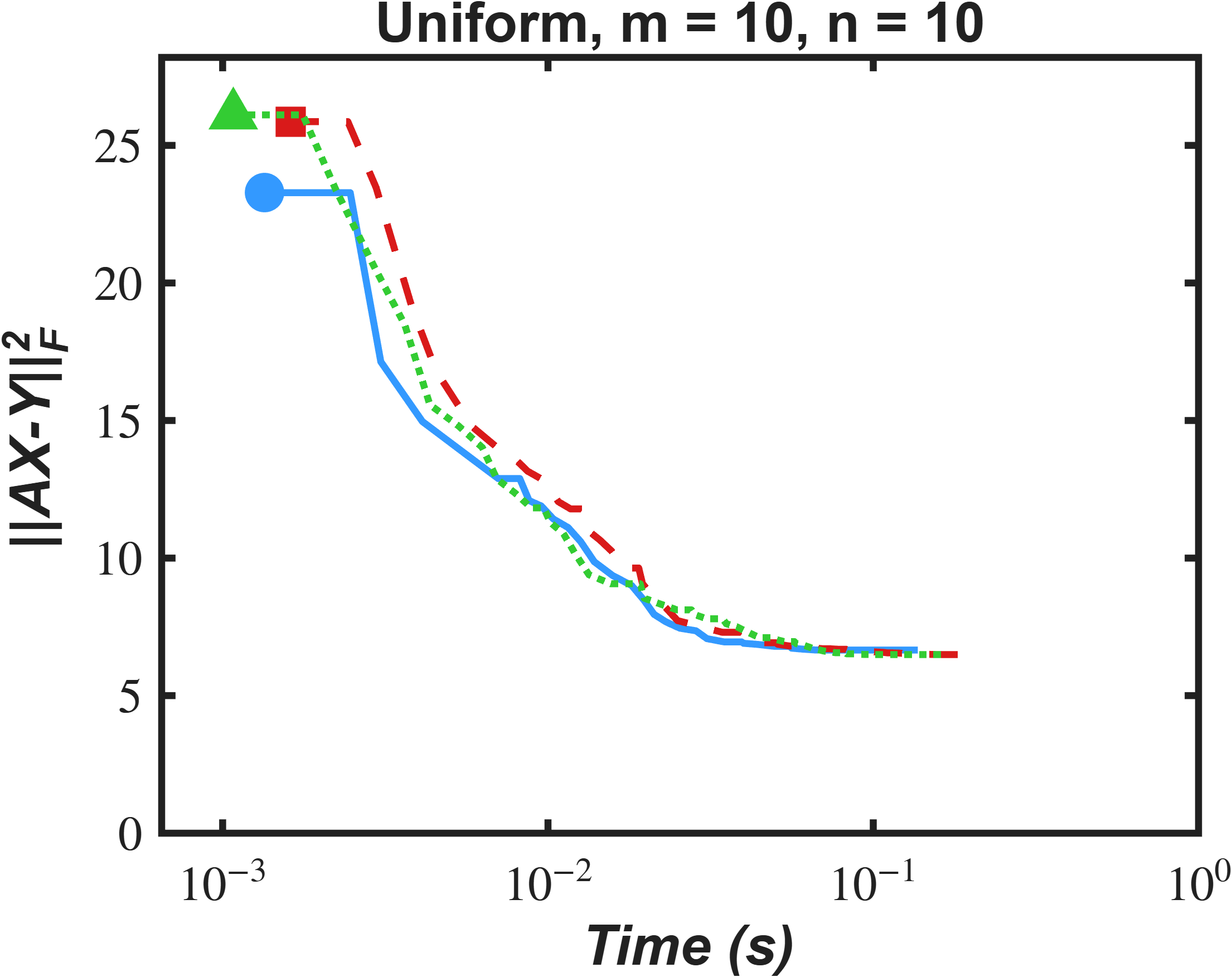}
\end{minipage}\hfill
\begin{minipage}[t]{0.10\textwidth}\vspace*{6.5pt}
    \includegraphics[width=\textwidth]{convgKey.png}
\end{minipage}
    \\[5pt]
\begin{minipage}[t]{0.45\textwidth}\vspace*{0pt}
    \includegraphics[width=\textwidth]{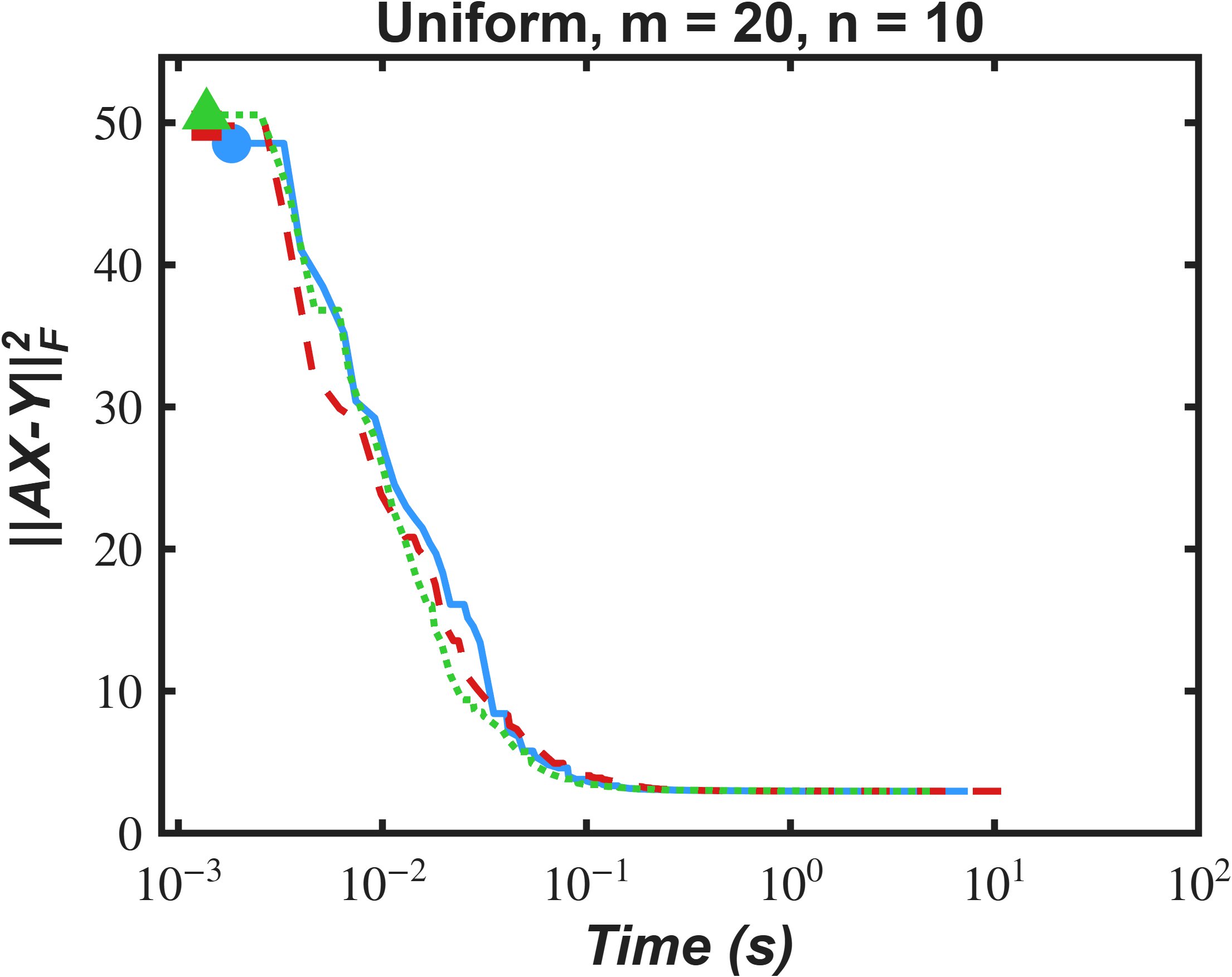}
\end{minipage}\hfill
\begin{minipage}[t]{0.45\textwidth}\vspace*{0pt}
    \includegraphics[width=\textwidth]{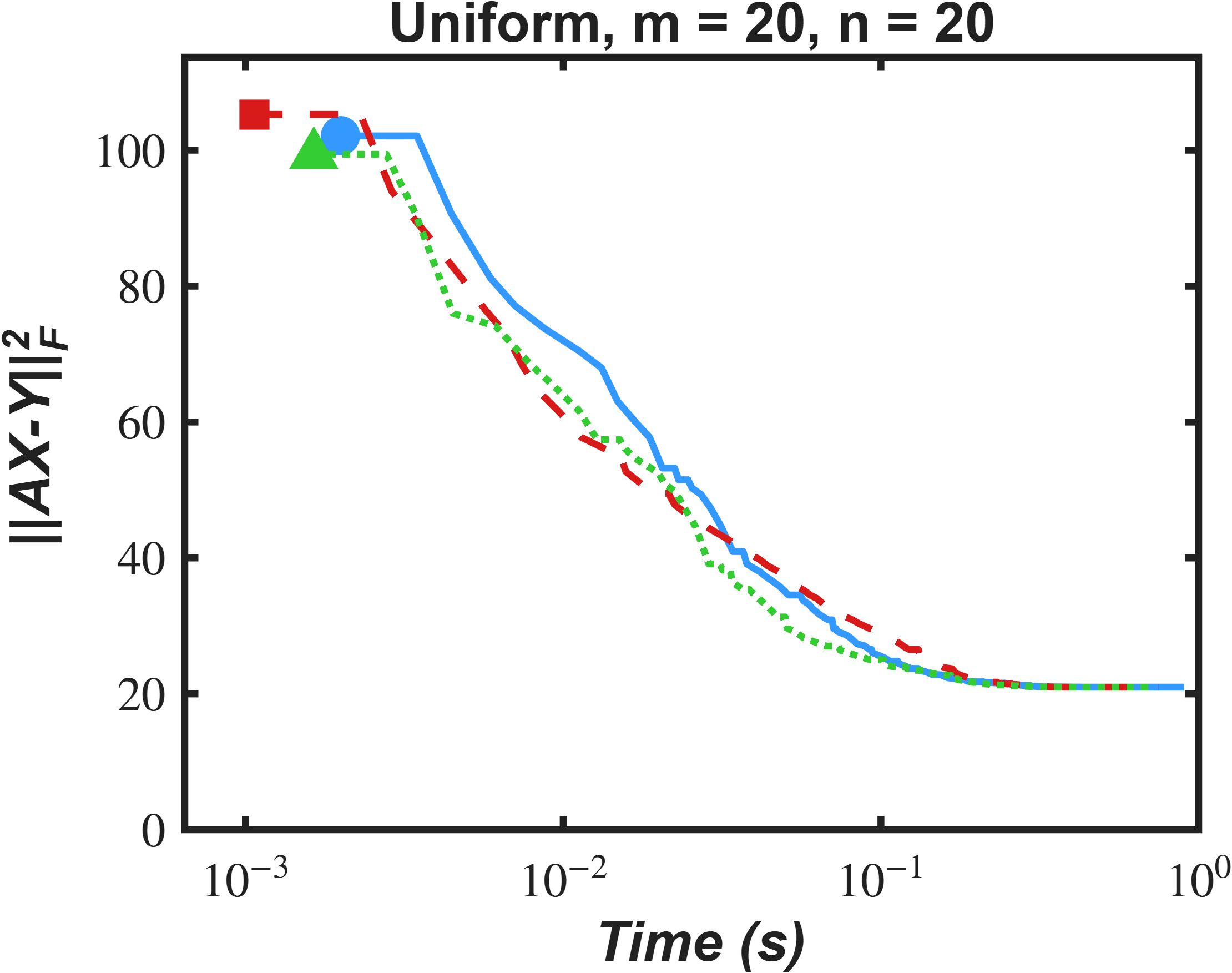}
\end{minipage}\hfill
\begin{minipage}[t]{0.10\textwidth}\vspace*{0pt}
    \phantom{\includegraphics[width=\textwidth]{convgKey.png}}
\end{minipage}\end{figure}

\begin{figure}
    \centering
\begin{minipage}[t]{0.45\textwidth}\vspace*{0pt}
    \includegraphics[width=\textwidth]{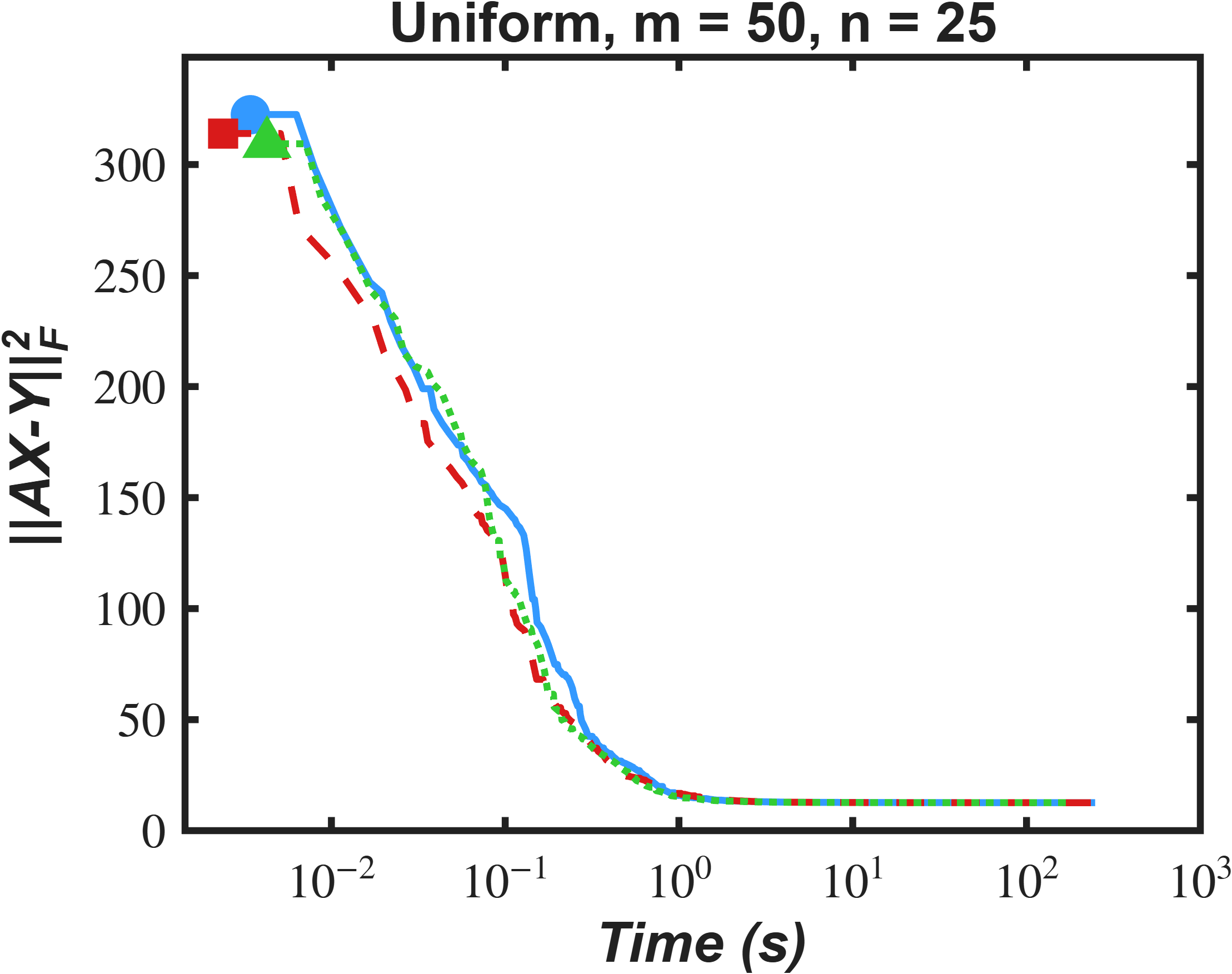}
\end{minipage}\hfill
\begin{minipage}[t]{0.45\textwidth}\vspace*{0pt}
    \includegraphics[width=\textwidth]{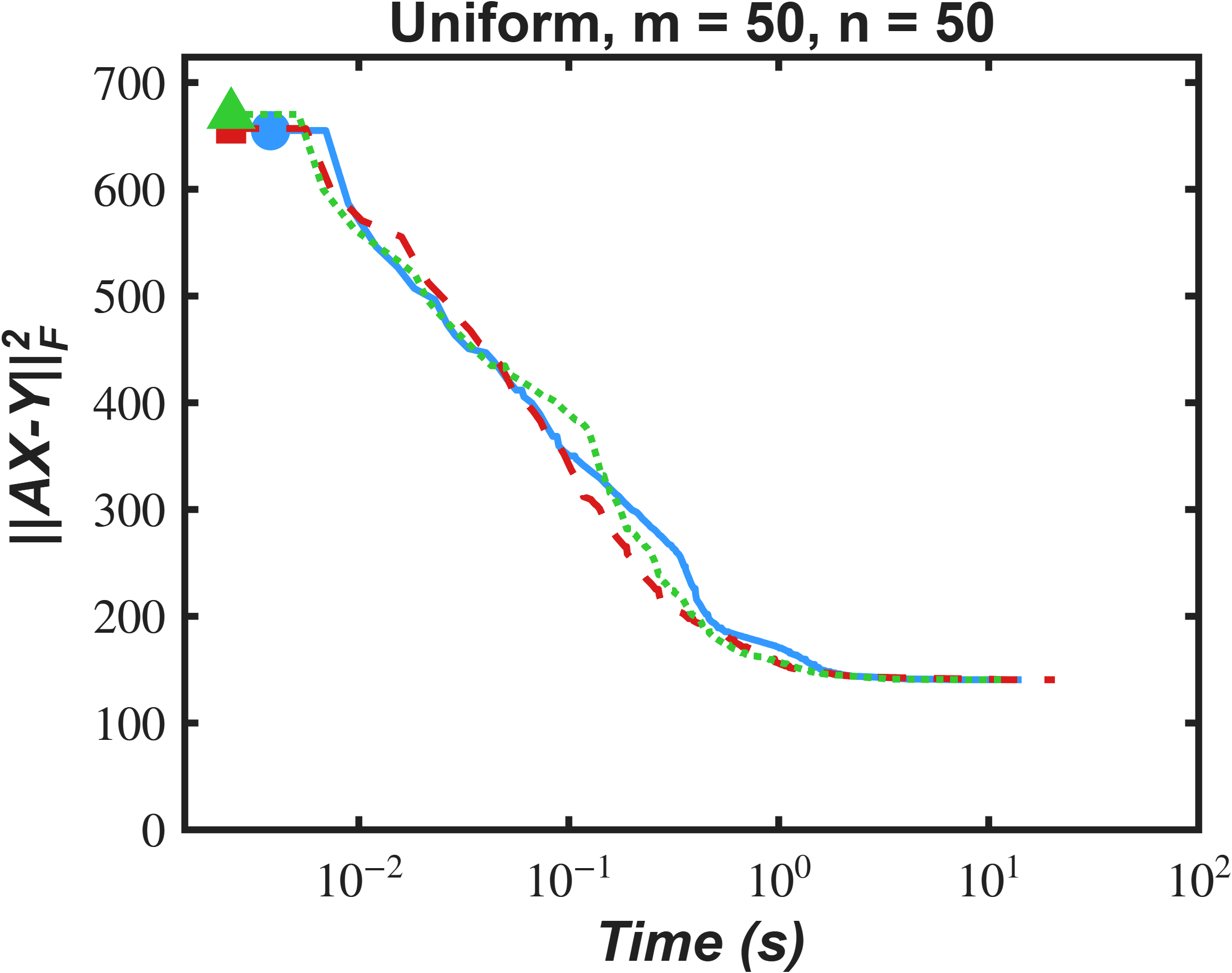}
\end{minipage}\hfill
\begin{minipage}[t]{0.10\textwidth}\vspace*{0pt}
    \phantom{\includegraphics[width=\textwidth]{convgKey.png}}
\end{minipage}
    \caption{Residual $\|AX-Y\|_F^2$ (with $A \in \mathcal{N}$) vs.~time to solve, over three random starts with uniform inputs.}\label{fig:convgUni}
\end{figure}
We also record the results of these experiments in the following tables:
\begin{table}[H]
\centering
\caption{Performance on complex \textit{Gaussian} inputs, best of three random starts.}
\begin{tabular}{cc|ccc}
\hline
$m$ & $n$ & Init Res & Final Res & Time (s) \\
\hline
10 &  2 &     9.36 &    0.000 &   0.03 \\
10 &  5 &    85.23 &    5.916 &   4.71 \\
10 &  8 &   131.31 &   32.207 &   0.17 \\
10 & 10 &   165.98 &   48.219 &   0.10 \\
20 &  4 &   101.82 &    0.000 &   0.05 \\
20 & 10 &   312.83 &   14.297 &  13.50 \\
20 & 16 &   566.84 &  101.916 &   0.44 \\
20 & 20 &   715.10 &  247.993 &   0.45 \\
50 & 10 &   943.43 &    0.092 &   0.96 \\
50 & 25 &  2447.13 &   91.508 & 113.47 \\
50 & 40 &  3861.55 &  797.191 &   9.19 \\
50 & 50 &  4952.57 & 1706.676 &   4.54 \\
\hline
\end{tabular}
\label{tab:perf_gaussian}
\end{table}

\newpage

\begin{table}[H]
\centering
\caption{Performance on complex \textit{uniform} inputs, best of three random starts.}
\begin{tabular}{cc|ccc}
\hline
$m$ & $n$ & Init Res & Final Res & Time (s) \\
\hline
10 &  2 &    0.90 &    0.000 &   0.03 \\
10 &  5 &   11.05 &    0.825 &   0.20 \\
10 &  8 &   21.93 &    6.154 &   0.17 \\
10 & 10 &   26.11 &    6.493 &   0.16 \\
20 &  4 &   14.78 &    0.640 &   0.16 \\
20 & 10 &   48.55 &    2.956 &   7.40 \\
20 & 16 &   80.16 &   14.320 &   1.78 \\
20 & 20 &  105.26 &   20.986 &   0.75 \\
50 & 10 &  129.79 &    2.508 &   1.21 \\
50 & 25 &  313.98 &   12.588 & 261.87 \\
50 & 40 &  504.74 &   76.858 &  25.35 \\
50 & 50 &  655.05 &  140.566 &  14.34 \\
\hline
\end{tabular}
\label{tab:perf_uniform}
\end{table}

Further note that in every trial $\|\operatorname{grad}_U f\|_F$ attains the desired threshold of less than $10^{-6}$, thus reaching approximate criticality. Future work could investigate whether there exists an initialization strategy that exploits the geometry of the objective function, while still avoiding degeneracies for rank-deficient $X$, to reduce compute time.

Another apparent trend is the relationship between matrix dimensions and solve time. The solver takes considerably more time to complete for $n$ near $0.5m$ than when $n$ lies near either end of the dimensional spectrum. When $n$ is small relative to $m$ (e.g.\ $n = 0.2m$), a reasonable explanation for the reduced solve time lies in how $\rank(X)$ constrains the problem---recall the analogous role of $\rank(X)$ in the Matrix Completion and Targeting Problems mentioned in Section~\ref{sec:intro}. Since ${\rank(X)\leq n}$, the problem is heavily underdetermined; $A$ is constrained only on $\colspace(X)$ and remains free on its orthogonal complement. Thus, this large degree of freedom significantly reduces the number of directions the solver must resolve, as the residual is unchanged along those left free, leaving an effectively lower-dimensional problem that the solver optimizes quickly. As $n$ grows larger, this freedom disappears. Why solve times when $n$ is proximal to $0.5m$ are markedly higher than when $n$ is proximal to $m$ remains unclear to the author, and we leave a finer analysis of this tendency to future work. Figures \ref{fig:convgGaus} and \ref{fig:convgUni}, however, demonstrate that the primary reduction in residual occurs within the first second on the tested examples---as the solver attempts to reduce the gradient to within the prescribed tolerance.

The freedom present in the small $n$ case has a further, observable consequence for the recovered solutions. Consider Table \ref{tab:diagnostic}:
\newpage
\begin{table}[H]
\centering
\caption{Solution diagnostics across the three random starts. \textit{Res.} is the range of final residuals as a percentage of the largest final residual; \textit{Spread} is the maximum pairwise $\|A_i - A_j\|_F$ as a percentage of the largest $\|A_i\|_F$.}
\begin{tabular}{cc|cc|cc}
\hline
& & \multicolumn{2}{c|}{Gaussian} & \multicolumn{2}{c}{Uniform} \\
$m$ & $n$ & Res.\ (\%) & Spread (\%) & Res.\ (\%) & Spread (\%) \\
\hline
10 &  2 & 0.00 & 120.21 & 0.00 & 127.24 \\
10 &  5 & 0.00 &   0.01 & 0.00 &  88.12 \\
10 &  8 & 0.43 &  70.85 & 0.00 &   0.00 \\
10 & 10 & 0.00 &   0.00 & 2.47 &  68.69 \\
20 &  4 & 0.00 & 126.25 & 0.00 & 106.99 \\
20 & 10 & 0.00 & 105.86 & 0.04 &  84.98 \\
20 & 16 & 1.02 &  69.45 & 0.76 &  76.33 \\
20 & 20 & 0.20 &  27.26 & 0.00 &   0.00 \\
50 & 10 & 0.00 & 126.35 & 0.00 & 123.90 \\
50 & 25 & 0.04 & 107.34 & 0.07 & 104.03 \\
50 & 40 & 0.17 &  86.82 & 0.11 &  40.35 \\
50 & 50 & 0.15 &  39.03 & 0.02 &  23.55 \\
\hline
\end{tabular}
\label{tab:diagnostic}
\end{table}

 Although the residual percent difference across starts is zero for all $n = 0.2m$ cases in Table~\ref{tab:diagnostic}, the recovered matrix $A$ is not the same across all starts, as reflected in the nonzero spread values. This non-uniqueness property again results from $A$ being unconstrained on $\colspace(X)^\perp$, since infinitely many distinct normal matrices may achieve the same optimal residual, and different starts may recover different matrices.

A separate phenomenon appears in a handful of cases, where the starts recover different final residuals. For example, in the uniform, $m = n = 10$ instance, the recovered residuals differ across starts in Table~\ref{tab:diagnostic}. Such cases are direct evidence of the failure of geodesic concavity of $f$ demonstrated in Section~\ref{sec:complexRed}, since the solver recovers distinct, suboptimal stationary points depending on its starting point. When the author allowed the solver to run to a smaller gradient-norm threshold of $10^{-15}$, this relative spread persisted rather than collapsing to a single value, further confirming that the starts recover genuinely distinct stationary points.

Finally, the primary difference we observe between the Gaussian and uniform setups is the average size of the residuals, but this is attributable to the differing scale of the input entries---uniform entries are smaller in magnitude than Gaussian entries on average, yielding correspondingly smaller residuals. However, we observe no meaningful effect of distribution type on solver
performance.

\subsection{Known Global Optimum Recovery}\label{ssec:complexKnown}~\\[8pt]
\indent In this subsection we revisit the Normal Targeting Problem by asking the following question: If a normal matrix $A^\star \in \mathcal{N}$ is given ahead of time such that $A^\star X - Y = 0$, can our solver recover $A^\star$ or some equally sufficient normal matrix? 

If we could guarantee that the solver always returns this result whenever such an $A^\star$ exists, then our model could serve as a heuristic approach to the Normal Targeting Problem, provided one is willing to accept the solver's computational cost. The nonconcavity of our problem (demonstrated in Section~\ref{sec:complexRed} and Subsection~\ref{ssec:complexPerform}), however, makes such a guarantee likely impossible. Thus, we instead numerically test how \emph{frequently} a residual of zero is recovered and how \emph{close} the final residual comes to it. This experiment offers us a glimpse of our model's global optimality accuracy, which the previous subsection could not provide, since we could not be certain what the global optimum was in our prior experiments.

We set up this experiment by first generating random $A^\star$ and $X$. We then set $Y:=A^\star X$ and plug both $X$ and $Y$ into our solver and observe the recovered residual. We only consider random $X$ with uniform entries, since the previous subsection yielded negligible difference between the two distributions, and we run two random starts. We generate $A^\star = U^\star D^\star (U^\star)^*$ by drawing $U^\star$ from the same Haar measure (the uniform measure on $\mathrm{U}(m)$) as the random starts and taking the diagonal entries of $D^\star$ (independent of $U^\star$) to have real and imaginary parts independently uniform on $[-1,1]$. Throughout, we write $A_{\mathrm{rec}}$ for the final normal matrix recovered by our algorithm. We obtain the following results (residual and gradient norm plots are available via our \href{https://github.com/kbierly/NPP}{GitHub repository}):

\begin{table}[H]
\centering
\caption{Global optimum recovery with uniform $X$ on two random starts. \textit{Rel.\ $A^\star$} is the relative difference $\|A_{\mathrm{rec}} - A^\star\|_F$ as a percentage of $\|A^\star\|_F$.}
\resizebox{\textwidth}{!}{%
\begin{tabular}{cc|cc|cc|cc|cc}
\hline
& & \multicolumn{2}{c|}{Init Res} & \multicolumn{2}{c|}{Final Res} & \multicolumn{2}{c|}{Time (s)} & \multicolumn{2}{c}{Rel.\ $A^\star$ (\%)} \\
$m$ & $n$ & Start 1 & Start 2 & Start 1 & Start 2 & Start 1 & Start 2 & Start 1 & Start 2 \\
\hline
10 &  2 &    1.8 &    1.4 & 1.2e-22 & 2.7e-16 &  0.66 & 0.04 &  148.0 &  117.2 \\
10 &  5 &    4.6 &    7.2 & 3.4e-12 & 4.0e-12 &  0.44 & 0.52 &   67.6 &   62.9 \\
10 &  8 &   11.6 &    8.8 & 1.6e-12 & 3.6e-12 &  0.68 & 0.50 &    0.0 &    0.0 \\
10 & 10 &   18.7 &   15.4 & 3.7e-14 & 1.4e-14 &  0.29 & 0.29 &    0.0 &    0.0 \\
20 &  4 &   18.7 &   16.3 & 2.8e-14 & 3.6e-16 &  0.07 & 0.09 &  142.8 &  137.8 \\
20 & 10 &   36.2 &   34.9 & 5.7e-11 & 4.5e-11 &  7.42 & 7.42 &  111.7 &   94.7 \\
20 & 16 &   85.9 &   77.5 & 4.8e-17 & 6.2e-15 &  0.92 & 0.73 &    0.0 &    0.0 \\
20 & 20 &  100.1 &   93.4 & 6.3e-18 & 3.3e-14 &  1.31 & 0.93 &    0.0 &    0.0 \\
50 & 10 &  111.9 &  110.4 & 5.3e-21 & 1.9e-23 &  0.76 & 1.50 &  130.2 &  314.9 \\
50 & 25 &  215.7 &  223.5 & 9.0e-13 & 8.8e-13 & 71.99 & 97.81 &   93.9 &   94.6 \\
50 & 40 &  404.1 &  432.3 & 5.7e-17 & 2.0e-15 & 11.80 & 13.39 &    0.0 &    0.0 \\
50 & 50 &  500.8 &  518.4 & 2.0e-17 & 3.0e-22 &  7.91 & 12.14 &    0.0 &    0.0 \\
\hline
\end{tabular}%
}
\label{tab:known_min}
\end{table}

From the above, we observe that a sufficient $A_{\mathrm{rec}}$ was recovered such that the final residual was less than $10^{-10}$ in every trial. Both random starts recover $A^\star$ (visible in the Rel.\ $A^\star$ column) in each $n\geq 0.8m$ trial, whereas when $n \leq 0.5m$, equally sufficient but distinct normal matrices were recovered, reflecting the freedom on $\colspace(X)^{\perp}$.

Although these results are specific to their constructed instances and do not constitute a guarantee, the consistency observed across trials in our previous experiments in Subsection \ref{ssec:complexPerform}, together with the final residual precision attained here, supports the accuracy of our model.

\subsection{Comparison with Established Algorithms over the CNP}\label{ssec:complexComp}~\\[8pt]
\indent As we touched on in Section~\ref{sec:intro}, our model has no direct competitors for the NPP, but when we let $X = I_m$ and $Y \in \M_m(\C)$, the NPP reduces to exactly the CNP, allowing us to numerically compare our method with established techniques. Setting $X=I_m$ simplifies several steps of the process we follow in Sections~\ref{sec:complexRed} and~\ref{sec:complexGradient}. Since the rows of $U^*X = U^*$ have unit norm, we have $\phi_i = 1$ for all $i=1,\ldots, m$, and $D^\star_U$'s optimal diagonal entries become
\begin{align*}
   d_i^\star=\frac{\gamma_i}{\phi_i}=(U^*Y)_i(U^*)_i^*=e_i^* (U^*Y U)e_i=(U^*YU)_{i,i},
\end{align*}
and the objective~\eqref{eq:fU} reduces (under unitary invariance of the Frobenius norm) to
\begin{equation}\label{eq:objCNP}
   f(U) = \|D^\star_U\|_F^2 = \|\operatorname{diag}(U^*YU)\|_F^2.
\end{equation}
Substituting $X = I_m$ into the Euclidean gradient~\eqref{eq:euclidean_grad} yields
\begin{equation}\label{eq:gradCNP}
   \nabla_U f = 2\bigl(Y^*UD^\star_U + YU(D^\star_U)^* - U|D^\star_U|^2\bigr),
\end{equation}
and into the Euclidean Hessian~\eqref{eq:hess} yields
\begin{align*}
   H_U f[V] = 2\bigl(
   &Y^*VD^\star_U + Y^*U\,\delta(D^\star_U) \\
   &+ YV(D^\star_U)^* + YU\,\delta(D^\star_U)^* \\
   &- V|D^\star_U|^2 - U\,\delta(|D^\star_U|^2)\bigr),
\end{align*}
where, since $\delta(\phi_i) = 0$,
\begin{align*}
   \delta(D^\star_U) = \operatorname{diag}(V^*YU + U^*YV).
\end{align*}
The Riemannian gradient and Hessian are then obtained through the same methods in Section~\ref{sec:complexGradient}.

Gabriel \cite{gabriel} and Ruhe \cite{ruhe} somewhat independently developed the first tractable approaches for approximating the CNP in 1987. However, we only compare our model against Ruhe's work, since he provided a complete algorithm that works for the general CNP, whereas Gabriel characterized the structure of its solutions for specific classes of matrices. 

Ruhe's algorithm relies on what he and Gabriel refer to as a $\Delta H$-matrix, see \cite[Definition 1]{ruhe}. Ruhe proves that every square matrix $Y$ has a closest normal matrix $A$, and in the coordinates of the eigenvectors of $A$, $Y$ is a $\Delta H$-matrix. He furthermore proves that there is a convergent algorithm \cite[Algorithm J]{ruhe} that transforms an arbitrary square $Y$ into a $\Delta H$-matrix by means of a sequence of unitary similarities. Guglielmi and Scalone state in \cite[Section 2.1]{scalone} that Ruhe's algorithm ``is proved to determine the global optimum'' to the CNP, but Ruhe only proves that the algorithm converges to a stationary point, and he provides only local optimality conditions (\cite[Theorems 3 \& 4]{ruhe}), not global ones. In all instances that Ruhe numerically tested, his algorithm converged to a normal matrix satisfying these conditions (i.e.\ not saddle points), but he does not explicitly demonstrate that the recovered normal matrices are globally optimal. 

Guglielmi and Scalone further state that ``since this problem is solved globally by Ruhe's algorithm, our interest is mainly directed to the computation of the closest real normal matrix and to the normal completion problem'' \cite[Section 2.1]{scalone}. Thus, rather than developing an algorithm to compete with Ruhe's on the CNP, they concentrated on developing the first algorithms that address these alternate problems. They demonstrate, however, that their algorithm \cite[Algorithm 1]{scalone} is not specific to only these problems and is generalizable over the complex numbers. They further report that in ``all examples [when implementing the complex version of their algorithm], we have found the same matrix obtained by applying Ruhe's algorithm'' \cite[Section 2.1]{scalone}. They do not benchmark how their algorithm performs against Ruhe's on the general CNP in terms of computational efficiency, however. We evaluate and compare such performance in this subsection, including against our solver.

Guglielmi and Scalone's approach differs significantly from Ruhe's. They rely on the fact that a matrix is normal if and only if the square of its Frobenius norm is equal to the sum of the squares of the moduli of its eigenvalues. To objectively quantify this property, they employ Henrici's (squared) \textit{departure from normality} \cite{henrici}:
\begin{equation}\label{eq:Henrici}
    \nu(Y) = \|Y\|^2_F - \|\Lambda\|^2_F = \|Y\|^2_F -\sum_{i=1}^m|\lambda_i(Y)|^2,
\end{equation}
such that $Y$ is normal if and only if \eqref{eq:Henrici} is equal to $0$. Introducing $\varepsilon > 0$ and a perturbation direction $E \in \M_m(\C)$ with $\|E\|_F = 1$, they minimize \[\|(Y+\varepsilon E)-Y\|_F^2 = \varepsilon^2\] subject to the constraint
\[\nu(Y+\varepsilon E)=0,\] such that the normal $A^\star = Y+\varepsilon^\star E^\star$ is optimal. Note that this result is what the authors aspire to obtain, and determining a global optimum is not necessarily feasible or guaranteed in practice, as noted in \cite[Section 4.1]{scalone}.

In pursuit of this result, Guglielmi and Scalone implement a two-level gradient system. The ``inner level'' first optimizes $E$ by fixing $\varepsilon$, computing the gradient of $\nu(Y+\varepsilon E)$, updating $E$ by steepest descent, normalizing to preserve $\|E\|_F=1$, and iterating until $E$ settles at a negative multiple of the gradient, which is a local optimum (denoted $E(\varepsilon)$); see \cite[Section 3.2]{scalone} for additional details on stationary points. Once this process is complete, their algorithm commences the ``outer level,'' where the direction is fixed at $E(\varepsilon)$ and the derivative of $\nu(Y+\varepsilon E(\varepsilon))$ with respect to $\varepsilon$ is computed. If $\dot{\varepsilon}$ is such that $\nu(Y+\dot{\varepsilon} E(\varepsilon))=0$, then this derivative is equal to zero as well by \cite[Theorem 5]{scalone}. Notably, for the CNP, if $\varepsilon>\dot{\varepsilon}$, then $\nu(Y+\varepsilon E(\varepsilon))=0$  by \cite[Theorem 6]{scalone}, and if $\varepsilon\leq \dot{\varepsilon}$, then the function is locally convex. Consequently, Newton's method can be applied from the left to determine a locally minimal $\dot{\varepsilon}$, where $\nu(Y+\dot{\varepsilon} E(\varepsilon))=0$. Once the outer level terminates, their algorithm loops until improvements on $\varepsilon$ are negligible within a prescribed tolerance. Guglielmi and Scalone further incorporate a bisection correction as a safeguard against overshooting $\dot{\varepsilon}$ before the iterates reach the locally convex regime.

Observe that the description above reduces their algorithm to only the components relevant to the CNP; an advantage that Guglielmi and Scalone's algorithm enjoys over ours is that one can implement it to address the Normal Matrix Completion Problem by fixing entries of $Y$ via a given pattern, which involves a slightly more complicated algorithm. Whether our Riemannian manifold optimization techniques could be modified to similarly work over this problem, we leave to future work.

Neither Ruhe's nor Guglielmi and Scalone's MATLAB code is publicly available. However, the author of this paper corresponded with the latter authors, who graciously offered their implementation for numerical testing. Ruhe's original code could not be obtained due to his passing in 2015. We therefore faithfully reproduce Ruhe's algorithm in MATLAB from his pseudocode in \cite[Algorithm J]{ruhe}, incorporating the determinant sign correction noted in \cite[Section 5]{higham}. Any comparison with Ruhe's algorithm therefore comes with the caveat that it reflects our own implementation, which may not match the performance of a tuned original. However, we test all three methods---Ruhe's, Guglielmi and Scalone's, and our Riemannian trust-region solver on a random start---against several examples from the prior two papers to ensure that we accurately recover and validate previous results. Specifically, we examine:
\begin{itemize}
    \item From Ruhe \cite{ruhe}:
    \begin{itemize}
        \item[$\circ$] A specific $2\times2$ complex matrix (Ruhe's unnamed first example in \cite[Section 4]{ruhe}), whose putative closest normal matrix is also recovered in \cite[Example 8]{scalone}.
        \item[$\circ$] $B_7$, a $7\times7$ complex matrix with random, Gaussian entries.
        \item[$\circ$] $J_7$, a $7\times7$ Jordan block with zero eigenvalue.
        \item[$\circ$] $T_7$, a $7\times7$ nilpotent matrix (random strictly upper triangular).
        \item[$\circ$] $F_{12}$, the $12\times12$ Frank matrix, whose entries are
        \[(F_{12})_{jk}=\begin{cases}
            \min(j,k)&\text{if }k \geq j-1\\
            0 & \text{otherwise},
        \end{cases}\]
        and, as Ruhe notes, is a ``favorite stumbling block for eigenvalue algorithm developers.''
    \end{itemize}
    \item From Guglielmi and Scalone \cite{scalone}:
    \begin{itemize}
        \item[$\circ$] Example 1, a $3\times 3$ real matrix.
        \item[$\circ$] Example 9, a $7\times7$ complex matrix arising in \cite[Example 2]{barl}, for which they report a $2$-norm comparison.
    \end{itemize}
\end{itemize}

To better understand how these algorithms scale performance-wise as the amount of data increases, we also test the three methods across random, uniformly distributed $Y\in \M_m(\C)$ with complex entries and 
${m\in \{10,20,50,100,200,250\}}$. We record the results here (more extensive data can again be found at our \href{https://github.com/kbierly/NPP}{GitHub repository}):
\newpage

\begin{table}[H]
\centering
\caption{Performance on the test matrices of \cite{ruhe} and \cite{scalone}. Line styles as in Figure~\ref{fig:complexLit}. \textit{Iterations} reports a different metric for each method: the number of iterations for NPP, total outer\,/\,inner iterations for G\&S, and sweeps for Ruhe.}
\label{tab:literature}
\begin{tabular}{c|c|c|c|c|c}
\hline
\rule[-1.2ex]{0pt}{4.2ex}Matrix & Method & Final Residual & $\|AA^*-A^*A\|_F$ & Iterations & Time (s) \\
\hline
\multirow{3}{*}{$2\times2$}
  & \lsnpp\ NPP  & 1.933 & 4.7e-16 & 9 & 0.024 \\
  & \lsgs\ G\&S  & 1.933 & 7.7e-07 & 47\,/\,318 & 0.012 \\
  & \lsruhe\ Ruhe & 1.933 & 2.8e-15 & 2 & 0.003 \\
\hline
\multirow{3}{*}{$B_7$}
  & \lsnpp\ NPP  & 29.752 & 3.8e-14 & 15 & 0.044 \\
  & \lsgs\ G\&S  & 29.752 & 1.1e-06 & 15\,/\,17562 & 0.680 \\
  & \lsruhe\ Ruhe & 29.752 & 1.9e-13 & 123 & 0.005 \\
\hline
\multirow{3}{*}{$J_7$}
  & \lsnpp\ NPP  & 0.857 & 1.1e-15 & 13 & 0.027 \\
  & \lsgs\ G\&S  & 0.857 & 2.7e-07 & 21\,/\,1226 & 0.072 \\
  & \lsruhe\ Ruhe & 0.857 & 2.8e-15 & 17 & 0.011 \\
\hline
\multirow{3}{*}{$T_7$}
  & \lsnpp\ NPP  & 12.943 & 6.8e-15 & 18 & 0.047 \\
  & \lsgs\ G\&S  & 12.943 & 7.0e-07 & 14\,/\,35124 & 1.381 \\
  & \lsruhe\ Ruhe & 12.943 & 5.4e-14 & 278 & 0.012 \\
\hline
\multirow{3}{*}{$F_{12}$}
  & \lsnpp\ NPP  & 509.922 & 4.8e-13 & 36 & 0.217 \\
  & \lsgs\ G\&S  & 509.922 & 4.8e-06 & 21\,/\,10745 & 0.963 \\
  & \lsruhe\ Ruhe & 510.155 & 5.5e-12 & 358 & 0.033 \\
\hline
\multirow{3}{*}{Ex.\ 1}
  & \lsnpp\ NPP  & 0.709 & 5.6e-16 & 11 & 0.024 \\
  & \lsgs\ G\&S  & 0.709 & 1.6e-07 & 11\,/\,874 & 0.030 \\
  & \lsruhe\ Ruhe & 0.709 & 1.0e-15 & 14 & 0.012 \\
\hline
\multirow{3}{*}{Ex.\ 9}
  & \lsnpp\ NPP  & 495.631 & 2.2e-11 & 19 & 0.047 \\
  & \lsgs\ G\&S  & 495.631 & 1.7e-03 & 20\,/\,1055 & 0.045 \\
  & \lsruhe\ Ruhe & 495.631 & 4.6e-11 & 10 & 0.002 \\
\hline
\end{tabular}
\end{table}

The final recovered residuals for the matrices in Table~\ref{tab:literature} differ only on $F_{12}$, where our method and Guglielmi and Scalone's both attain a marginally lower residual than Ruhe's. However, we observe that $\|AA^* - A^*A\|_F$, our measure of normality of the recovered matrix $A_{\mathrm{rec}}$, exceeds $10^{-7}$ in every Guglielmi--Scalone trial, whereas both our method and Ruhe's remain below $10^{-10}$ throughout. This gap is intuitively explainable through each method's structure, since our method and Ruhe's inherently work over $\mathcal{N}$ while Guglielmi and Scalone's reaches normality only in the limit.

We furthermore observe that in terms of computation time, Ruhe's algorithm terminated quickest (rather significantly) on each example in Table~\ref{tab:literature}. We validate
our implementation of Ruhe's algorithm on the two deterministic test matrices,
$J_7$ and $F_{12}$, obtaining exact agreement with \cite[Table~1]{ruhe} on Ruhe's measure of
normalized residual distance, $d_{\mathcal{N}}(Y)/\|Y\|_F$. We similarly recover sweep counts in strong agreement with Ruhe's on these two matrices; both comparisons are recorded in the MATLAB \texttt{diary} available in our repository.

Note that for the $2\times 2$ example, or \cite[Example 8]{scalone}, all three methods recovered the same putative closest normal matrix reported in \cite{ruhe} and \cite{scalone}. Specifically, for
\[Y=\begin{bmatrix}
    0.7616 + 1.2296i & -1.4740-0.4577i\\
    -1.6290 -2.6378i & 0.1885 - 0.8575i 
\end{bmatrix},
\]
we obtain
\[
A_{\mathrm{rec}}=\begin{bmatrix}1.1449 + 0.8324i &-2.0841 - 0.9957i \\
-1.0695 - 2.0473i &-0.1948 - 0.4603i
\end{bmatrix}.
\]
\begin{figure}[H]
    \centering
    \begin{minipage}[t]{0.45\textwidth}\vspace*{0pt}
    \includegraphics[width=\textwidth]{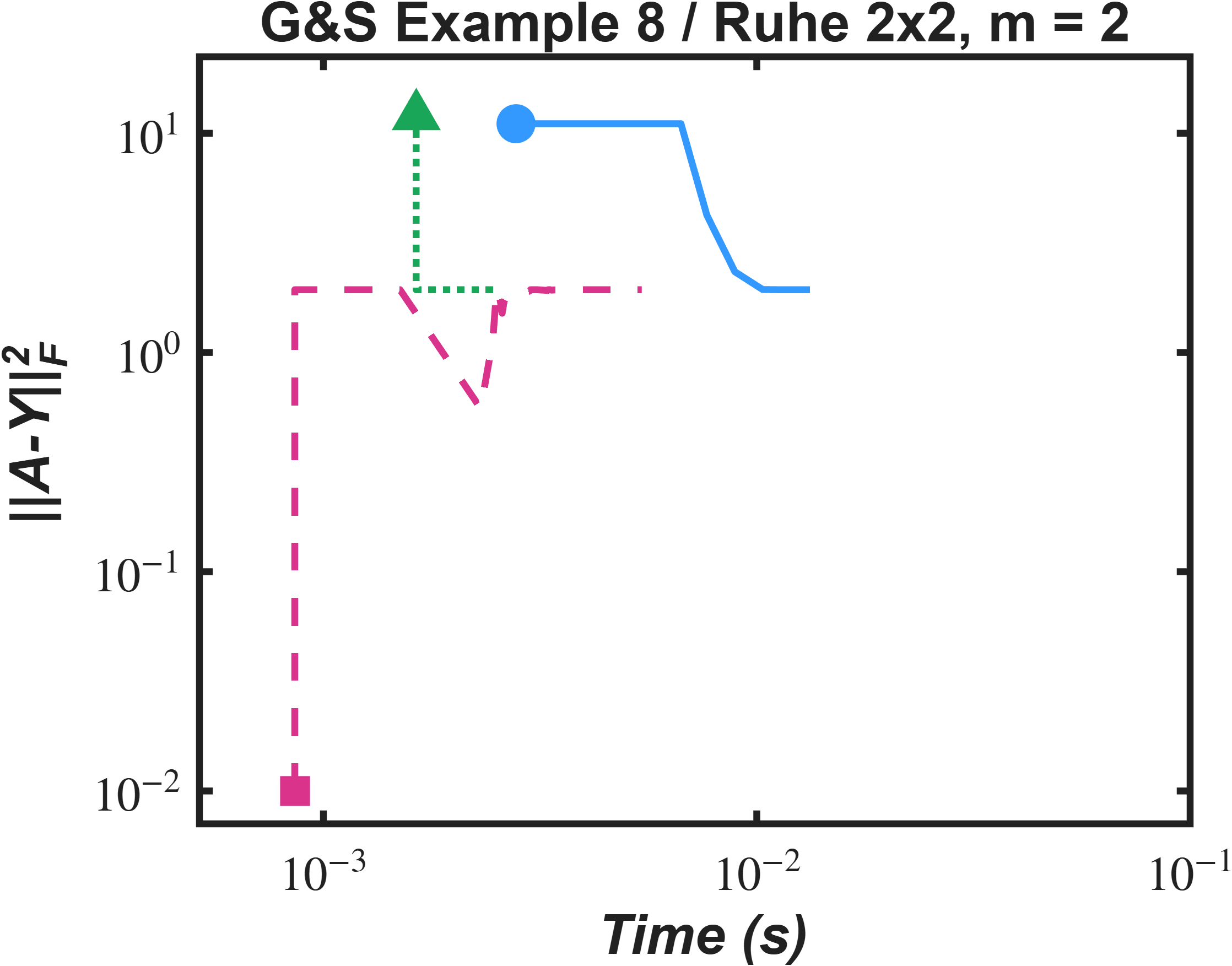}
\end{minipage}\hfill
\begin{minipage}[t]{0.45\textwidth}\vspace*{0pt}
    \includegraphics[width=\textwidth]{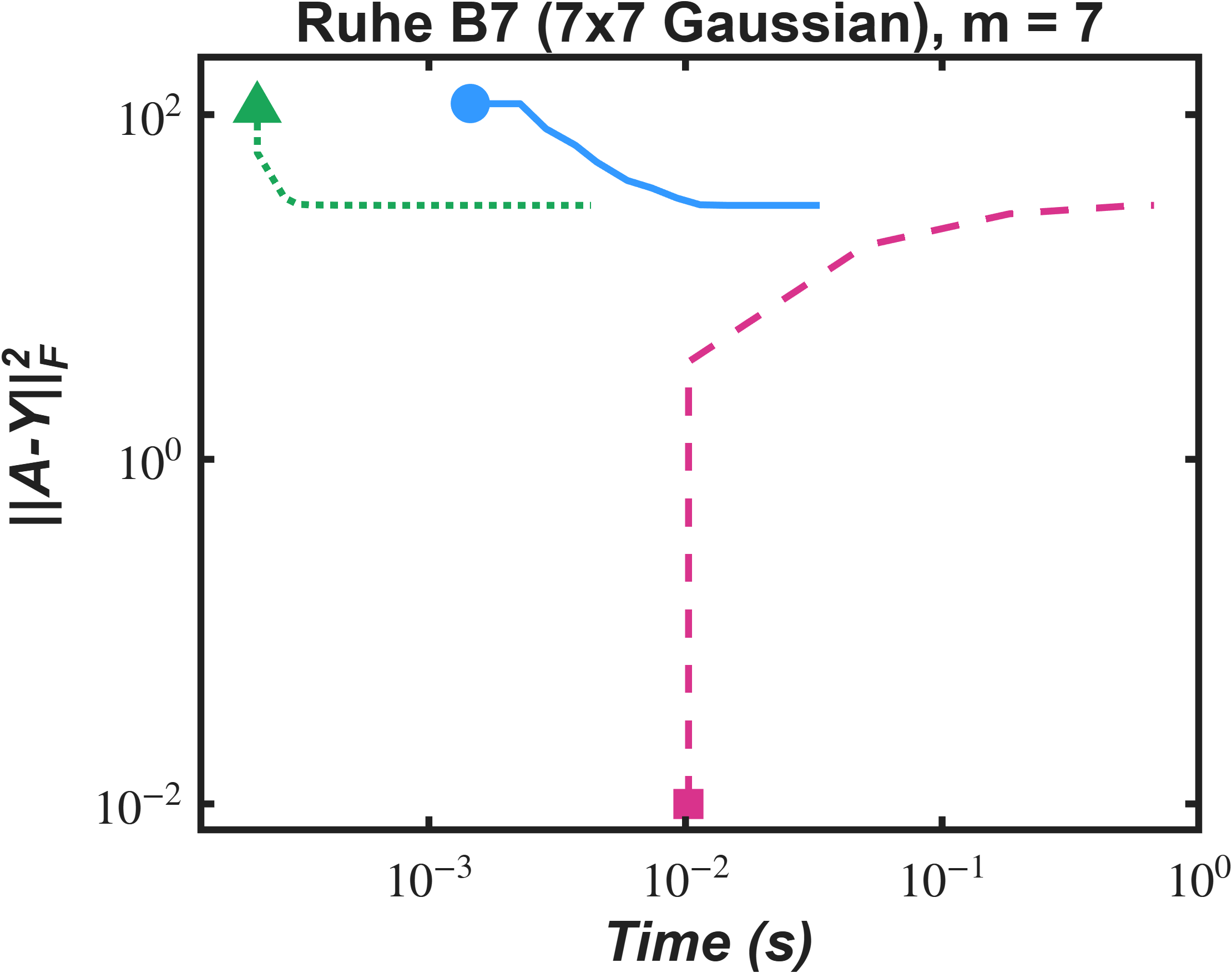}
\end{minipage}\hfill
\begin{minipage}[t]{0.10\textwidth}\vspace*{6.5pt}
    \includegraphics[width=\textwidth]{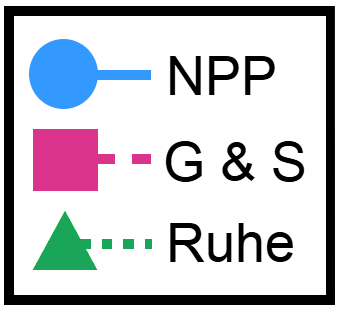}
\end{minipage}
    \\[5pt]
\begin{minipage}[t]{0.45\textwidth}\vspace*{0pt}
    \includegraphics[width=\textwidth]{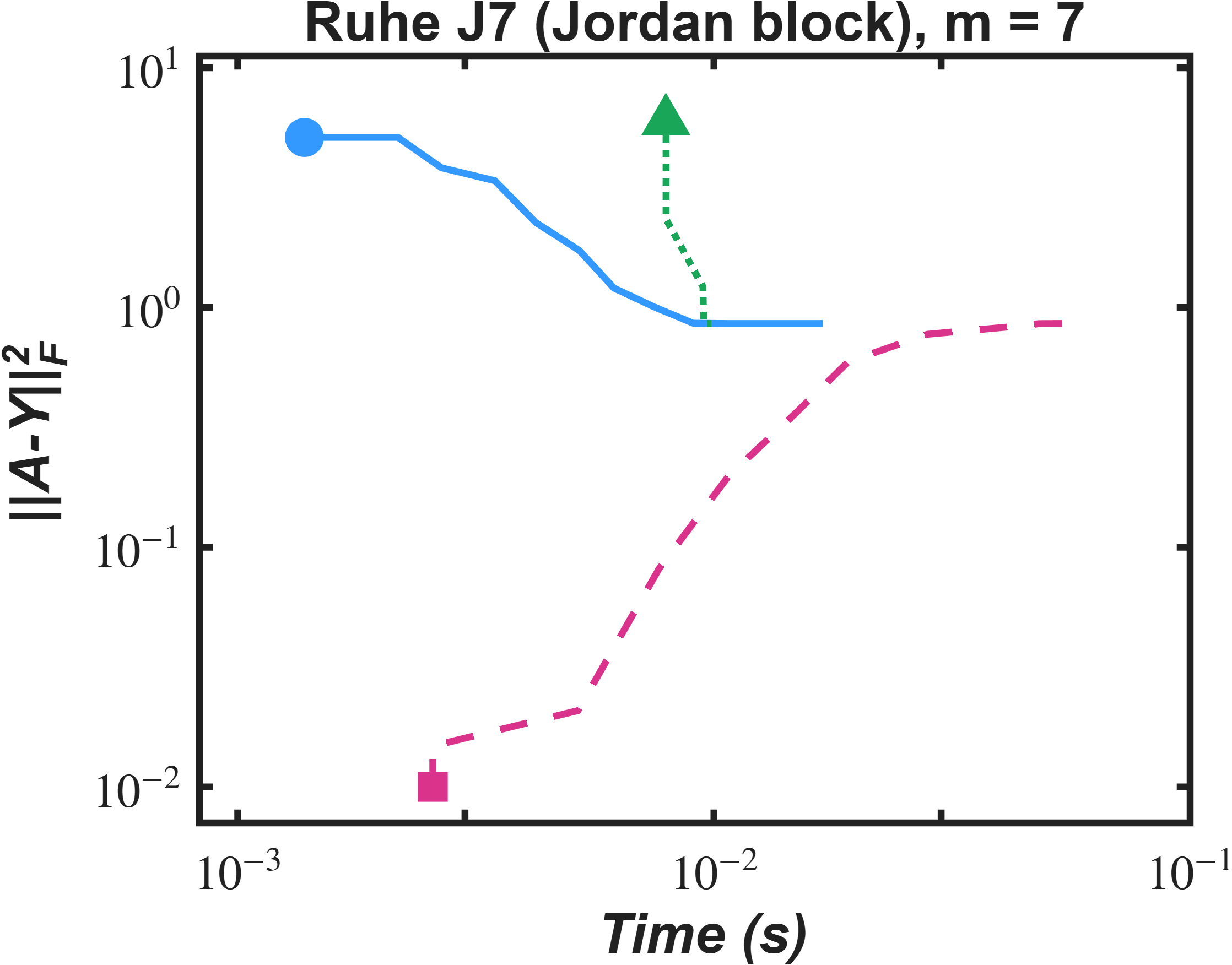}
\end{minipage}\hfill
\begin{minipage}[t]{0.45\textwidth}\vspace*{0pt}
    \includegraphics[width=\textwidth]{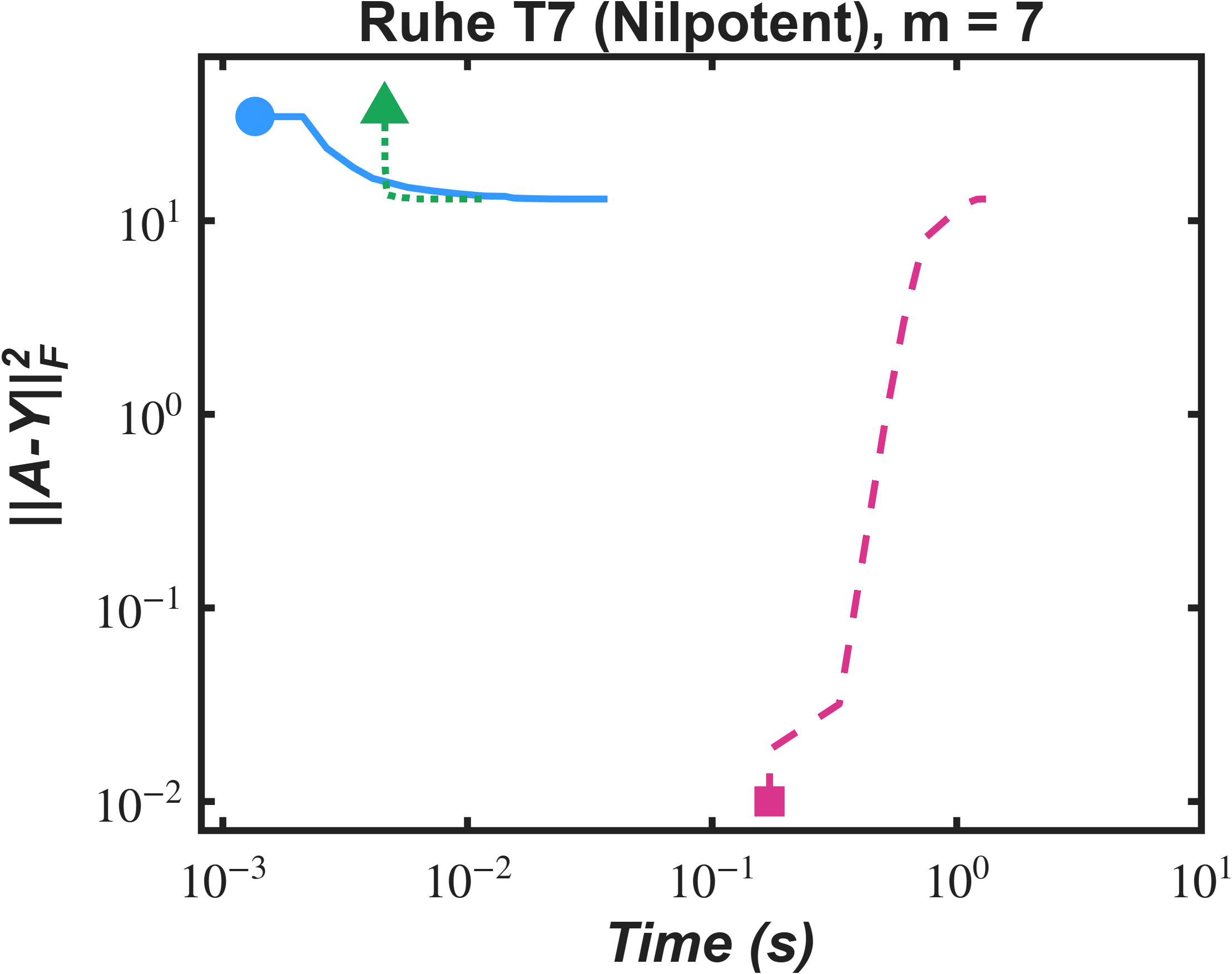}
\end{minipage}\hfill
\begin{minipage}[t]{0.10\textwidth}\vspace*{0pt}
    \phantom{\includegraphics[width=\textwidth]{convgKey.png}}
\end{minipage}
\begin{minipage}[t]{0.45\textwidth}\vspace*{0pt}
    \includegraphics[width=\textwidth]{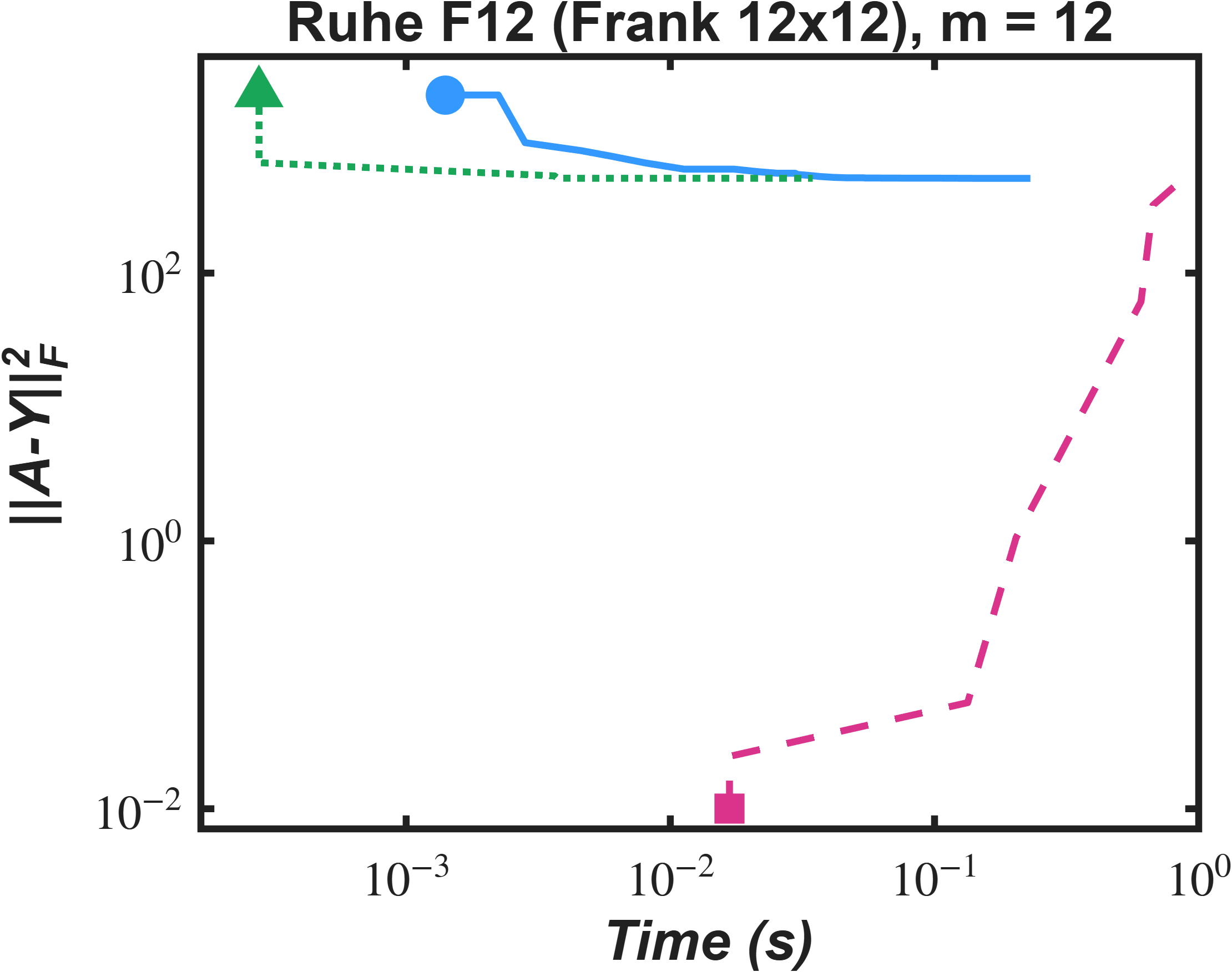}
\end{minipage}\hfill
\begin{minipage}[t]{0.45\textwidth}\vspace*{0pt}
    \includegraphics[width=\textwidth]{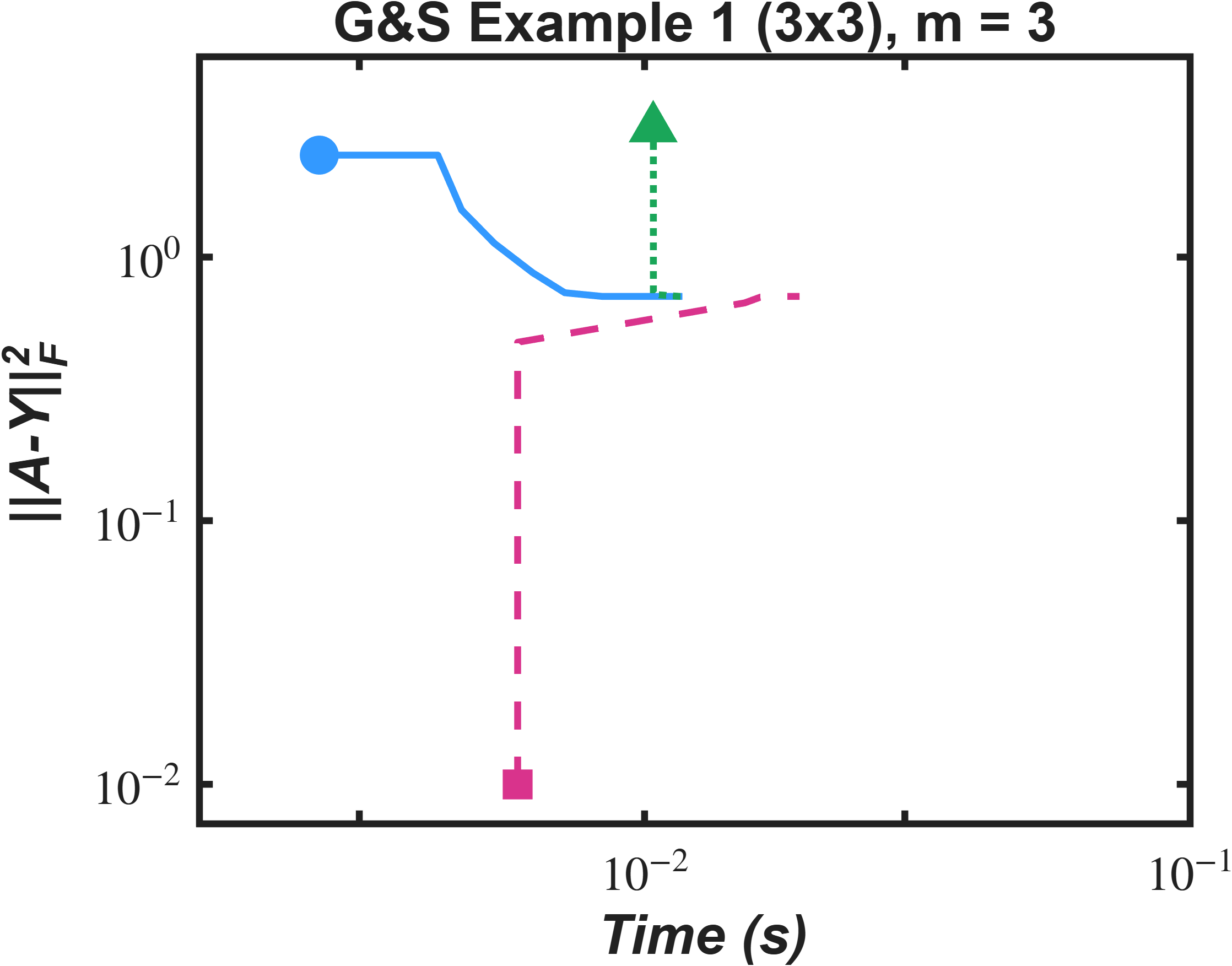}
\end{minipage}\hfill
\begin{minipage}[t]{0.10\textwidth}\vspace*{0pt}
    \phantom{\includegraphics[width=\textwidth]{convgKey.png}}
\end{minipage} 
\end{figure}
\newpage 
\begin{figure}[H]
    \centering
\begin{minipage}[t]{0.45\textwidth}\vspace*{0pt}
    \includegraphics[width=\textwidth]{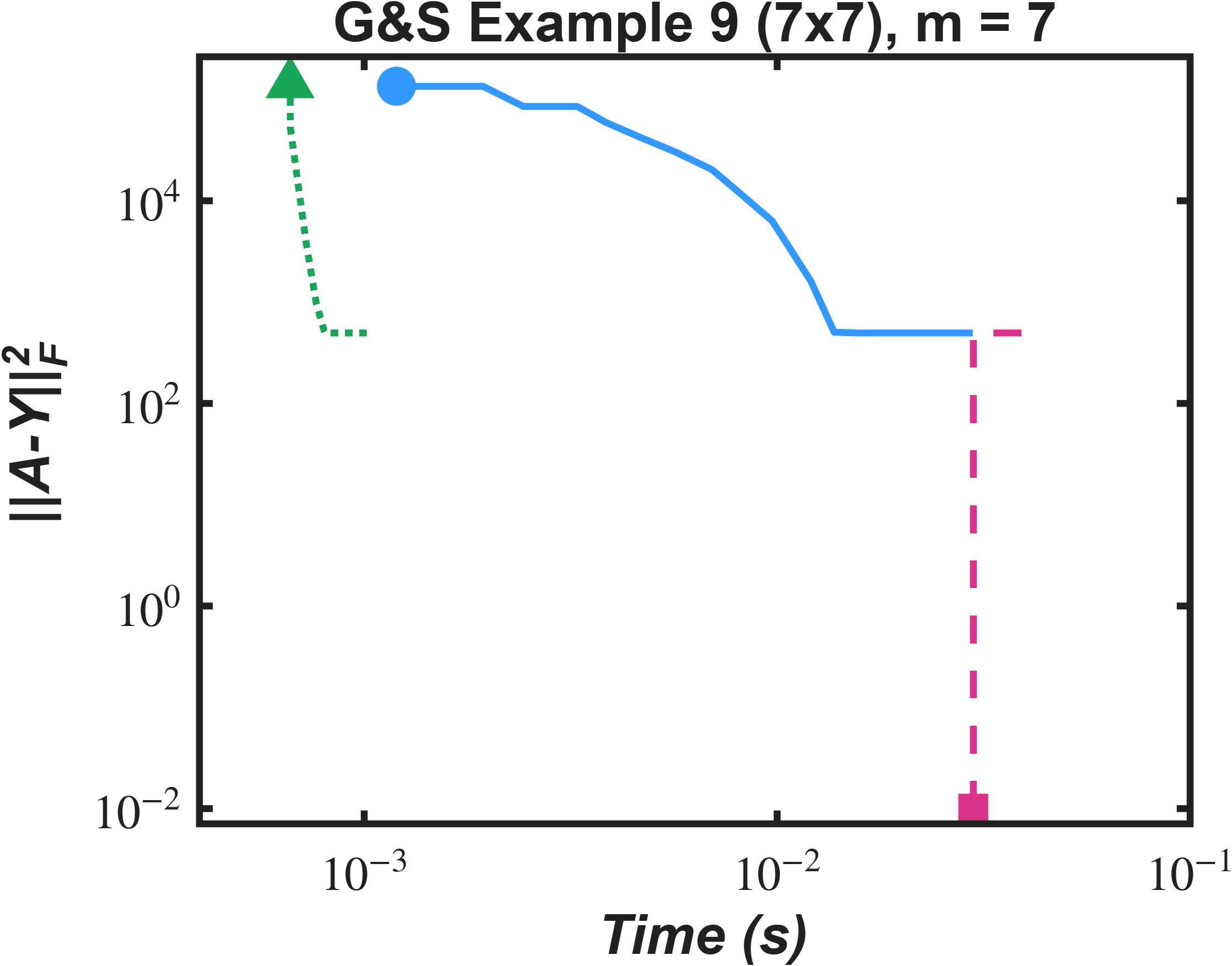}
\end{minipage}
    \caption{Residual $\|A-Y\|_F^2$ (note $A$ is not strictly in $\mathcal{N}$) vs.~time to solve, for the matrices in Table~\ref{tab:literature}.}\label{fig:complexLit}
\end{figure}

A cursory glance at Figure~\ref{fig:complexLit} may be slightly misleading, as Guglielmi and Scalone's algorithm seemingly starts at an excellent initial guess and moves further away until it settles at a much worse one, whereas both our method and Ruhe's start further away and converge toward a better residual. However, this difference in curves is attributable to how these algorithms fundamentally differ in structure. As mentioned previously, Guglielmi and Scalone's algorithm only approaches the normal matrices in the limit (its prior iterates being nonnormal). Thus, if we were to plot their residuals strictly over $A\in \mathcal{N}$, then their residual curve would only appear as the single point where the algorithm terminates within a satisfactory tolerance. Instead, we plot the residuals over $A\in \M_m(\C)$ to visualize the progress of
Guglielmi and Scalone's algorithm, which for $Y\notin \mathcal{N}$ begins at essentially $Y$ itself ($A = Y + 0.1 E(0.1)$) and simultaneously approaches normality while attempting to minimize distance to $Y$; $A$ remains strictly in $\mathcal{N}$ for our method and Ruhe's.

For Example~1, all three methods recover $\|A-Y\|_F^2 = 0.709$, and likewise for
Example~9 all three recover the same $2$-norm distance of ${\|A-Y\|_2 = 12.813}$ to three decimal places; both are less than the values reported in \cite{scalone}
(${\|X_c-Y\|_F^2 = 0.919}$ and ${\|A-Y\|_2 = 12.8569}$, respectively). Guglielmi and Scalone recover the matrix $X_c$ via their own implementation of Ruhe's algorithm for Example 1, and they obtain Example 9's $2$-norm value from their algorithm. We were unable to reproduce either value from these examples.
\newpage

\begin{table}[H]
\centering
\caption{Scaling on random $Y$ with uniformly distributed complex entries. Columns are as in Table~\ref{tab:literature}.}
\label{tab:scaling}
\begin{tabular}{c|c|c|c|c|c}
\hline
\rule[-1.2ex]{0pt}{4.2ex}$m$ & Method & Final Residual & $\|AA^*-A^*A\|_F$ & Iterations & Time (s) \\
\hline
\multirow{3}{*}{10}
  & \lsnpp\ NPP  & 4.202 & 1.3e-14 & 23 & 0.088 \\
  & \lsgs\ G\&S  & 4.202 & 2.8e-06 & 25\,/\,3973 & 0.260 \\
  & \lsruhe\ Ruhe & 4.202 & 3.8e-14 & 57 & 0.004 \\
\hline
\multirow{3}{*}{20}
  & \lsnpp\ NPP  & 16.302 & 5.1e-14 & 26 & 0.318 \\
  & \lsgs\ G\&S  & 16.325 & 2.5e-06 & 28\,/\,13294 & 3.20 \\
  & \lsruhe\ Ruhe & 16.325 & 4.8e-13 & 380 & 0.109 \\
\hline
\multirow{3}{*}{50}
  & \lsnpp\ NPP  & 97.700 & 4.3e-13 & 30 & 1.25 \\
  & \lsgs\ G\&S  & 97.591 & 2.3e-05 & 35\,/\,13084 & 25.41 \\
  & \lsruhe\ Ruhe & 97.693 & 1.7e-11 & 1033 & 2.76 \\
\hline
\multirow{3}{*}{100}
  & \lsnpp\ NPP  & 389.361 & 2.2e-12 & 95 & 18.24 \\
  & \lsgs\ G\&S  & 389.586 & 6.6e-06 & 16\,/\,50876 & 532 \\
  & \lsruhe\ Ruhe & 389.504 & 1.6e-10 & 2034 & 33.05 \\
\hline
\multirow{3}{*}{200}
  & \lsnpp\ NPP  & 1560.426 & 1.2e-11 & 135 & 61.61 \\
  & \lsgs\ G\&S  & 1560.821 & 2.6e-06 & 15\,/\,49319 & 2600 \\
  & \lsruhe\ Ruhe & 1560.412 & 9.5e-10 & 2254 & 255 \\
\hline
\multirow{3}{*}{250}
  & \lsnpp\ NPP  & 2428.283 & 1.8e-11 & 193 & 352 \\
  & \lsgs\ G\&S  & 2428.891 & 9.1e-05 & 30\,/\,68833 & 5290 \\
  & \lsruhe\ Ruhe & 2428.565 & 9.0e-09 & 6516 & 1387 \\
\hline
\end{tabular}
\end{table}

In Table~\ref{tab:scaling}, we analyze how the methods compare as the amount of
data increases. For each instance, we again find that each method settles at a
normal $A$ with a final residual that does not differ significantly from the
others, and there exists an instance where each method attains a marginally
lower residual than the other two, so there is no consistent winner.

As in Table~\ref{tab:literature}, Guglielmi and Scalone's recovered $A$ has the
disadvantage that $\|AA^*-A^*A\|_F$ is larger in each instance than for the other
two methods. While we could tighten the stopping criteria for their algorithm so
that $A$ better satisfies this normality condition, we observe that their
algorithm, at least on the above examples, already requires more compute time to terminate
than the other two, and this
disparity only widens as the matrix size increases.

As in Table~\ref{tab:literature}, Ruhe's algorithm remains the fastest method on
the smaller matrices ($m = 10$ and $m = 20$), but is overtaken by our method as the
matrix size grows large. The crossover in compute time occurs at $m = 50$, and
beyond this point our method is fastest in each instance.
\newpage

\begin{figure}[H]
    \centering
    \begin{minipage}[t]{0.45\textwidth}\vspace*{0pt}
    \includegraphics[width=\textwidth]{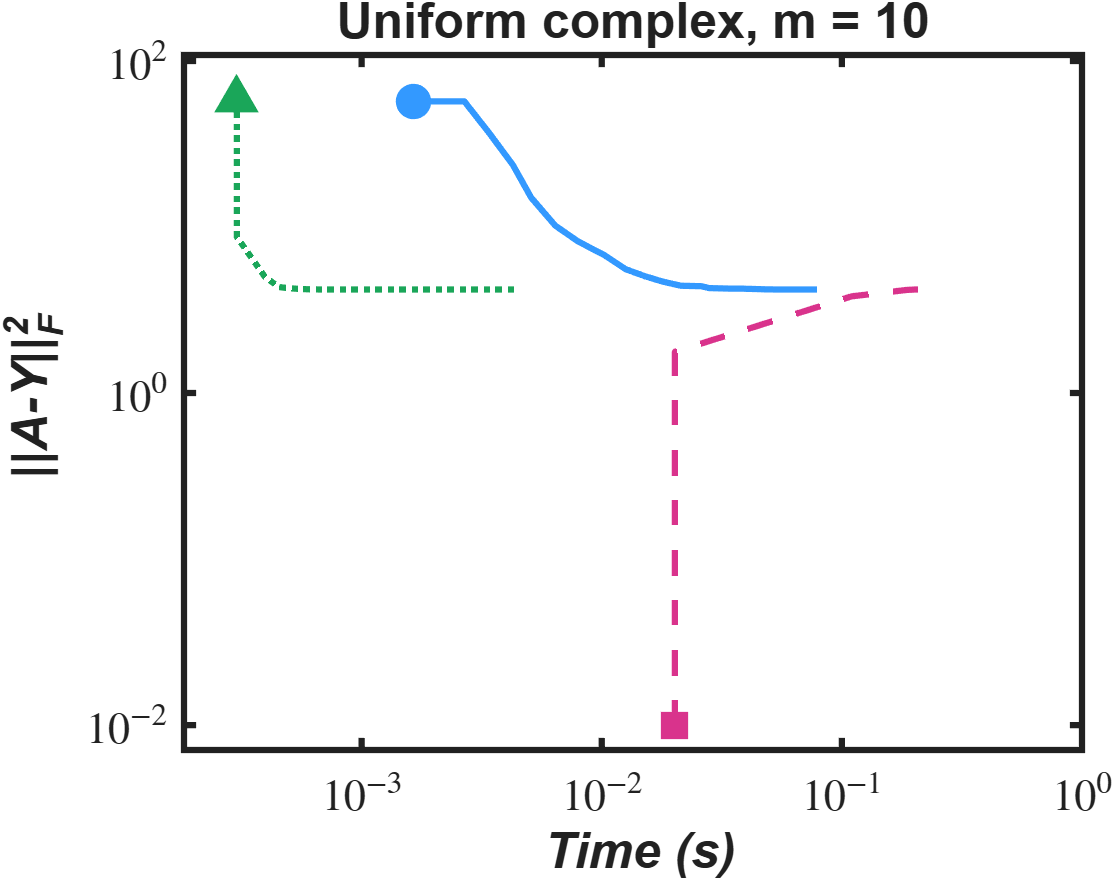}
\end{minipage}\hfill
\begin{minipage}[t]{0.45\textwidth}\vspace*{0pt}
    \includegraphics[width=\textwidth]{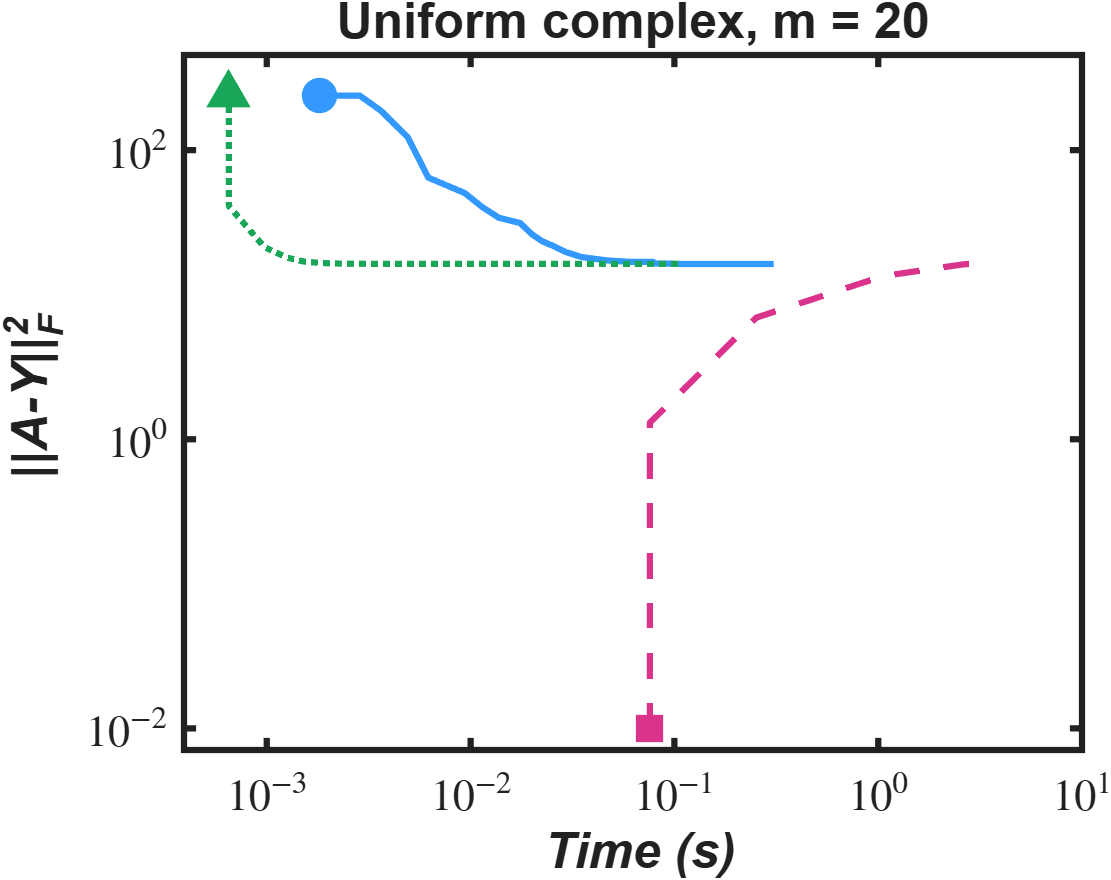}
\end{minipage}\hfill
\begin{minipage}[t]{0.10\textwidth}\vspace*{6.8pt}
    \includegraphics[width=\textwidth]{convgKey1.png}
\end{minipage}
    \\[5pt]
\begin{minipage}[t]{0.45\textwidth}\vspace*{0pt}
    \includegraphics[width=\textwidth]{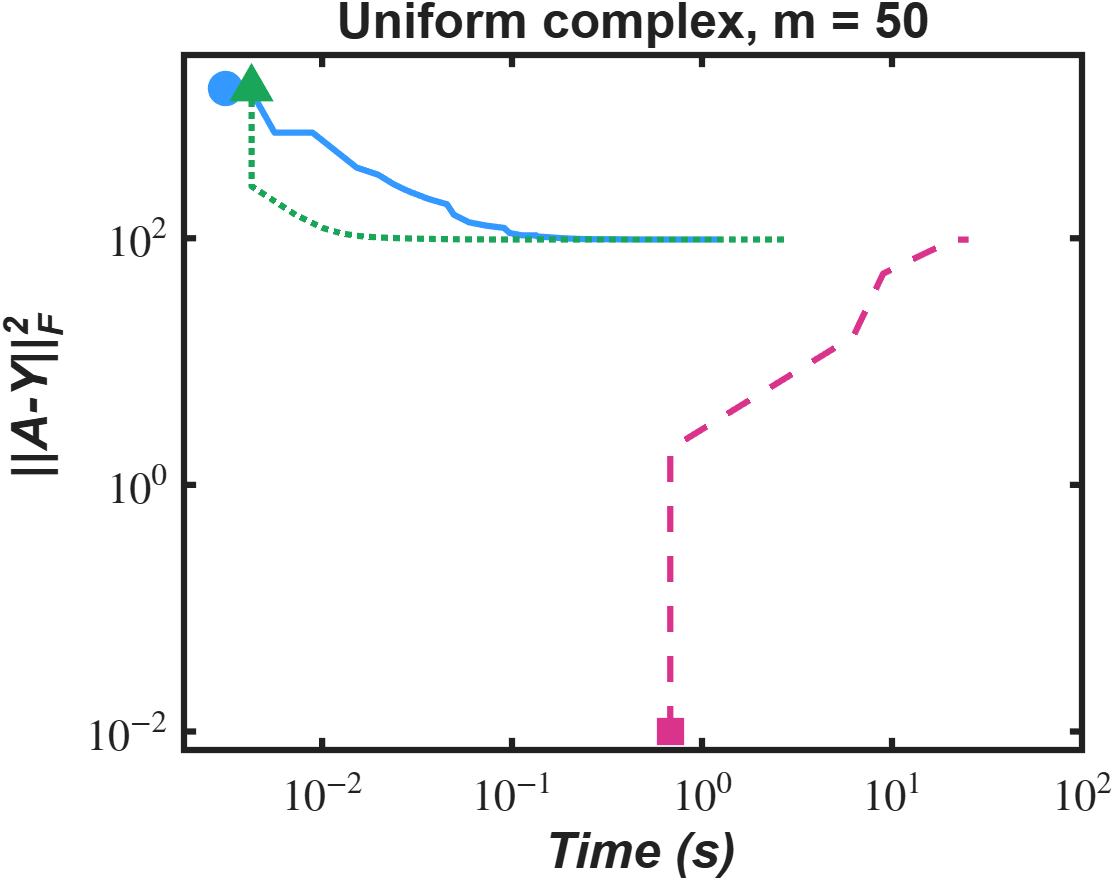}
\end{minipage}\hfill
\begin{minipage}[t]{0.45\textwidth}\vspace*{0pt}
    \includegraphics[width=\textwidth]{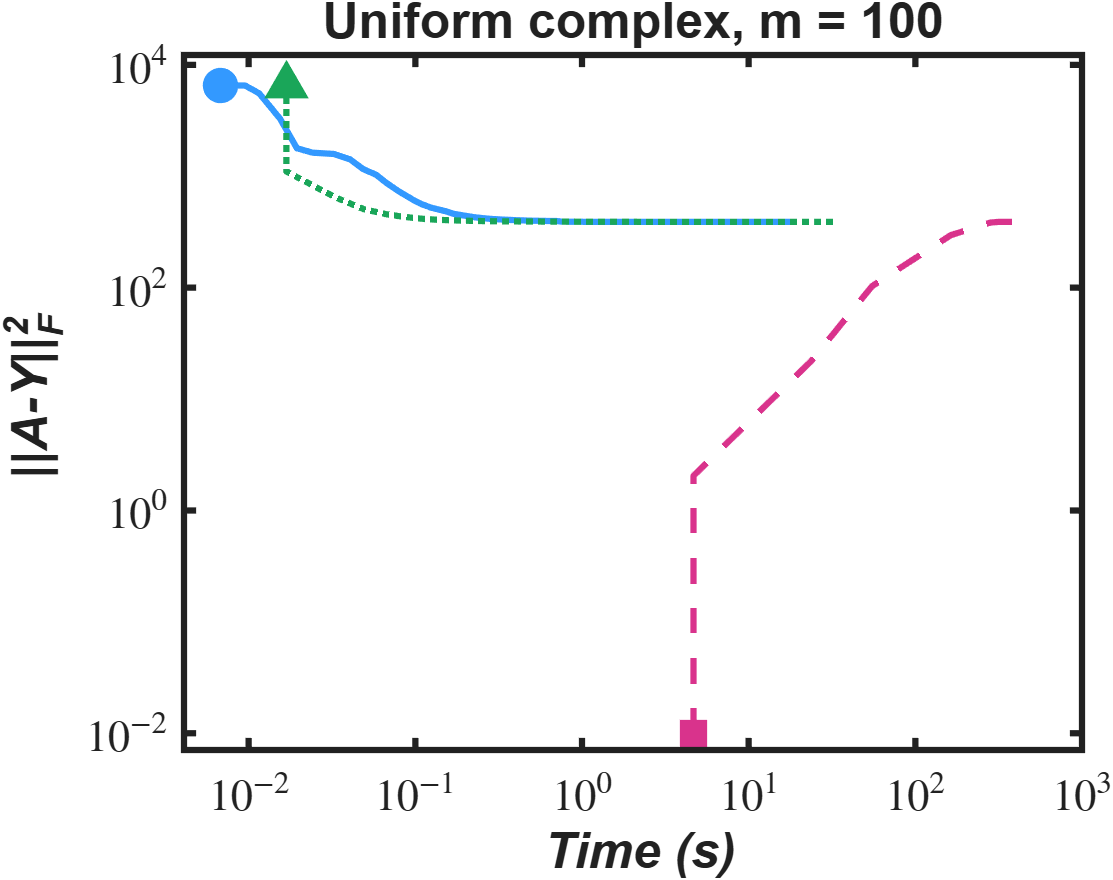}
\end{minipage}\hfill
\begin{minipage}[t]{0.10\textwidth}\vspace*{0pt}
    \phantom{\includegraphics[width=\textwidth]{convgKey.png}}
\end{minipage}   \\[5pt]
\begin{minipage}[t]{0.45\textwidth}\vspace*{0pt}
    \includegraphics[width=\textwidth]{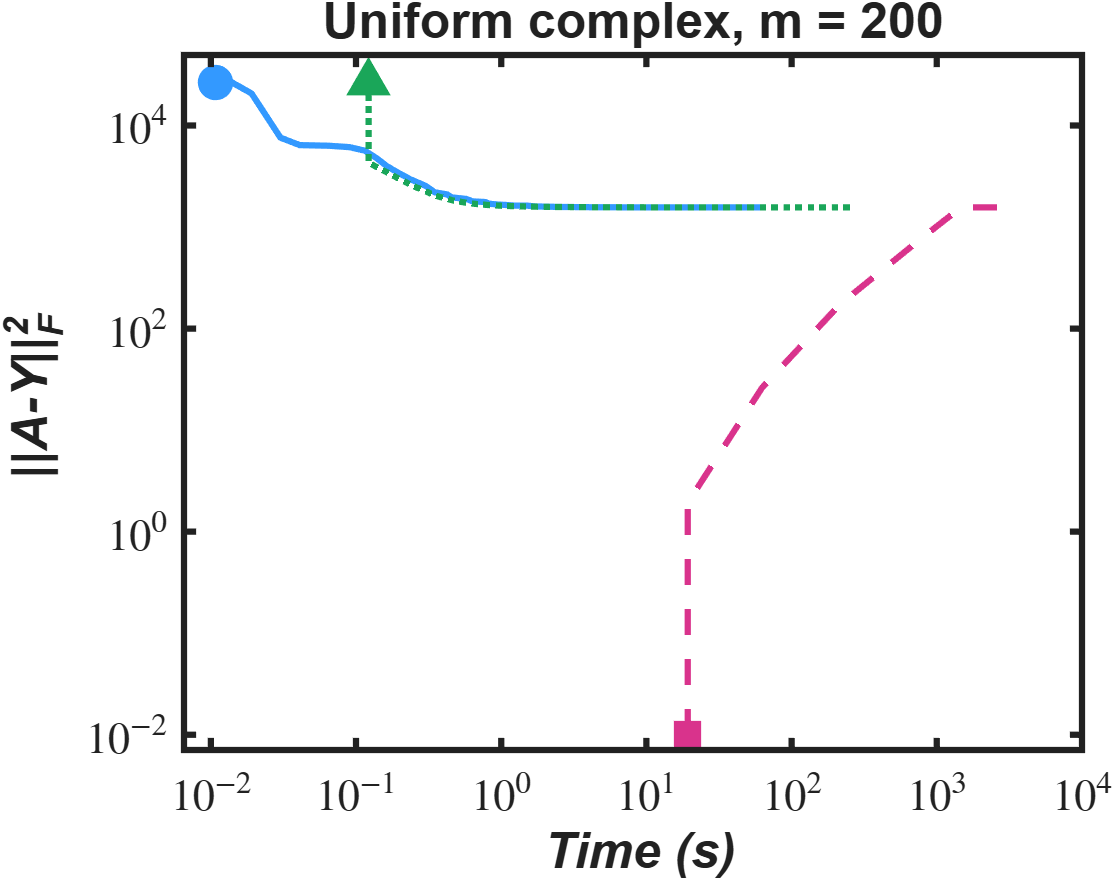}
\end{minipage}\hfill
\begin{minipage}[t]{0.45\textwidth}\vspace*{0pt}
    \includegraphics[width=\textwidth]{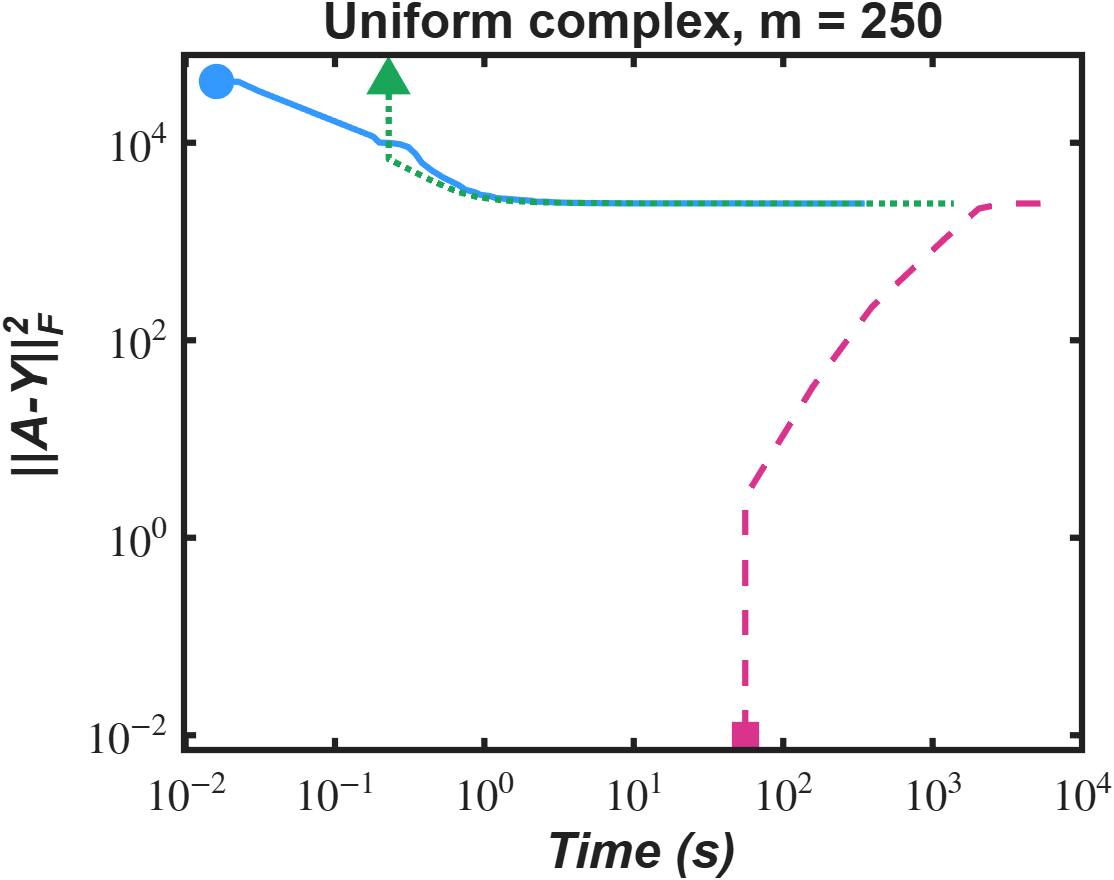}
\end{minipage}\hfill
\begin{minipage}[t]{0.10\textwidth}\vspace*{0pt}
    \phantom{\includegraphics[width=\textwidth]{convgKey.png}}
\end{minipage}
    \caption{Residual $\|A-Y\|_F^2$ (note $A$ is not strictly in $\mathcal{N}$) vs.~time to solve, for the matrices in Table~\ref{tab:scaling}.}\label{fig:complexScale}
\end{figure}

The stopping tolerance of Ruhe's algorithm could be adjusted so that it
converges in a comparable amount of time to ours on these examples, since the residual curves of Ruhe's algorithm in
Figure~\ref{fig:complexScale} appear to flatten at a similar point on the plots as
ours---suggesting that earlier termination may sacrifice little accuracy.  Regardless, a clear trend emerges on these examples: the time required for both Ruhe's method and Guglielmi and Scalone's to terminate scales at an evidently greater rate than ours as matrix size increases.

We summarize the computational cost of each method based on algorithmic structure and empirical evidence:

\begin{itemize}
    \item The computational cost of our method is dominated by a constant number of $m \times m$ matrix products---in forming the Euclidean gradient~\eqref{eq:euclidean_grad}, applying the Hessian~\eqref{eq:hess}, and retracting onto $\mathrm{U}(m)$---each of which costs $O(m^3)$. The associated worst-case bound on the number of iterates is not necessarily independent of the dimension according to \cite[Theorem~6.20]{boumal} (assuming the necessary assumptions are met), as it depends on the objective's initial optimality gap and the Lipschitz constant of its gradient, both of which may vary with $m$ through the construction of $Y$. In Table~\ref{tab:scaling}, a log--log fit of the iteration count against $m$ gives an exponent of approximately $0.7$, so that the observed wall-clock cost of our method scales between $O(m^3)$ and $O(m^4)$ on these examples.

    \item Guglielmi and Scalone's algorithm is computationally dominated by its inner gradient system; the outer iteration contributes only scalar work, since its derivative reuses the gradient already formed by the inner flow. The authors state, ``the main cost of an integration step of [its inner system] is given by the spectral decomposition of the matrix'' $Y + \varepsilon E$, an $O(m^3)$ operation, and as ``this computation has to be done several times, the method has a computational complexity depending on the number of such decompositions'' \cite[Section 5]{scalone}. Note that in Figures \ref{fig:complexLit} and \ref{fig:complexScale} we plot the initial point of their residual curve only after a full inner flow has completed, which is why as $m$ grows this point appears at a substantially later time than those of the other methods. In practice, the number of inner iterations is substantial; for example, in Table~\ref{tab:scaling}, we observe that for $m = 250$ their method performs $68833$ inner iterations against $30$ outer iterations.

    \item Ruhe's algorithm proceeds in sweeps, each applying a plane rotation to all $\binom{m}{2}$ off-diagonal pairs. Since a rotation modifies only two rows and two columns, it costs $O(m)$, so a full sweep costs $O(m^3)$. In Table~\ref{tab:scaling} the sweep count is somewhat noisy but increases with $m$---from $57$ at ${m=10}$ to $6516$ at $m=250$---and does so at an empirically faster rate than the iteration count of our method. Ruhe's algorithm is likely fastest on small matrices because each sweep consists of inexpensive row and column updates, whereas each of our iterations forms several dense $m \times m$ products and, consequently, has a larger constant associated with its $O(m^3)$ cost. As $m$ grows, however, the increasing gap between Ruhe's algorithm's sweep count and our solver's iteration count outweighs the effect of this low sweep cost---causing our method to surpass Ruhe's in wall-clock time on these examples.
\end{itemize}

\section{Reduction of the Real NPP}\label{sec:realRed}

Let $X,Y\in \M_{m\times n}(\R)$, and let $A\in \mathcal{N}_{\R}$. We now shift gears to address the Real NPP. The following lemma offers a useful characterization of real normal matrices, whose complex eigenvalues come in pairs.

\begin{lemma}[Theorem 2.5.8, {\cite{horn}}]\label{lem:Horn}
If $A\in \mathcal{N}_{\R}$ then there exists a real orthogonal $Q\in \mathrm{O}(m)$ such that $Q^{\mathsf{T}}AQ$ is a real quasidiagonal matrix
\begin{equation}\label{eq:realSum}
    \Delta := Q^{\mathsf{T}}AQ=A_1\oplus \cdots \oplus A_t \in \M_m(\R),\text{ each } A_i \text{ is } 1\times 1 \text{ or } 2\times 2
\end{equation}
with the following properties: 
\begin{enumerate}[label=(\alph*)]
\item The $1\times 1$ direct summands in \eqref{eq:realSum} display all real eigenvalues of $A$, and each $2\times 2$ direct summand in \eqref{eq:realSum} has the special form
\[\begin{bmatrix}
    a & b\\
    -b & a
\end{bmatrix},\]which is normal and has eigenvalues $a\pm ib$, and $b>0$.

\item The direct summands in \eqref{eq:realSum}, which may appear in any prescribed order, are completely determined by the eigenvalues of $A$, and their eigenvalues are collectively the eigenvalues of $A$.
\end{enumerate}
\end{lemma}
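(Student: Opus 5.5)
We prove the real normal form of Lemma~\ref{lem:Horn} by induction on $m$, peeling off one invariant subspace of dimension $1$ or $2$ at a time. The key fact is that for a real normal $A$, an eigenvector of $A$ is also an eigenvector of $A\T$, with the conjugate eigenvalue. To see this, suppose $Av=\lambda v$ with $v\in\C^m$. Then $A-\lambda I$ is normal, because it commutes with its adjoint $A\T-\bar\lambda I$. Therefore
\[
\|(A-\lambda I)v\|=\|(A\T-\bar\lambda I)v\|,
\]
so $A\T v=\bar\lambda v$.

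\textbf{Real eigenvalue.} If $A$ has a real eigenvalue $\lambda$, choose a real unit eigenvector $q$. By the fact above, $A\T q=\lambda q$. So both $\mathrm{span}(q)$ and $q^\perp$ are invariant under $A$. Complete $q$ to an orthogonal matrix $Q_1=[q\ \ Q']$. Then
\[
Q_1\T A Q_1=\lambda\oplus A', \qquad A'=Q'^{\mathsf T}AQ'.
\]
The block $A'$ is real and normal, since a block-diagonal matrix is normal exactly when its blocks are.

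\textbf{Non-real eigenvalue.} Otherwise take an eigenvalue $\lambda=a+ib$, with the sign of $b$ chosen so that $b>0$ (recall $\bar\lambda$ is also an eigenvalue). Let $v=x+iy$ with $x,y\in\R^m$ be an eigenvector. Since $\lambda\neq\bar\lambda$, the vectors $v$ and $\bar v$ are eigenvectors of the normal matrix $A$ for distinct eigenvalues, hence orthogonal: $v\T v=0$. Expanding,
\[
v\T v=\|x\|^2-\|y\|^2+2i\,x\T y=0,
\]
so $\|x\|=\|y\|$ and $x\perp y$. After scaling, $x,y$ are orthonormal. Comparing real and imaginary parts of $Av=\lambda v$ gives
\[
Ax=ax-by,\qquad Ay=bx+ay.
\]
In the basis $(x,y)$ the restriction of $A$ is therefore $\begin{bmatrix}a&b\\-b&a\end{bmatrix}$, as required. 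Applying the key fact to $v$ shows $A\T v=\bar\lambda v$, so $\mathrm{span}(x,y)$ is also $A\T$-invariant. Hence its orthogonal complement is $A$-invariant, and we split off the $2\times2$ block exactly as in the real case. Induction on the normal complement then produces $Q$ and the quasidiagonal form $\Delta$.

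\textbf{Properties (a) and (b).} Part (a) holds by construction: each $1\times1$ block is a real eigenvalue, and each $2\times2$ block has the stated form with $b>0$ and eigenvalues $a\pm ib$. For (b), $\Delta$ is similar to $A$, so its blocks' eigenvalues are collectively the eigenvalues of $A$ with multiplicity. Each block is determined by its eigenvalue (pair): $1\times1$ blocks by real eigenvalues, and $2\times2$ blocks by conjugate pairs with $b>0$ fixing the form. So the multiset of blocks is determined by the spectrum. Any prescribed order is achieved by conjugating $\Delta$ with a block permutation matrix, which is orthogonal.

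The main obstacle is the non-real step: proving $x$ and $y$ can be taken orthonormal and that $\mathrm{span}(x,y)^\perp$ is invariant. Both follow from the key fact that eigenvectors of a normal matrix for distinct eigenvalues are orthogonal and are shared with the adjoint. This is the one place normality is essential; everything else is bookkeeping.
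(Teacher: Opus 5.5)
Your proof is correct. The paper itself gives no proof of this lemma; it imports it from Horn--Johnson (Theorem 2.5.8), so the comparison is with the standard textbook route you are implicitly replacing.

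That route goes through the real Schur form. First, $A$ is brought by a real orthogonal similarity to upper quasitriangular form, with $1\times1$ and $2\times2$ diagonal blocks. Next, normality of a block upper triangular matrix forces every off-diagonal block to vanish. Finally, a direct computation shows that a real normal $2\times2$ matrix with non-real eigenvalues must be of the form $\begin{bmatrix} a & b\\ -b & a\end{bmatrix}$. The sign of $b$ is then fixed by conjugating with $\mathrm{diag}(1,-1)$.

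You instead build the decomposition directly by induction on invariant subspaces. Your tool is the fact that $A$ and $A^{\mathsf T}$ share eigenvectors, with conjugate eigenvalues. This buys you two things at once: orthonormality of $\operatorname{Re} v$ and $\operatorname{Im} v$ (after scaling) falls out of $v\perp\bar v$, and the block form with the same $b$ as in $\lambda=a+ib$ comes from the real and imaginary parts of $Av=\lambda v$. No triangularization theorem or separate $2\times2$ normality computation is needed. The Schur route is shorter if the real Schur form is already in hand, and it presents the result as a corollary of a general structure theorem.

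Two small points deserve a sentence each in your write-up:
\begin{enumerate}
\item A real eigenvalue $\lambda$ has a real unit eigenvector because $A-\lambda I$ is a singular real matrix.
\item The claim in (a) that the $1\times1$ summands display \emph{all} real eigenvalues, with multiplicity, is not quite ``by construction.'' It needs the characteristic-polynomial count you give in (b), together with the observation that each $2\times2$ block contributes only the non-real pair $a\pm ib$ with $b>0$.
\end{enumerate}
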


Following Lemma~\ref{lem:Horn}, write $A=Q\Delta Q^{\mathsf{T}}$, and let the block diagonal $\Delta$ consist of $\lfloor m/2 \rfloor$ blocks of the form
\[
\Delta_i \in \Psi \cup \Omega, \quad \text{for } i = 1, \ldots, \lfloor m/2\rfloor,
\]
where
\[
\Psi := \left\{ \begin{bmatrix} a & b \\ -b & a \end{bmatrix} \right\},~
\Omega := \left\{ \begin{bmatrix} \lambda' & 0 \\ 0 & \lambda'' \end{bmatrix} \right\} \subseteq \M_2(\R),
\]
and a singleton block $\begin{bmatrix}
\lambda_m
\end{bmatrix}\in \M_1(\R)$, if $m$ is odd. Note that $\lambda'$ and $\lambda''$ may be distinct. 
\begin{rmk}
Lemma~\ref{lem:Horn} ensures $b>0$, which can always be attained by performing a similarity via the matrix $\begin{bmatrix}1 & 0 \\ 0 & -1\end{bmatrix}$. For our definition of $\Psi$ we drop this normalization; the sign of $b$ is absorbed into $Q$, so the set of
attainable residuals in \eqref{eq:real_block_decomp} is unchanged.
\end{rmk}

Then
\begin{equation}\label{eq:real_block_decomp}
\begin{aligned}
\|AX-Y\|_F^2 &= \|\Delta Q^{\mathsf{T}}X - Q^{\mathsf{T}}Y\|_F^2 \\
&= \sum_{i=1}^{\lfloor m/2\rfloor} \left\|\Delta_i \begin{bmatrix} (Q^{\mathsf{T}}X)_{2i-1} \\ (Q^{\mathsf{T}}X)_{2i} \end{bmatrix} - \begin{bmatrix} (Q^{\mathsf{T}}Y)_{2i-1} \\ (Q^{\mathsf{T}}Y)_{2i} \end{bmatrix}\right\|_F^2 \\
&~~~+ \|\lambda_m (Q^{\mathsf{T}}X)_m - (Q^{\mathsf{T}}Y)_m\|_F^2,
\end{aligned}
\end{equation}
with the final term present only if $m$ is odd. 
Since the terms in~\eqref{eq:real_block_decomp} are decoupled across $i$, we
minimize over each block independently. Before we do so, we redefine
\begin{align*}
\phi_i := \|(Q\T X)_i\|_F^2, \quad \text{and}\quad \gamma_i := (Q \T Y)_i(Q\T X)_i\T
\end{align*}
to be over the reals for the remainder of this paper. We also introduce a few new terms to shorten the following lemma. Let
\begin{align*}
\alpha_i &:=  (Q\T X)_{2i-1}(Q\T Y)_{2i-1}\T + (Q\T X)_{2i}(Q\T Y)_{2i}\T,\\
\beta_i&:= (Q\T X)_{2i}(Q\T Y)_{2i-1}\T - (Q\T X)_{2i-1}(Q\T Y)_{2i}\T.
\end{align*}
Further define the residual of the $i$th block term
\begin{equation}\label{eq:rho}\rho(\Delta_i):= \left\|\Delta_i \begin{bmatrix} (Q^{\mathsf{T}}X)_{2i-1} \\ (Q^{\mathsf{T}}X)_{2i} \end{bmatrix} - \begin{bmatrix} (Q^{\mathsf{T}}Y)_{2i-1} \\ (Q^{\mathsf{T}}Y)_{2i} \end{bmatrix}\right\|_F^2.\end{equation}

Thus, we arrive at:

\begin{lemma}\label{lem:optDelt}
  For fixed $Q \in \mathrm{O}(m)$, a global minimizer of~\eqref{eq:real_block_decomp}
  over the real block diagonal $\Delta\in \Psi \cup \Omega$ is given by
  \[\Delta^\star_Q = \begin{cases}
  \mathrm{diag}\left(\Delta_1^\star,\ldots,\Delta_{\lfloor m/2\rfloor}^\star\right), & \text{if } m \text{ is even},\\[8pt]
   \mathrm{diag}\left(\Delta_1^\star,\ldots,\Delta_{\lfloor m/2\rfloor}^\star, \lambda_m^\star\right), & \text{if } m \text{ is odd}.
  \end{cases} \]
    
  For each
  $i = 1,\ldots,\lfloor m/2\rfloor$,
  \begin{equation}\label{eq:optBlock}
      \Delta_i^\star = \begin{cases}
      \begin{bmatrix} a_i^\star & b_i^\star \\ -b_i^\star & a_i^\star \end{bmatrix}, & \text{if }\rho\left(\begin{bmatrix} a_i^\star & b_i^\star \\ -b_i^\star & a_i^\star \end{bmatrix}\right) < \rho\left(\begin{bmatrix} \lambda_{2i-1}^\star & 0 \\ 0 & \lambda_{2i}^\star \end{bmatrix}\right),\\[16pt]
      \begin{bmatrix} \lambda_{2i-1}^\star & 0 \\ 0 & \lambda_{2i}^\star \end{bmatrix} & \text{if } \rho\left(\begin{bmatrix} a_i^\star & b_i^\star \\ -b_i^\star & a_i^\star \end{bmatrix}\right)\geq \rho\left(\begin{bmatrix} \lambda_{2i-1}^\star & 0 \\ 0 & \lambda_{2i}^\star \end{bmatrix}\right),
      \end{cases}
  \end{equation}
  where
  \begin{equation}\label{eq:optRot}
       (a_i^\star,  b_i^\star) =
      \begin{cases}
          \left(\dfrac{\alpha_i}{\phi_{2i-1}+\phi_{2i}},\
                \dfrac{\beta_i}{\phi_{2i-1}+\phi_{2i}}\right) & \text{if } \phi_{2i-1}+\phi_{2i} \neq 0, \\[8pt]
          (0,0) & \text{if } \phi_{2i-1}+\phi_{2i} = 0,
      \end{cases}
  \end{equation}
  and for $j \in \{2i-1,\, 2i\}$,
  \begin{equation}\label{eq:optDiag}
      \lambda_j^\star =
      \begin{cases}
          \dfrac{\gamma_j}{\phi_j} & \text{if } (Q\T X)_j \neq 0, \\[8pt]
          0 & \text{if } (Q\T X)_j = 0.
      \end{cases}
  \end{equation}

  If $m$ is odd,
  \begin{equation}\label{eq:optLast}
      \lambda_m^\star =
      \begin{cases}
          \dfrac{\gamma_m}{\phi_m} & \text{if } (Q\T X)_m \neq 0, \\[8pt]
          0 & \text{if } (Q\T X)_m = 0.
      \end{cases}
  \end{equation}
\end{lemma}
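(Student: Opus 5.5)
Since \eqref{eq:real_block_decomp} is a sum of terms, each depending on a single block $\Delta_i$ (plus the singleton $\lambda_m$ when $m$ is odd), and the admissible set for $\Delta$ is the product of the per-block sets $\Psi\cup\Omega$ (and $\M_1(\R)$ for the singleton), I will minimize each term separately and assemble $\Delta^\star_Q$ from the blockwise minimizers. For a fixed $i$, write $x_1=(Q\T X)_{2i-1}$, $x_2=(Q\T X)_{2i}$, $y_1=(Q\T Y)_{2i-1}$, $y_2=(Q\T Y)_{2i}$. The minimum of $\rho$ over $\Psi\cup\Omega$ is the smaller of the minimum over $\Psi$ and the minimum over $\Omega$. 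Therefore it suffices to show that $(a_i^\star,b_i^\star)$ in \eqref{eq:optRot} minimizes $\rho$ on $\Psi$ and that $(\lambda_{2i-1}^\star,\lambda_{2i}^\star)$ in \eqref{eq:optDiag} minimizes $\rho$ on $\Omega$. The comparison rule in \eqref{eq:optBlock} then selects a global minimizer on the union; ties are harmless since either choice attains the minimum.

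For the $\Omega$ case, $\rho$ splits as $\|\lambda' x_1-y_1\|_F^2+\|\lambda'' x_2-y_2\|_F^2$. Each piece is exactly the real analogue of the scalar problem in Lemma~\ref{lem:optD}: complete the square to get $\phi_j(\lambda-\gamma_j/\phi_j)^2-\gamma_j^2/\phi_j+\|y_j\|_F^2$ when $x_j\neq 0$, and note the term is independent of $\lambda$ when $x_j=0$. This gives \eqref{eq:optDiag}. The odd singleton in \eqref{eq:optLast} is handled identically.

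For the $\Psi$ case, expand
\[
\rho=\|a x_1+b x_2-y_1\|_F^2+\|-b x_1+a x_2-y_2\|_F^2.
\]
This is a quadratic in $(a,b)$. The key step is to check that the pure quadratic part collapses: the cross terms $2ab\,x_1x_2\T$ and $-2ab\,x_1x_2\T$ cancel, so the quadratic part equals $(a^2+b^2)(\phi_{2i-1}+\phi_{2i})$. The linear part is $-2a(x_1y_1\T+x_2y_2\T)-2b(x_2y_1\T-x_1y_2\T)=-2a\alpha_i-2b\beta_i$, using symmetry of real inner products. Setting $s=\phi_{2i-1}+\phi_{2i}$, completing the square gives
\[
s\bigl[(a-\alpha_i/s)^2+(b-\beta_i/s)^2\bigr]-(\alpha_i^2+\beta_i^2)/s+\|y_1\|_F^2+\|y_2\|_F^2,
\]
whose minimizer is \eqref{eq:optRot} when $s>0$. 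When $s=0$, both $x_1$ and $x_2$ vanish, so $\rho$ is constant and $(0,0)$ is a valid choice.

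The only step needing care is this cancellation of the $ab$ cross terms, which makes the $\Psi$ problem isotropic and yields a closed form. The other point to keep honest is that the blocks genuinely decouple, including the freedom to pick $\Psi$ or $\Omega$ independently per block; this is what justifies the blockwise comparison. Combining all blocks yields $\Delta^\star_Q$ as stated.
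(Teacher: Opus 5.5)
Your proposal is correct and follows essentially the same route as the paper's proof. Both decouple the residual blockwise and reduce the minimization over $\Psi\cup\Omega$ to comparing the separate minimizers on each set. Both handle $\Omega$ and the odd singleton by the scalar completion of squares from Lemma~\ref{lem:optD}. For $\Psi$, both expand, cancel the $ab$ cross terms to obtain $(\phi_{2i-1}+\phi_{2i})(a^2+b^2)-2a\alpha_i-2b\beta_i+\text{const}$, and complete the square. The degenerate case $\phi_{2i-1}+\phi_{2i}=0$ and ties are treated just as in the paper.
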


\begin{proof}The case~\eqref{eq:optLast} is identical to Lemma~\ref{lem:optD} restricted to the reals. For the block form, $\Delta_i$ is in the union of two sets of matrices. Thus, if 
    \[ \Psi_i^\star:=\underset{\Delta_i \in \Psi}{\argmin}\ \rho(\Delta_i) = \begin{bmatrix} a_i^\star & b_i^\star \\ -b_i^\star & a_i^\star \end{bmatrix} 
\quad\text{and}\quad\Omega_i^\star:=
\underset{\Delta_i \in \Omega}{\argmin}\ \rho(\Delta_i) = \begin{bmatrix} \lambda_{2i-1}^\star & 0 \\ 0 & \lambda_{2i}^\star \end{bmatrix},\]
    then
\[\Delta_i^\star=\underset{\Delta_i \in {\Psi \cup \Omega}}{\argmin}\rho(\Delta_i)=\underset{\Delta_i \in \{\Psi_i^\star, \Omega_i^\star\}}{\argmin}\rho(\Delta_i),\]
with \eqref{eq:optBlock} selecting the diagonal form in case of a tie. $\Omega_i^\star$ follows exactly as in Lemma~\ref{lem:optD}, since the optimal diagonal values are decoupled within each block.  

It remains to be shown then that $\Psi_i^\star$ is as in \eqref{eq:optRot}. Hence, we allow $\Delta_i\in \Psi$ and compute

\begin{equation}\label{eq:realResid}
    \begin{aligned}\rho\left(\Delta_i\right)&=\|a_i(Q\T X)_{2i-1} + b_i(Q\T X)_{2i} - (Q\T Y)_{2i-1}\|_F^2 \\
&~~~+ \|-b_i(Q\T X)_{2i-1} + a_i(Q\T X)_{2i} - (Q\T Y)_{2i}\|_F^2.\end{aligned}
    \end{equation}
Expanding each term:
\begin{align*}
&\|a_i(Q\T X)_{2i-1} + b_i(Q\T X)_{2i} - (Q\T Y)_{2i-1}\|_F^2 \\
&\quad= a_i^2\|(Q\T X)_{2i-1}\|_F^2 + b_i^2\|(Q\T X)_{2i}\|_F^2 
+ 2a_ib_i(Q\T X)_{2i-1}(Q\T X)_{2i}\T \\
&\qquad - 2a_i(Q\T X)_{2i-1}(Q\T Y)_{2i-1}\T 
- 2b_i(Q\T X)_{2i}(Q\T Y)_{2i-1}\T 
+ \|(Q\T Y)_{2i-1}\|_F^2, \\[6pt]
&\|-b_i(Q\T X)_{2i-1} + a_i(Q\T X)_{2i} - (Q\T Y)_{2i}\|_F^2 \\
&\quad= b_i^2\|(Q\T X)_{2i-1}\|_F^2 + a_i^2\|(Q\T X)_{2i}\|_F^2 
- 2a_ib_i(Q\T X)_{2i-1}(Q\T X)_{2i}\T \\
&\qquad - 2a_i(Q\T X)_{2i}(Q\T Y)_{2i}\T 
+ 2b_i(Q\T X)_{2i-1}(Q\T Y)_{2i}\T 
+ \|(Q\T Y)_{2i}\|_F^2.
\end{align*}
The terms involving $(Q\T X)_{2i-1}(Q\T X)_{2i}\T$ cancel upon summing, and we obtain:
\begin{align*}
\rho\left(\Delta_i\right)&=(a_i^2+b_i^2)\bigl(\|(Q\T X)_{2i-1}\|_F^2+\|(Q\T X)_{2i}\|_F^2\bigr) \\
&\quad - 2a_i\bigl((Q\T X)_{2i-1}(Q\T Y)_{2i-1}\T + (Q\T X)_{2i}(Q\T Y)_{2i}\T\bigr) \\
&\quad - 2b_i\bigl((Q\T X)_{2i}(Q\T Y)_{2i-1}\T - (Q\T X)_{2i-1}(Q\T Y)_{2i}\T\bigr) \\
&\quad + \|(Q\T Y)_{2i-1}\|_F^2 + \|(Q\T Y)_{2i}\|_F^2,
\end{align*}
which further reduces to:
\begin{equation}\label{eq:block_expanded}
\begin{aligned}
(\phi_{2i-1}+\phi_{2i})(a_i^2+b_i^2)
- 2a_i\alpha_i- 2b_i\beta_i + \|(Q\T Y)_{2i-1}\|_F^2 + \|(Q\T Y)_{2i}\|_F^2. 
\end{aligned}
\end{equation}

If $\phi_{2i-1}+\phi_{2i} = 0$, then $(Q\T X)_{2i-1} = (Q\T X)_{2i} = 0$, and \eqref{eq:block_expanded} is independent of $(a_i, b_i)$; set $a_i^\star = b_i^\star = 0$. 
Otherwise, $0<\phi_{2i-1}+\phi_{2i}$, and we complete the square to obtain:
\begin{align*}
      \rho\left(\Delta_i\right)&=(\phi_{2i-1}+\phi_{2i})\left(\left(a_i-\dfrac{\alpha_i}{\phi_{2i-1}+\phi_{2i}}\right)^2+\left(b_i-\dfrac{\beta_i}{\phi_{2i-1}+\phi_{2i}}\right)^2\right) \\
     & ~~~-  \dfrac{\alpha_i^2+\beta_i^2}{\phi_{2i-1}+\phi_{2i}}+ \|(Q\T Y)_{2i-1}\|_F^2 + \|(Q\T Y)_{2i}\|_F^2.
\end{align*}
   Since the first term is nonnegative and the remaining terms are independent of $a_i$ and $b_i$, the above is minimized if and only if $(a_i,b_i)=(a_i^\star,b_i^\star)$ as in \eqref{eq:optRot}.
 \end{proof}

 As in the complex case, we write $\Delta^\star_Q$ for the assembled optimal quasidiagonal to emphasize its dependence on $Q$, while the $i$th block is written $\Delta_i^\star$ and the scalars $\phi_i$, $\gamma_i$, $\alpha_i$, $\beta_i$, $a_i^\star$, $b_i^\star$, and $\lambda_j^\star$ carry the same dependence implicitly. We can now state our reduced objective:

\begin{theorem}\label{thm:realreduced}
  The Real Normal Procrustes Problem is equivalent to determining
  \begin{equation}\label{eq:Qminimize}
      Q^\star \in \underset{Q \in \mathrm{O}(m)}{\argmax}\, g(Q),
  \end{equation}
  where
  \begin{equation}\label{eq:gQ}
    g(Q) = \sum_{i=1}^{\lfloor m/2\rfloor}
      \max\!\left\{
        \frac{\alpha_i^2 + \beta_i^2}{\phi_{2i-1}+\phi_{2i}},\;
        \frac{\gamma_{2i-1}^2}{\phi_{2i-1}} + \frac{\gamma_{2i}^2}{\phi_{2i}}
      \right\}
      + \frac{\gamma_m^2}{\phi_m}=\|\Delta^\star_Q Q\T X\|_F^2,
  \end{equation}
  with the convention that any term with a vanishing denominator equals zero, and the final
  term present only if $m$ is odd. An optimal $A^\star = Q^\star \Delta^\star_{Q^\star} (Q^\star)^{\mathsf T}$
  is real and normal, where $\Delta^\star_{Q^\star}$ is given by Lemma~\ref{lem:optDelt}.
\end{theorem}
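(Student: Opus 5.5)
The plan is to mirror the proof of Theorem~\ref{thm:reduced}, working block by block through \eqref{eq:real_block_decomp}. By Lemma~\ref{lem:Horn}, every $A\in\mathcal{N}_\R$ can be written as $A=Q\Delta Q\T$ with $Q\in\mathrm{O}(m)$ and $\Delta$ quasidiagonal. We may reorder the summands so that the $2\times 2$ blocks come first. Adjacent pairs of $1\times 1$ blocks are then grouped into elements of $\Omega$, and at most one $1\times1$ block is left over, placed last when $m$ is odd. Since $m-(\text{number of real eigenvalues})$ is even, the parity works out: when $m$ is even the number of real eigenvalues is even, and when $m$ is odd it is odd. So every real normal $A$ has the form $Q\Delta Q\T$ with each $\Delta_i\in\Psi\cup\Omega$. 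Conversely, any such $Q\Delta Q\T$ is real and normal, because blocks in $\Psi$ and $\Omega$ are normal and an orthogonal similarity of a normal matrix is normal. Hence
\[
\inf_{A\in\mathcal{N}_\R}\|AX-Y\|_F^2=\inf_{Q\in \mathrm{O}(m)}\ \inf_{\Delta}\|\Delta Q\T X-Q\T Y\|_F^2 .
\]
Lemma~\ref{lem:optDelt} evaluates the inner infimum, which is attained at $\Delta^\star_Q$.

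Next, I would compute the optimal block residuals explicitly. The completed square in the proof of Lemma~\ref{lem:optDelt} gives
\[
\rho(\Psi_i^\star)=\|(Q\T Y)_{2i-1}\|_F^2+\|(Q\T Y)_{2i}\|_F^2-\frac{\alpha_i^2+\beta_i^2}{\phi_{2i-1}+\phi_{2i}}.
\]
The same computation as in Theorem~\ref{thm:reduced}, applied to each row separately, gives
\[
\rho(\Omega_i^\star)=\|(Q\T Y)_{2i-1}\|_F^2+\|(Q\T Y)_{2i}\|_F^2-\frac{\gamma_{2i-1}^2}{\phi_{2i-1}}-\frac{\gamma_{2i}^2}{\phi_{2i}}.
\]
Both hold with the zero-denominator convention. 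Taking the minimum of the two block residuals amounts to subtracting the maximum of the two subtracted quantities. The odd singleton contributes $\|(Q\T Y)_m\|_F^2-\gamma_m^2/\phi_m$. Summing over the blocks and using orthogonal invariance, $\sum_j\|(Q\T Y)_j\|_F^2=\|Y\|_F^2$, we obtain
\[
\min_\Delta\|AX-Y\|_F^2=\|Y\|_F^2-g(Q).
\]
Minimizing this over $\mathrm{O}(m)$ is the same as maximizing $g$. An optimal $Q^\star$ therefore gives the real normal $A^\star=Q^\star\Delta^\star_{Q^\star}(Q^\star)\T$.

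Finally, I would verify the identity $g(Q)=\|\Delta^\star_Q Q\T X\|_F^2$. The cleanest route is the orthogonality relation $\|\Delta Z-W\|_F^2=\|W\|_F^2-\|\Delta Z\|_F^2$, which holds at the minimizer $\Delta=\Delta^\star$ over each linear family, with $Z=Q\T X$ and $W=Q\T Y$. This is the Pythagorean identity: on each block, $\Psi$ and $\Omega$ are linear subspaces, the map $\Delta_i\mapsto \Delta_i Z_i$ is linear, and the optimum is an orthogonal projection of $W_i$ onto the image. Hence $\langle \Delta^\star_i Z_i,\Delta^\star_i Z_i-W_i\rangle=0$, so $\rho(\Delta_i^\star)=\|W_i\|_F^2-\|\Delta_i^\star Z_i\|_F^2$. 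Comparing with the previous paragraph yields the identity block by block. Alternatively, one can expand directly: $(a^2+b^2)(\phi_{2i-1}+\phi_{2i})=(\alpha_i^2+\beta_i^2)/(\phi_{2i-1}+\phi_{2i})$. This needs the cross terms to cancel, as in the proof of Lemma~\ref{lem:optDelt}.

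The main obstacle is the first step: showing that the family $\{Q\Delta Q\T\}$ with blocks in $\Psi\cup\Omega$ is exactly $\mathcal{N}_\R$. This requires the parity bookkeeping above, the freedom in Lemma~\ref{lem:Horn}(b) to order the summands, and the remark that dropping the sign constraint $b>0$ does not enlarge the set. The degenerate cases with vanishing denominators also need checking; they follow the convention exactly as in Theorem~\ref{thm:reduced}.
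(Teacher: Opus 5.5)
Your proposal is correct and follows the paper's own argument. The paper likewise substitutes the optimal blocks from Lemma~\ref{lem:optDelt} into \eqref{eq:real_block_decomp}, writes each block residual as the squared norm of the corresponding rows of $Q^{\mathsf{T}}Y$ minus the larger of the two subtracted quantities, then sums and uses orthogonal invariance to obtain $\|Y\|_F^2-g(Q)$. Where you go further, your parity check that the $\Psi\cup\Omega$ parametrization exhausts $\mathcal{N}_{\R}$ and your orthogonal-projection proof of $g(Q)=\|\Delta^\star_Q Q^{\mathsf{T}}X\|_F^2$ are sound; they fill in points the paper leaves implicit rather than change the route.
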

\begin{proof}
  Substituting~\eqref{eq:optBlock} into the $i$th block term of~\eqref{eq:real_block_decomp}, 
  the residual for the $i$th block yields:
\begin{align*}
       \left\| \begin{bmatrix} (Q^{\mathsf{T}}Y)_{2i-1} \\ (Q^{\mathsf{T}}Y)_{2i} \end{bmatrix}\right\|_F^2-\max\!\left\{
        \frac{\alpha_i^2 + \beta_i^2}{\phi_{2i-1}+\phi_{2i}},\;
        \frac{\gamma_{2i-1}^2}{\phi_{2i-1}} + \frac{\gamma_{2i}^2}{\phi_{2i}}
      \right\},
\end{align*}
  and if $m$ is odd, \eqref{eq:optLast} yields:
  \begin{align*}
       \left\| (Q^{\mathsf{T}}Y)_{m}\right\|_F^2-\frac{\gamma_m^2}{\phi_m}.
\end{align*}
  
Hence, upon summing the above terms, we arrive at the final result:
  \[
  \sum_{i=1}^{\lfloor m/2\rfloor}\left\| \begin{bmatrix} (Q^{\mathsf{T}}Y)_{2i-1} \\ (Q^{\mathsf{T}}Y)_{2i} \end{bmatrix}\right\|_F^2
  \;+\; \|(Q\T Y)_m\|_F^2
  \;-\; g(Q)
  = \|Q\T Y\|_F^2 - g(Q) = \|Y\|_F^2 - g(Q),
  \]
  where the second equality holds by orthogonal invariance of the Frobenius norm. Since $\|Y\|_F^2$ is
  constant, minimizing over $\mathrm{O}(m)$ is equivalent to determining~\eqref{eq:Qminimize}.
\end{proof}

\begin{example}
    Observe that $g$, as in the complex version, is also not geodesically concave in general. In fact, we can use the same matrices from Example~\ref{ex:geodesicComplex} to show that along the same geodesic $c(t)$,
  \begin{align*}
      g(c(t)) = \max\!\left\{
        \frac{\alpha_1^2 + \beta_1^2}{\phi_1+\phi_2},\;
        \frac{\gamma_1^2}{\phi_1} + \frac{\gamma_2^2}{\phi_2}
      \right\}
      &= \max\!\left\{\tfrac12,\; \cos^4\!\left(\tfrac{t\pi}{2}\right) + \sin^4\!\left(\tfrac{t\pi}{2}\right)\right\}\\
      &= \cos^4\!\left(\tfrac{t\pi}{2}\right) + \sin^4\!\left(\tfrac{t\pi}{2}\right),
  \end{align*}
  since $\cos^4\!\left(\tfrac{t\pi}{2}\right) + \sin^4\!\left(\tfrac{t\pi}{2}\right) \geq \tfrac12$, so the rest of Example~\ref{ex:geodesicComplex} follows as before.
\end{example}

\section{Computing the Gradient and Hessian: Real Case}\label{sec:realGradient}

We continue in a manner analogous to Section~\ref{sec:complexGradient}. The discussion of the $\{\phi_i = 0\}$ locus there applies verbatim to $g$, restricted to the reals; this argument also applies when the $\Psi$ branch is the active form of some block $\Delta_i^\star$ and the denominator vanishes. The real objective, however, carries a second source of non-smoothness with no complex analogue. Since $g$ is defined in~\eqref{eq:gQ} through a pointwise maximum of two smooth expressions, it need not be differentiable where the two are equal, i.e.\ on the \emph{tie locus}
\[
\mathcal{T} := \left\{ Q \in \mathrm{O}(m) \;\middle|\; \frac{\alpha_i^2 + \beta_i^2}{\phi_{2i-1}+\phi_{2i}} = \frac{\gamma_{2i-1}^2}{\phi_{2i-1}} + \frac{\gamma_{2i}^2}{\phi_{2i}} \ \text{ for some } i \right\},
\]
across which the active form of some block $\Delta_i^\star$ may switch between branches of~\eqref{eq:optBlock}. Off $\mathcal{T}$, and provided all relevant denominators are nonzero, exactly one branch attains each maximum and the active form of every block is locally constant, so $g$ is smooth and we may differentiate it (and similarly obtain its Hessian) branchwise on this open set.

As with the $\{\phi_i = 0\}$ locus, $\mathcal{T}$ does not affect the initialization of our Riemannian trust-region solver in practice. For each block, the difference of the two branch expressions is real-analytic in $Q$ on each connected component of the set in $\mathrm{O}(m)$ where the relevant denominators are nonzero. If this difference vanishes identically on such a component, the two branches coincide and $g$ is smooth on that block, so that block contributes no non-differentiability. Otherwise, its zero set has measure zero \cite{mityagin}. Hence the set on which $g$ fails to be smooth---together with the set where relevant denominators vanish---is contained in a set of measure zero; a random initialization avoids it with probability $1$. Whether the trajectory of our solver always avoids $\mathcal{T}$ at its iterates, we do not attempt to prove here, but we observe no such phenomenon in Section~\ref{sec:realNum}.

Viewing $\mathrm{O}(m)$ as a Riemannian submanifold of the Euclidean space\linebreak ${\R^{m^2}\cong \M_m(\R)}$, equipped with the inner product ${\langle A, B\rangle = \operatorname{tr}(A^{\mathsf{T}}B)}$, the Riemannian gradient $\mathrm{grad}_Q\,g$ is obtained by projecting the Euclidean gradient $\nabla_Q g$ onto the tangent space $T_Q\mathrm{O}(m)$.

We proceed by computing $\nabla_Q g$. As in Section~\ref{sec:complexGradient}, we take $W\in\M_m(\R)$ for the Euclidean gradient and Hessian, and $W\in T_Q\mathrm{O}(m)$ for the Riemannian gradient and Hessian. Let $\epsilon\in \R$. We redefine $\delta(h) := \frac{d}{d\epsilon}h(Q+\epsilon W)\big|_{\epsilon=0}$. We thus compute:
\[
\delta(\phi_j) = 2\,(Q\T X)_j (W\T X)_j\T, \qquad
\delta(\gamma_j) = (W\T Y)_j (Q\T X)_j\T + (Q\T Y)_j (W\T X)_j\T,\]
\begin{align*}
\delta(\alpha_i) &= (W\T X)_{2i-1}(Q\T Y)_{2i-1}\T + (Q\T X)_{2i-1}(W\T Y)_{2i-1}\T\\
                 &~~~
                 + (W\T X)_{2i}(Q\T Y)_{2i}\T + (Q\T X)_{2i}(W\T Y)_{2i}\T,\\
\delta(\beta_i) &= (W\T X)_{2i}(Q\T Y)_{2i-1}\T + (Q\T X)_{2i}(W\T Y)_{2i-1}\T\\
                 &~~~
                 - (W\T X)_{2i-1}(Q\T Y)_{2i}\T - (Q\T X)_{2i-1}(W\T Y)_{2i}\T.
\end{align*}
Differentiating $g$ branchwise, the $i$th block term of~\eqref{eq:gQ} then satisfies
\begin{align*}
\delta\!\left(\frac{\alpha_i^2+\beta_i^2}{\phi_{2i-1}+\phi_{2i}}\right)
&= \frac{2\alpha_i\,\delta(\alpha_i) + 2\beta_i\,\delta(\beta_i)}{\phi_{2i-1}+\phi_{2i}}
- \frac{\alpha_i^2+\beta_i^2}{\bigl(\phi_{2i-1}+\phi_{2i}\bigr)^2}\bigl(\delta(\phi_{2i-1})+\delta(\phi_{2i})\bigr)\\
&= 2a_i^\star\,\delta(\alpha_i) + 2b_i^\star\,\delta(\beta_i)
- \bigl((a_i^\star)^2+(b_i^\star)^2\bigr)\bigl(\delta(\phi_{2i-1})+\delta(\phi_{2i})\bigr)
\end{align*}
when the first form is active, and
\begin{align*}
\delta\!\left(\frac{\gamma_{2i-1}^2}{\phi_{2i-1}}+\frac{\gamma_{2i}^2}{\phi_{2i}}\right)
&= \sum_{j\in\{2i-1,\,2i\}} \left(\frac{2\gamma_j\,\delta(\gamma_j)}{\phi_j} - \frac{\gamma_j^2}{\phi_j^2}\,\delta(\phi_j)\right)\\
&= \sum_{j\in\{2i-1,\,2i\}} \left(2\lambda_j^\star\,\delta(\gamma_j) - (\lambda_j^\star)^2\,\delta(\phi_j)\right)
\end{align*}
when the second is active---with the singleton term (present if $m$ is odd) obeying the latter with $j=m$. Substituting the derivatives computed above and grouping by the factor carrying $W$, each block contribution takes the common form:
\begin{align*}
    \delta(g)
    &= 2\sum_{i} \Bigg( \underbrace{\operatorname{tr}\!\left[\Delta_i^{\star\T} \begin{bmatrix} (Q\T Y)_{2i-1} \\ (Q\T Y)_{2i} \end{bmatrix} \begin{bmatrix} (W\T X)_{2i-1} \\ (W\T X)_{2i} \end{bmatrix}\T\right]}_{\text{(A)}}\\
     &~~~+ \underbrace{\operatorname{tr}\!\left[\Delta_i^\star \begin{bmatrix} (Q\T X)_{2i-1} \\ (Q\T X)_{2i} \end{bmatrix} \begin{bmatrix} (W\T Y)_{2i-1} \\ (W\T Y)_{2i} \end{bmatrix}\T\right]}_{\text{(B)}}\\
     &~~~- \underbrace{\operatorname{tr}\!\left[\Delta_i^{\star\T}\Delta_i^\star \begin{bmatrix} (Q\T X)_{2i-1} \\ (Q\T X)_{2i} \end{bmatrix} \begin{bmatrix} (W\T X)_{2i-1} \\ (W\T X)_{2i} \end{bmatrix}\T\right]}_{\text{(C)}}\Bigg),
\end{align*}
where the sum runs over the blocks of $\Delta^\star_Q$ (with $\Delta_i^\star = \lambda_m^\star$ and the stacked rows replaced by the single row $m$ for the singleton, if present). Since $\Delta^\star_Q$ is block diagonal, via the same trace identities in Section~\ref{sec:complexGradient}, we sum over the blocks to obtain:
\begin{align*}
&\text{(A):}\quad \sum_i \operatorname{tr}\!\left[\Delta_i^{\star\T} \begin{bmatrix}(Q\T Y)_{2i-1}\\(Q\T Y)_{2i}\end{bmatrix}\begin{bmatrix}(W\T X)_{2i-1}\\(W\T X)_{2i}\end{bmatrix}\T\right]
= \operatorname{tr}\bigl[\Delta_Q^{\star\T} Q\T YX\T W\bigr]\\
&\phantom{\text{(A):}\quad \sum_i \operatorname{tr}\!\left[\Delta_i^{\star\T} \begin{bmatrix}(Q\T Y)_{2i-1}\\(Q\T Y)_{2i}\end{bmatrix}\begin{bmatrix}(W\T X)_{2i-1}\\(W\T X)_{2i}\end{bmatrix}\T\right]}
= \operatorname{tr}\bigl[(XY\T Q\Delta^\star_Q)\T W\bigr],\\
&\text{(B):}\quad \sum_i \operatorname{tr}\!\left[\Delta_i^\star \begin{bmatrix}(Q\T X)_{2i-1}\\(Q\T X)_{2i}\end{bmatrix}\begin{bmatrix}(W\T Y)_{2i-1}\\(W\T Y)_{2i}\end{bmatrix}\T\right]
= \operatorname{tr}\bigl[\Delta^\star_Q Q\T XY\T W\bigr]\\
&\phantom{\text{(B):}\quad \sum_i \operatorname{tr}\!\left[\Delta_i^\star \begin{bmatrix}(Q\T X)_{2i-1}\\(Q\T X)_{2i}\end{bmatrix}\begin{bmatrix}(W\T Y)_{2i-1}\\(W\T Y)_{2i}\end{bmatrix}\T\right]}
= \operatorname{tr}\bigl[\big(YX\T Q(\Delta^\star_Q)\T\big)\T W\bigr],\\
&\text{(C):}\quad \sum_i \operatorname{tr}\!\left[\Delta_i^{\star\T}\Delta_i^\star \begin{bmatrix}(Q\T X)_{2i-1}\\(Q\T X)_{2i}\end{bmatrix}\begin{bmatrix}(W\T X)_{2i-1}\\(W\T X)_{2i}\end{bmatrix}\T\right]
= \operatorname{tr}\bigl[\Delta_Q^{\star\T}\Delta^\star_Q Q\T XX\T W\bigr]\\
&\phantom{\text{(C):}\quad \sum_i \operatorname{tr}\!\left[\Delta_i^{\star\T}\Delta_i^\star \begin{bmatrix}(Q\T X)_{2i-1}\\(Q\T X)_{2i}\end{bmatrix}\begin{bmatrix}(W\T X)_{2i-1}\\(W\T X)_{2i}\end{bmatrix}\T\right]}
= \operatorname{tr}\bigl[(XX\T Q\Delta_Q^{\star\T}\Delta^\star_Q)\T W\bigr],
\end{align*}
yielding the Euclidean gradient
\begin{equation}\label{eq:gradRealEuc}
\nabla_Q\, g = 2\left(XY\T Q\Delta^\star_Q + YX\T Q\Delta_Q^{\star\T} - XX\T Q\Delta_Q^{\star\T}\Delta^\star_Q\right).
\end{equation}

We now project $\nabla_Q\, g$ onto the tangent space $T_Q\mathrm{O}(m)$ to obtain the Riemannian gradient. Analogously to the complex case, since the Lie algebra of $\mathrm{O}(m)$ is the space of skew-symmetric matrices, $T_Q\mathrm{O}(m) = \{QS \mid S\T = -S\}$. Projecting $\nabla_Q\, g$ onto $T_Q\mathrm{O}(m)$ via the skew-symmetric part operator $\operatorname{rskew}(Z) = \tfrac{1}{2}(Z - Z\T)$ yields the Riemannian gradient
\begin{equation}\label{eq:gradReal}
\mathrm{grad}_Q\, g = Q\,\operatorname{rskew}\!\left(Q\T\, \nabla_Q\, g\right).
\end{equation}

As in the complex case, the Riemannian Hessian follows by evaluating $\delta(\nabla_Q\, g)$ and projecting onto $T_Q\mathrm{O}(m)$, via \cite[Equation (7.39)]{boumal}:
\begin{equation}\label{eq:hessReal}
\mathrm{Hess}_Q\, g[W] = Q\,\operatorname{rskew}\!\left(Q\T H_Q\, g[W] - Q\T W\operatorname{sym}(Q\T \nabla_Q\, g)\right),
\end{equation}
where $\operatorname{sym}(Z) = \tfrac{1}{2}(Z + Z\T)$, and $H_Q\, g[W] = \delta(\nabla_Q\, g)$ is the Euclidean Hessian. Applying the product rule to~\eqref{eq:gradRealEuc},
\begin{equation}\label{eq:deltaGradReal}
\begin{aligned}
H_Q\, g[W] = 2\Bigl(&XY\T W\Delta^\star_Q + XY\T Q\,\delta(\Delta^\star_Q)
+ YX\T W\Delta_Q^{\star\T} + YX\T Q\,\delta(\Delta^\star_Q)\T\\
&- XX\T W\Delta_Q^{\star\T}\Delta^\star_Q
- XX\T Q\bigl(\delta(\Delta^\star_Q)\T\Delta^\star_Q + \Delta_Q^{\star\T}\delta(\Delta^\star_Q)\bigr)\Bigr),
\end{aligned}
\end{equation}
where $\delta(\Delta^\star_Q)$ is the block diagonal obtained by differentiating each block of $\Delta^\star_Q$ entrywise according to its active form,
\[
\delta(\Delta_i^\star) = \begin{bmatrix} \delta(a_i^\star) & \delta(b_i^\star) \\ -\delta(b_i^\star) & \delta(a_i^\star) \end{bmatrix}
\quad\text{or}\quad
\delta(\Delta_i^\star) = \begin{bmatrix} \delta(\lambda_{2i-1}^\star) & 0 \\ 0 & \delta(\lambda_{2i}^\star) \end{bmatrix}
\]
(and $\delta(\lambda_m^\star)$ for the singleton, if present). By the quotient rule applied to \eqref{eq:optRot} and \eqref{eq:optDiag},
\[
\delta(a_i^\star) = \frac{\delta(\alpha_i) - a_i^\star\bigl(\delta(\phi_{2i-1})+\delta(\phi_{2i})\bigr)}{\phi_{2i-1}+\phi_{2i}},
\qquad
\delta(b_i^\star) = \frac{\delta(\beta_i) - b_i^\star\bigl(\delta(\phi_{2i-1})+\delta(\phi_{2i})\bigr)}{\phi_{2i-1}+\phi_{2i}},
\]
\[
\delta(\lambda_j^\star) = \frac{\delta(\gamma_j) - \lambda_j^\star\,\delta(\phi_j)}{\phi_j}.
\]

\section{Numerical Experiments: Real Case}\label{sec:realNum}
We proceed similarly to Section~\ref{sec:complexNum} by numerically corroborating the Riemannian gradient and Hessian computed in the previous section. Our code implements Manopt's \texttt{stiefelfactory}---which allows us to work over $\mathrm{O}(m)$. Note that \texttt{stiefelfactory}'s default retraction is QR-decomposition based and only first order, so we set \texttt{M.retr=M.retr\_polar} (the polar retraction, which is second order). Further note that $\mathrm{O}(m)$ is disconnected, so a solver starting at $Q\in \mathrm{SO}(m)$ never reaches the other connected component $\mathrm{O}(m) \setminus \mathrm{SO}(m)$. However, every objective value attainable on one component of $\mathrm{O}(m)$ is attained on the other; if $Q=Q'\operatorname{diag}(-1,1,\ldots,1)$ with $Q'\in \mathrm{O}(m) \setminus \mathrm{SO}(m)$, then by Lemma~\ref{lem:optDelt}, this sign change leaves the objective value unchanged, since we may adjust the sign of $b_1^\star$, if $\Psi$ is the active branch.

\subsection{Checking the Gradient and Hessian}\label{ssec:realCheck}~\\[8pt]
\indent We define $E(t)$ and $E_H(t)$ analogously as in Subsection~\ref{ssec:complexCheck} to be over the reals and $\mathrm{O}(m)$; we do the same for $r_{\mathrm{tan}}$, $r_{\mathrm{lin}}$, and $r_{\mathrm{sym}}$. After running \texttt{checkgradient} and \texttt{checkhessian}, we obtain the following output:

\begin{figure}[H]
    \centering
    \includegraphics[width=0.49\textwidth]{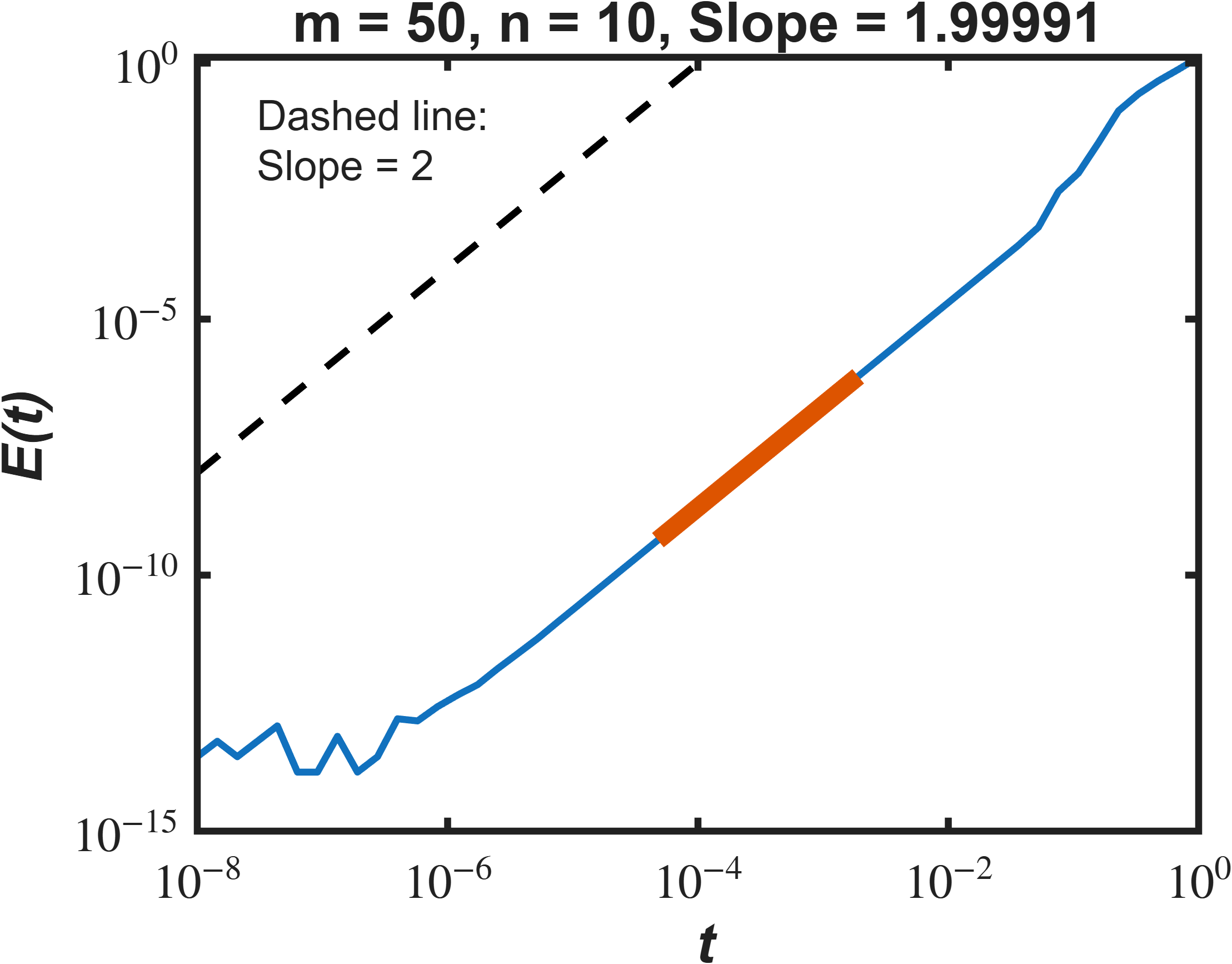}
    \hfill
    \includegraphics[width=0.49\textwidth]{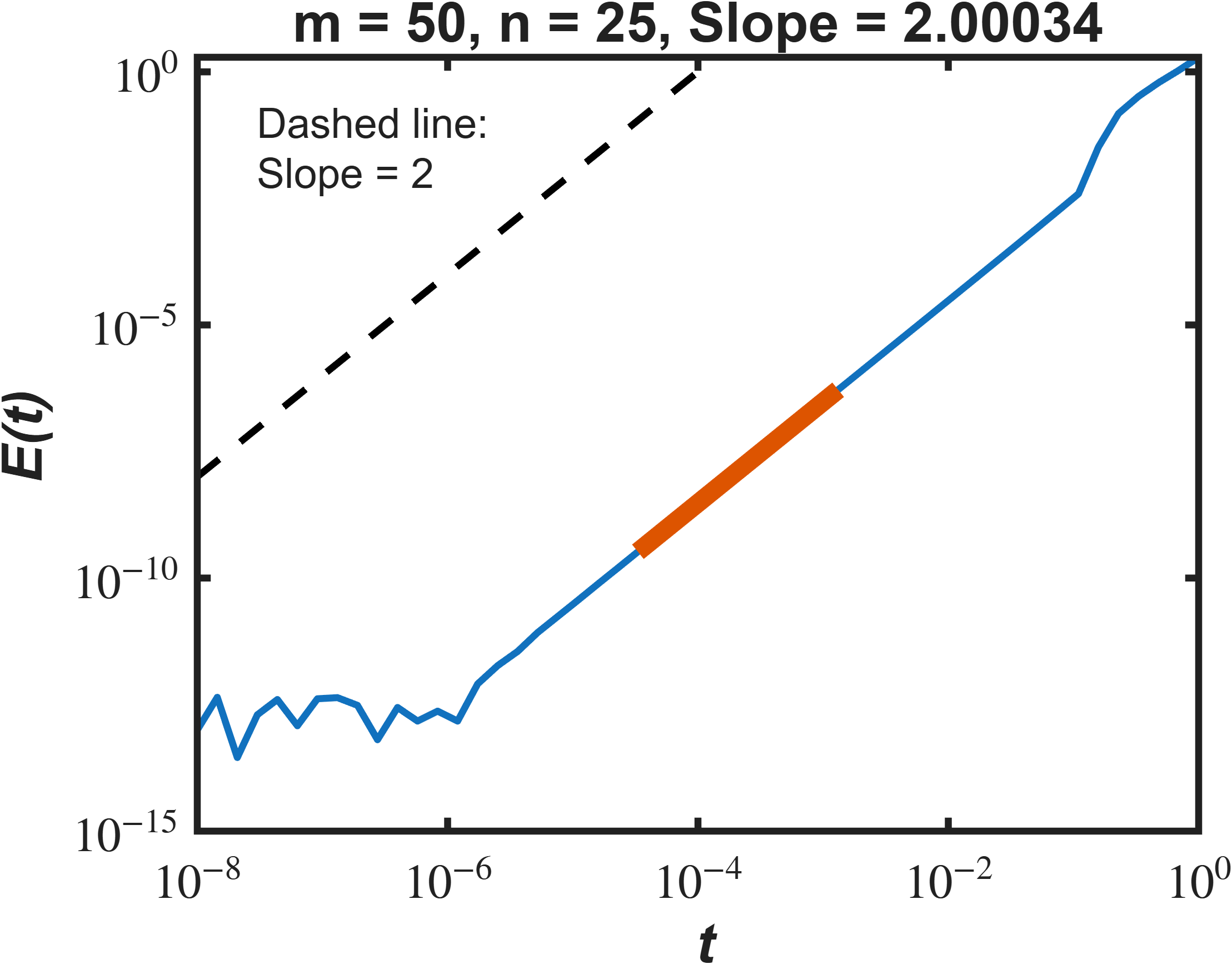}\\[10pt]
    \includegraphics[width=0.49\textwidth]{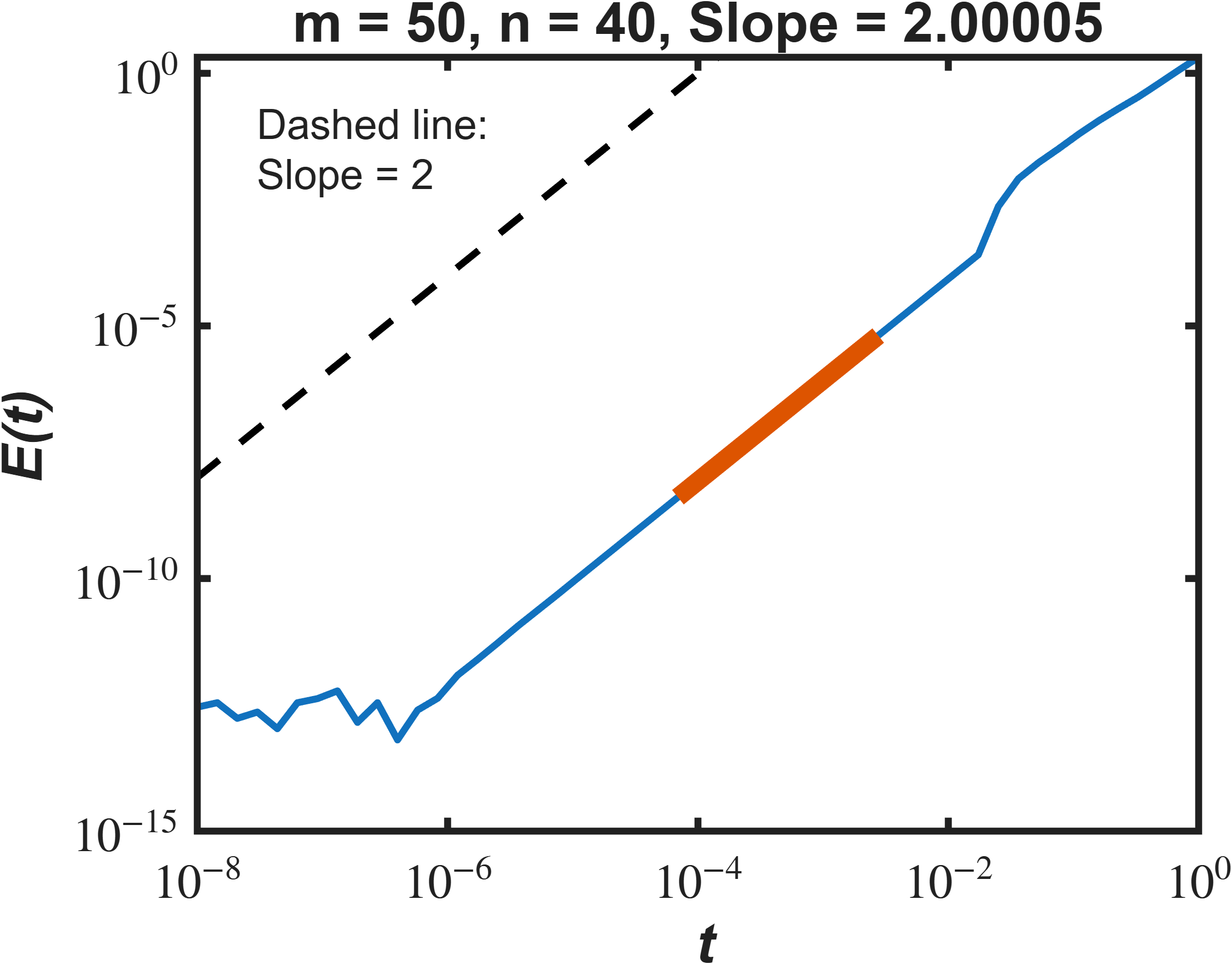}
    \hfill
    \includegraphics[width=0.49\textwidth]{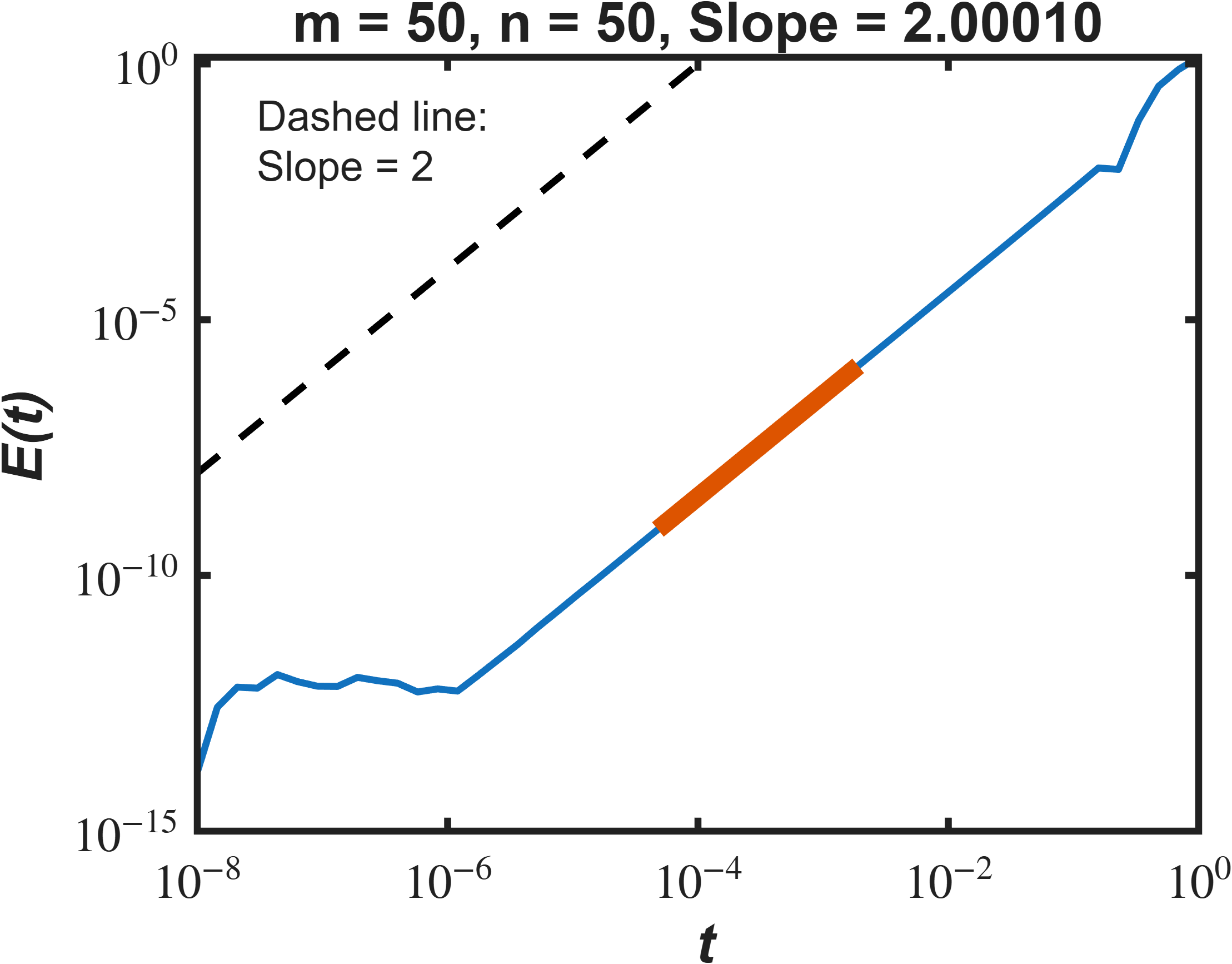}
    
    \caption{We again run \texttt{checkgradient} across four pairs of random $X,Y$ of varying sizes. Line stylings are as in Figure~\ref{fig:gradCheck}. In each case, the slope is $\sim2$ as desired.}\label{fig:gradCheckReal}
\end{figure}
\newpage
\begin{figure}[H]
    \centering
    \includegraphics[width=0.49\textwidth]{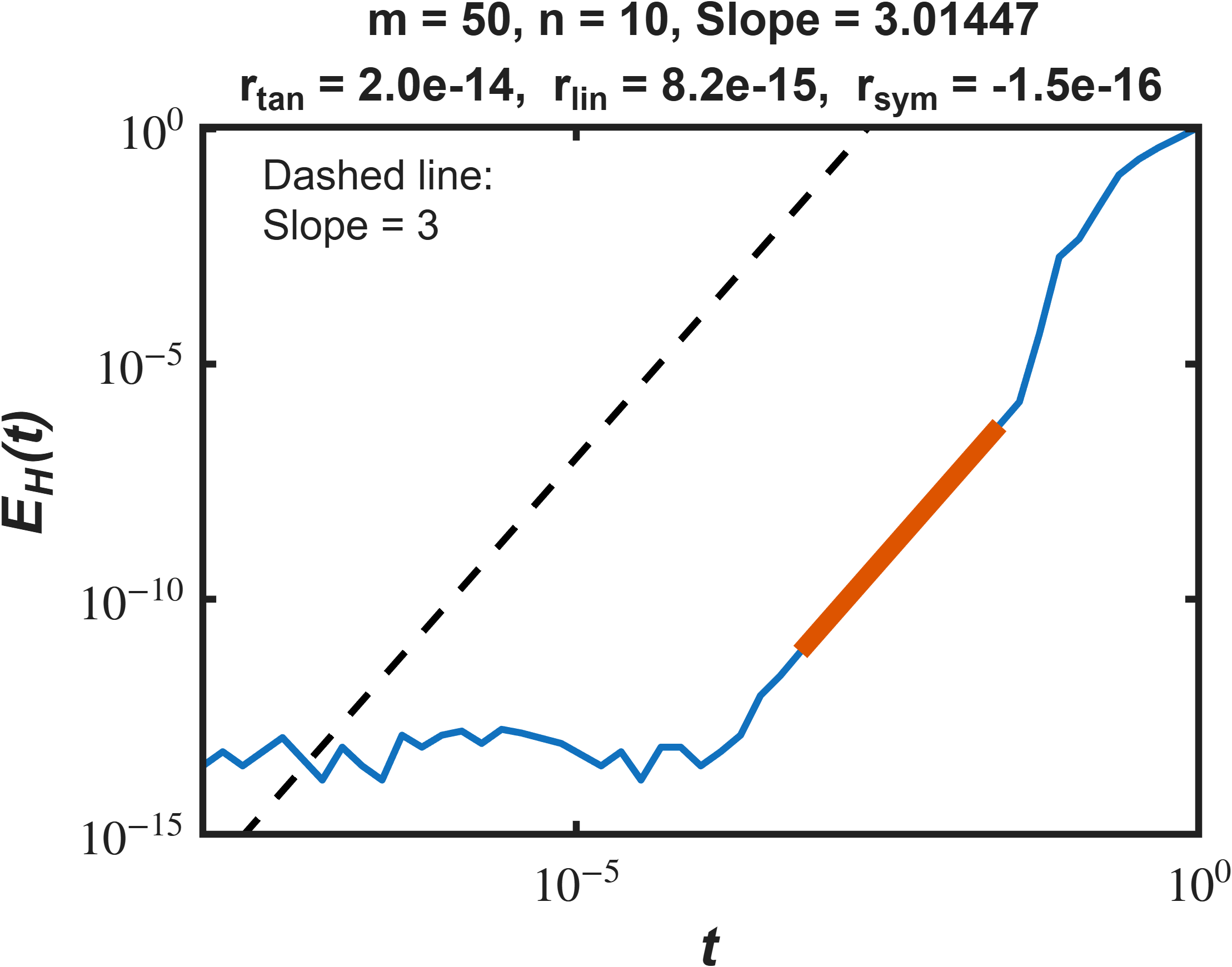}
    \hfill
    \includegraphics[width=0.49\textwidth]{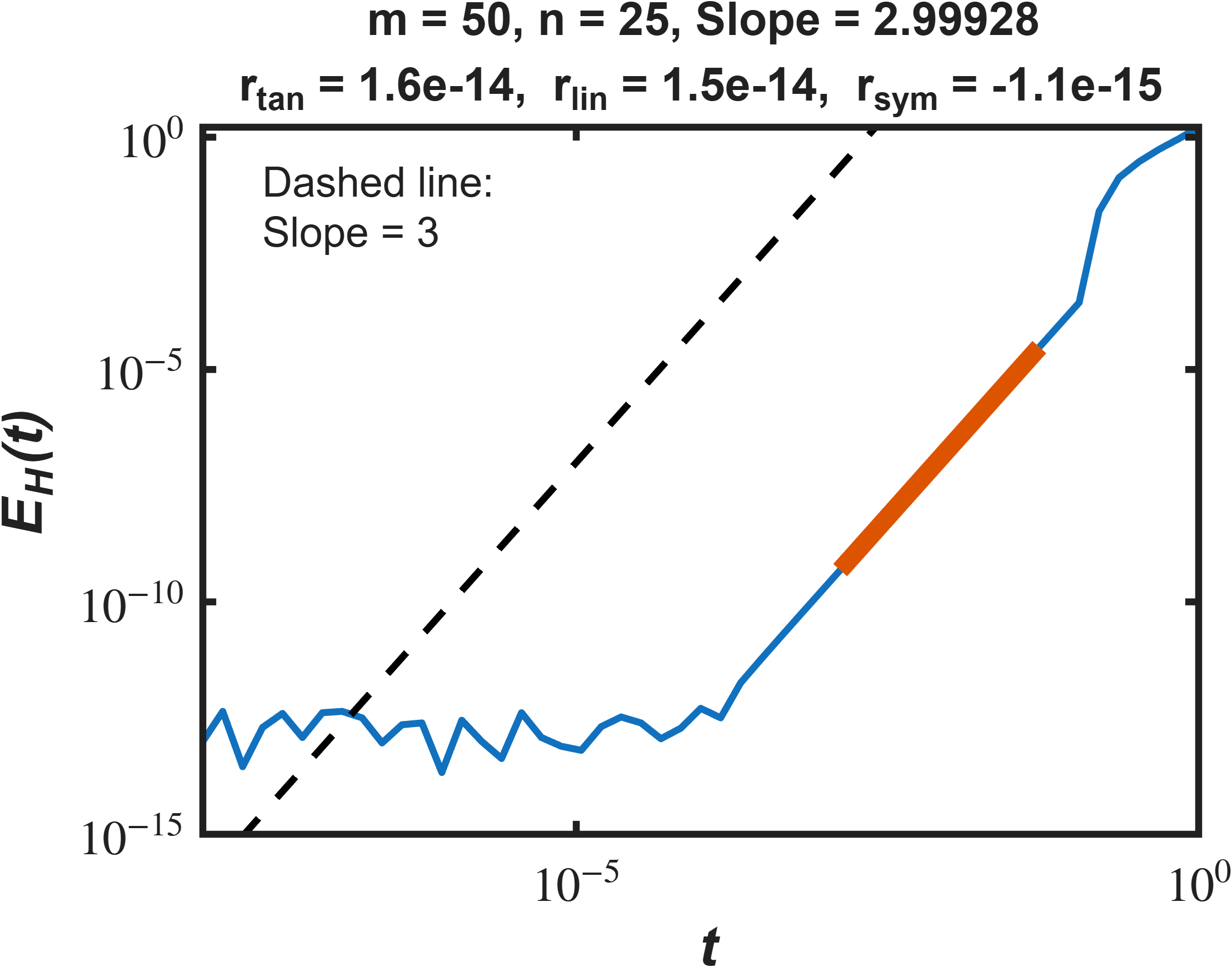}\\[10pt]
    \includegraphics[width=0.49\textwidth]{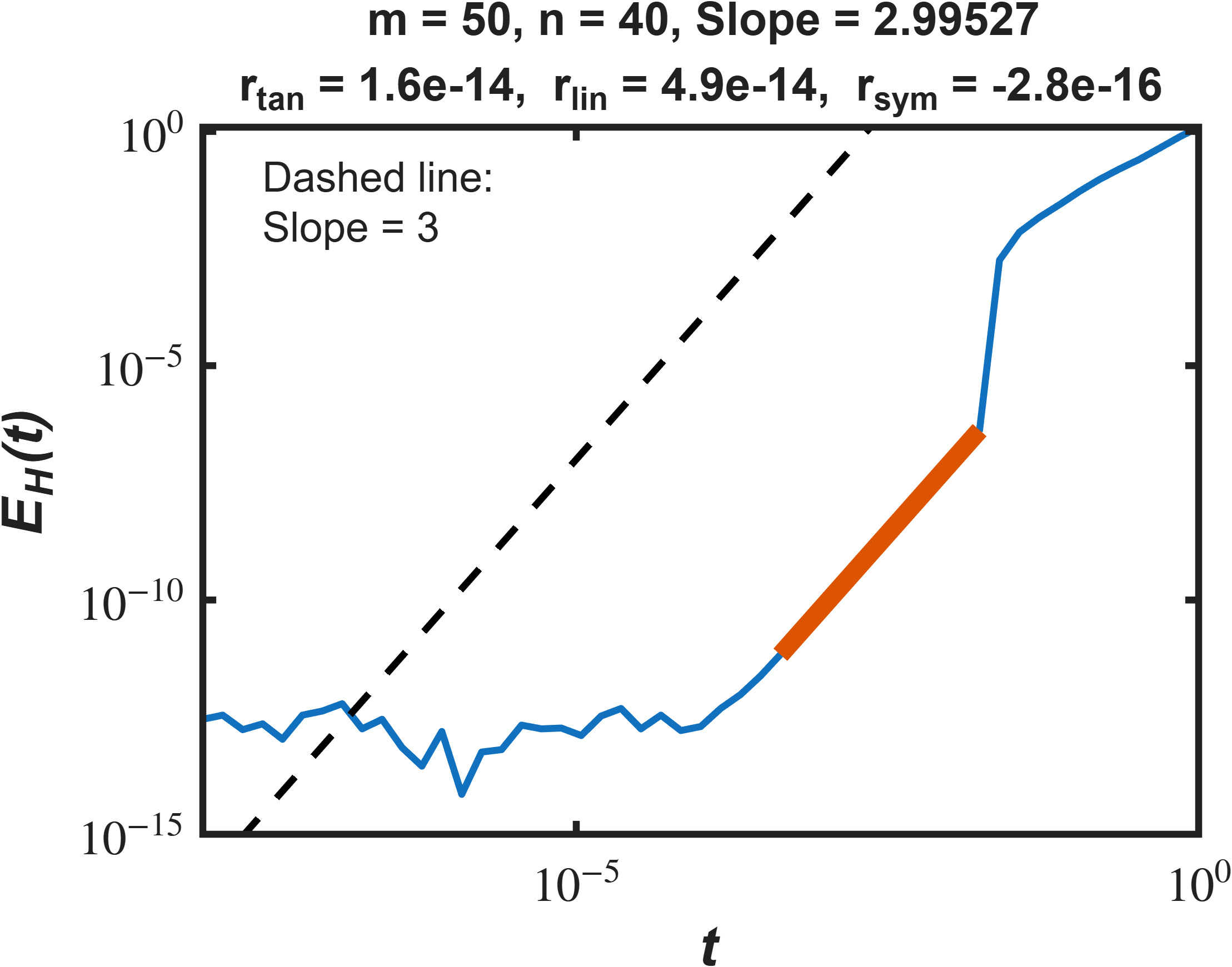}
    \hfill
    \includegraphics[width=0.49\textwidth]{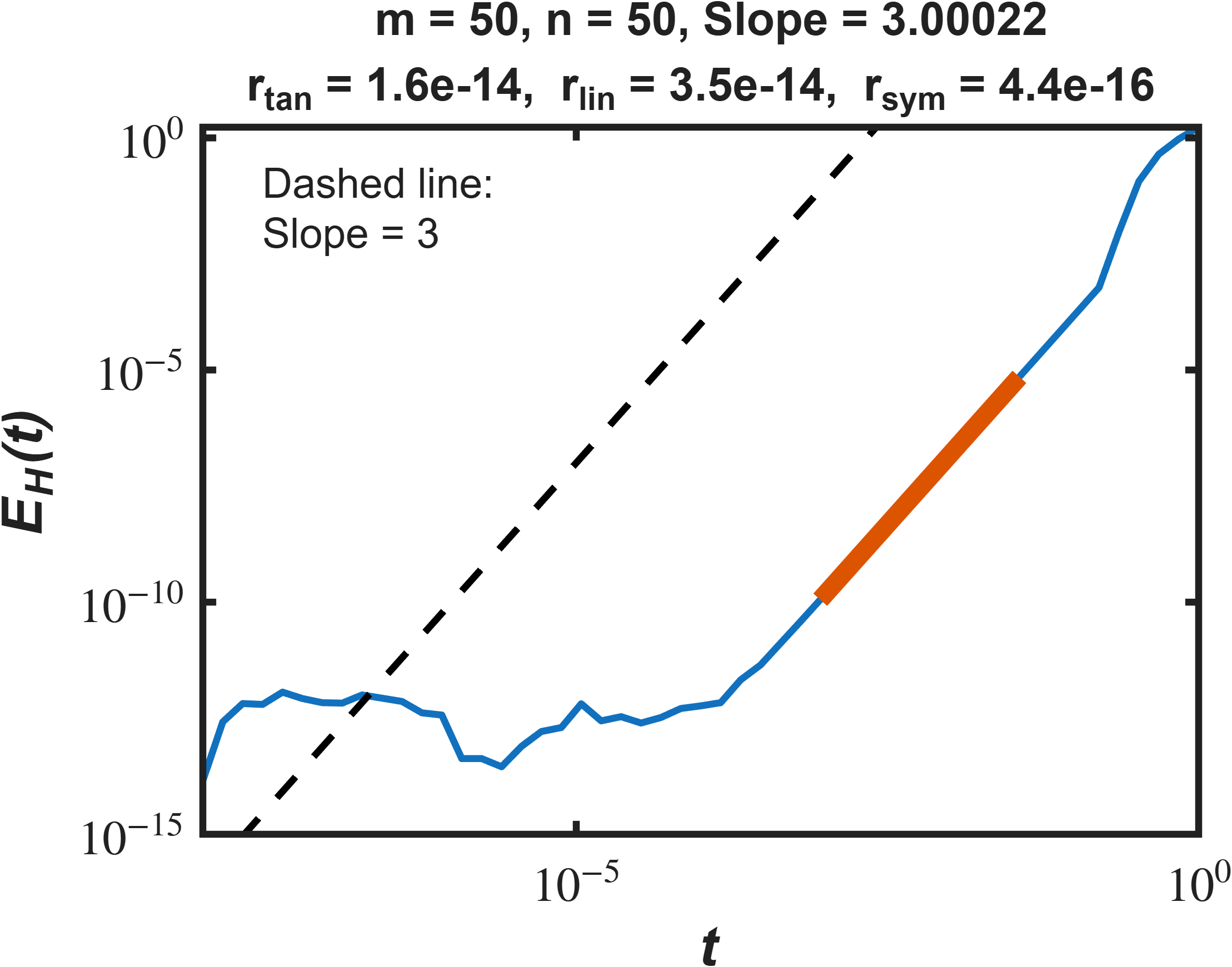}
    
    \caption{The above output from \texttt{checkhessian} verifies that in each case the slope is $\sim3$, and $r_{\mathrm{tan}}, r_{\mathrm{lin}}, r_{\mathrm{sym}}$ all vanish up to machine precision.}\label{fig:hessCheckReal}
\end{figure}
Figures \ref{fig:gradCheckReal} and \ref{fig:hessCheckReal} demonstrate that our desired output is obtained, so we proceed with our performance analysis. 
\subsection{Known Global Optimum Recovery}\label{ssec:realKnown}~\\[8pt]
\indent Note that we do not implement a subsection akin to Subsection~\ref{ssec:complexPerform}, since this subsection is as revelatory about the optimization performance of our real solver as the other. Performance differences on Gaussian and uniform distributions in Subsection~\ref{ssec:complexPerform} were negligible, and performance trends (like residual spread across starts and data scaling) are still apparent here. This subsection receives the additional benefit of having established a known global minimum by construction, so we can analyze the accuracy of our model. 

As in Subsection~\ref{ssec:complexKnown}, we generate a real, uniformly distributed random $X$ via MATLAB's \texttt{rand(m,n)}, and we generate $A^\star = Q^\star \Delta^\star (Q^\star)\T$ by drawing $Q^\star$ from the Haar measure on $\mathrm{O}(m)$ and drawing the quasidiagonal $\Delta^\star$ (independent of $Q^\star$) blockwise: each $2\times2$ block is, with probability $0.5$ either in $\Psi$ or $\Omega$---with all of $a,b,\lambda_1,\lambda_2$ drawn uniformly from $[-1,1]$ (we do not test odd $m$ in our examples, but our code allows for this). We then set $Y:=A^\star X$ and observe whether $A^\star$, or an equally optimal matrix, is recovered. We obtain the following results (residual and gradient norm plots are again available via our \href{https://github.com/kbierly/NPP}{GitHub repository}):
\newpage

\begin{table}[H]
\centering
\caption{Global optimum recovery for the real NPP with uniform $X$, over two random starts. \textit{Rel.\ $A^\star$} is the relative difference ${\|A_{\mathrm{rec}} - A^\star\|_F}$ as a percentage of $\|A^\star\|_F$.}
\resizebox{\textwidth}{!}{%
\begin{tabular}{cc|cc|cc|cc|cc}
\hline
& & \multicolumn{2}{c|}{Init Res} & \multicolumn{2}{c|}{Final Res} & \multicolumn{2}{c|}{Time (s)} & \multicolumn{2}{c}{Rel.\ $A^\star$ (\%)} \\
$m$ & $n$ & Start 1 & Start 2 & Start 1 & Start 2 & Start 1 & Start 2 & Start 1 & Start 2 \\
\hline
10 &  2 &    0.6 &    0.2 & 9.9e-14 & 3.7e-02 &  0.23 & 0.04 & 21407.3 &  334.0 \\
10 &  5 &    3.2 &    1.8 & 5.0e-12 & 7.3e-12 &  0.38 & 0.22 &   83.4 &  127.4 \\
10 &  8 &    3.5 &    4.5 & 5.7e-02 & 1.8e-02 &  0.29 & 0.11 &   89.4 &   80.9 \\
10 & 10 &    8.4 &   17.6 & 4.8e-13 & 1.7e-15 &  0.13 & 0.09 &    0.0 &    0.0 \\
20 &  4 &    2.7 &    2.1 & 1.5e-18 & 2.7e-21 &  0.22 & 0.11 & 1244.4 &  147.9 \\
20 & 10 &    5.9 &    4.8 & 2.5e-09 & 1.2e-08 & 11.24 & 21.15 &   81.3 &   54.4 \\
20 & 16 &   21.7 &   28.5 & 5.5e-03 & 5.2e-02 &  0.99 & 0.61 &    6.9 &   46.9 \\
20 & 20 &   18.8 &   24.3 & 4.7e-12 & 8.0e-12 &  1.83 & 2.19 &    0.0 &    0.0 \\
50 & 10 &   48.3 &   42.7 & 2.0e-19 & 2.2e-16 &  0.38 & 0.31 &  130.1 &  136.4 \\
50 & 25 &  117.1 &  121.7 & 9.1e-10 & 3.6e-09 & 245.96 & 482.47 &   91.3 &  103.3 \\
50 & 40 &  138.0 &  121.8 & 1.0e-01 & 2.6e-02 & 71.87 & 94.35 &    9.8 &    5.2 \\
50 & 50 &  194.8 &  170.1 & 1.7e-01 & 7.7e-01 & 38.52 & 28.88 &    8.8 &   13.9 \\
\hline
\end{tabular}%
}
\label{tab:real_known_min}
\end{table}

In contrast to the complex case, where every recovered $A_{\mathrm{rec}}$ produced a residual of less than $10^{-10}$, Table~\ref{tab:real_known_min} shows that only $45.8\%$ of the trials we ran achieved this threshold, and $62.5\%$ achieved a residual less than $10^{-5}$. Hence, our solver quite frequently terminated at a suboptimal $A_{\mathrm{rec}}$. We observe a few trends that carry over from the complex case:
\begin{itemize}
\item For $n$ near $m/2$, solve time remains high.
\item For the trials that obtained a residual of less than $10^{-10}$, when $n$ is small Rel.\ $A^\star$ is high (reflecting a large degree of freedom), whereas the instances with large $n$ are more proximal to $A^\star$.
\item Solve time unsurprisingly increases as matrix size increases.
\end{itemize}

One potential reason why suboptimal results were often recovered in the real case---which is indeed one of the more interesting findings of this section---is a perhaps increased nonconcavity on the landscape of the objective function \eqref{eq:gQ}, i.e., a greater frequency in suboptimal stationary points that appear far from the global optimum. In the complex case, $D^\star_U$ is diagonal, so the objective~\eqref{eq:fU} is invariant under permutations of the coordinates. In the real case, this invariance is lost due to the block structure of $\Delta^\star_Q$, as we demonstrate in the following example.

\begin{example}\label{ex:realIssues}

Let $X = I_4$, $Q^\star=I_4$, 
\[
\Delta^\star =
\begin{bmatrix}
2 & 1 & 0 & 0\\
-1 & 2 & 0 & 0\\
0 & 0 & 0 & 3\\
0 & 0 & -3 & 0
\end{bmatrix}, \quad A^\star = Q^\star \Delta^\star (Q^\star)\T= \Delta^\star,
\]
and $Y=A^\star X=\Delta^\star$. Then let
\[P=\begin{bmatrix}
1 & 0 & 0 & 0\\
0 & 0 & 1 & 0 \\
0 & 1 & 0 & 0 \\
0 & 0 & 0 & 1
\end{bmatrix}\in \mathrm{O}(4),\]
such that 
\[
P\T A^\star P =
\begin{bmatrix}
2 & 0 & 1 & 0\\
0 & 0 & 0 & 3\\
-1 & 0 & 2 & 0\\
0 & -3 & 0 & 0
\end{bmatrix},
\]
and by Lemma~\ref{lem:optDelt}, we have
\[
\Delta^\star_{P} =
\begin{bmatrix}
2 & 0 & 0 & 0\\
0 & 0 & 0 & 0\\
0 & 0 & 2 & 0\\
0 & 0 & 0 & 0
\end{bmatrix}.
\]
Furthermore,
\begin{equation}
    \begin{aligned}
\nabla_{P}\, g &= 2\left(XY\T P\Delta^\star_{P} + YX\T P\Delta^{\star\T}_{P} - XX\T P\Delta_{P}^{\star\T}\Delta_{P}^\star\right)\\
&=2\left((\Delta^\star)\T P\Delta^\star_{P} + \Delta^\star P\Delta^{\star\T}_{P} -  P\Delta_{P}^{\star\T}\Delta_{P}^\star\right)\\
&= \begin{bmatrix}
    8 & 0 & 0 & 0\\
    0 & 0 & 8 & 0\\
    0 & 0 & 0 & 0 \\
    0 & 0 & 0 & 0
\end{bmatrix},
    \end{aligned}
\end{equation}
then 
\begin{equation}
    \begin{aligned}
\mathrm{grad}_P\, g&=  \frac{1}{2} P (P\T\, \nabla_P\, g - (P\T\, \nabla_P\, g)\T)\\
&= \frac{1}{2} P (0)=0.
    \end{aligned}
\end{equation}

Thus, the Riemannian gradient vanishes at $P$. To determine whether a second-order method can escape $P$, we compute $\operatorname{Hess}_P g[W]$. Let $W = PS \in T_P\mathrm{O}(4)$, where $S\T = -S$.

Since $X = I_4$, we have $\phi_i = 1$ for all $i$, so $\delta(\phi_i) = 0$, and 
\begin{align*}\delta(\lambda_i^\star) = \delta(\gamma_i) = (W\T Y P + P\T Y W)_{i,i}&=(-SP\T Y P + P\T Y PS)_{i,i}\\
&=(-S(P\T Y P + P\T Y\T P))_{i,i}\\
&=\left(-S\left(\begin{bmatrix}
4 & 0 & 0 & 0\\
0 & 0 & 0 & 0\\
0 & 0 & 4 & 0\\
0 & 0 & 0 & 0
\end{bmatrix}\right)\right)_{i,i},\end{align*}
and since $S_{i,i} = 0$, every such sum vanishes. Both blocks of $\Delta^\star_P$ are in the $\Omega$ branch, so $\delta(\Delta^\star_P) = \operatorname{diag}\bigl(\delta(\lambda_1^\star),\ldots,\delta(\lambda_4^\star)\bigr) = 0$.

Substituting $X = I_4$ and $\delta(\Delta^\star_P) = 0$ into \eqref{eq:deltaGradReal} and using $(\Delta^\star_P)^2=2 \Delta^\star_P$,
\[
H_P\, g[W] = 2\left((Y + Y\T) W \Delta^\star_P - 2W \Delta^\star_P\right).
\]
Furthermore,
\[
P\T\, \nabla_P\, g = \begin{bmatrix}
8 & 0 & 0 & 0\\
0 & 0 & 0 & 0\\
0 & 0 & 8 & 0\\
0 & 0 & 0 & 0
\end{bmatrix} = 4\Delta^\star_P,
\]
which is symmetric, so $\operatorname{sym}(P\T \nabla_P\, g) = 4\Delta^\star_P$. Since $P\T(Y+Y\T)P = 2\Delta^\star_P$ and $P\T W = S$, equation \eqref{eq:hessReal} gives
\begin{align*}
\operatorname{Hess}_P\, g[W]
&= P\,\operatorname{rskew}\!\left(P\T H_P\, g[W] - P\T W\operatorname{sym}(P\T \nabla_P\, g)\right)\\
&= P\operatorname{rskew}\!\left(4\Delta^\star_P S \Delta^\star_P - 8S\Delta^\star_P\right).
\end{align*}
A direct computation yields (where $s_{i,j}$ for $i,j\in\{1,2,3,4\}$ denotes an entry in $S$):
\[
\operatorname{Hess}_P\, g[W] = P\operatorname{rskew}\!\left(4\Delta^\star_P S \Delta^\star_P - 8S\Delta^\star_P\right)
= -8\begin{bmatrix}
0 & s_{1,2} & 0 & s_{1,4}\\
0 & -s_{2,3} & 0 & s_{3,4}\\
-s_{1,2} & 0 & s_{2,3} & 0\\
-s_{1,4} & 0 & -s_{3,4} & 0
\end{bmatrix}.
\]
The entries $s_{1,3}$ and $s_{2,4}$ are annihilated, and the remaining four are scaled by $-8$. Hence $\operatorname{Hess}_P\, g$ has eigenvalues $-8$ (multiplicity four) and $0$ (multiplicity two).

Thus, at $P$ the Riemannian gradient vanishes and the Riemannian Hessian is negative
semidefinite, yet
\[\|P \Delta_P^\star P\T X - Y \|_F^2=20> \|A^\star X-Y\|_F^2=0.\]
Hence $P$ is a suboptimal critical point at which our second-order model would predict no
ascent, so a random start initialized nearby may terminate at $P$. The permutation
invariance of the complex objective precludes this particular phenomenon.
\end{example}

If $m$ is even, this lack of permutation invariance admits $(m-1)!!$ coordinate pairings that may introduce suboptimal stationary points as in Example~\ref{ex:realIssues}; if $m$ is odd, then there are $m!!$ such pairings \cite{callan}. We do not attempt to quantify how many of these points actually give rise to suboptimal critical points where our solver would terminate. Running more random starts or introducing an improved initialization strategy may increase the likelihood of avoiding such suboptimal stationary points, but we leave an analysis of the efficacy of these methods to future work. 
\subsection{Comparison with Guglielmi and Scalone's Algorithm over the Real CNP}\label{ssec:realComp}~\\[8pt]
\indent In this subsection we again compare our method with Guglielmi and Scalone's, but this time we do so for the Real CNP. As previously discussed, Guglielmi and Scalone's method can be adapted to work over both $\M_m(\C)$ and $\M_m(\R)$, while Ruhe's is restricted to the former. 

As in Subsection~\ref{ssec:complexComp}, we can update our objective function and its Riemannian gradient and Hessian for the Real CNP. Again, since $X=I_m$, $\phi_i = 1$ for all $i=1,\ldots, m$. Set 
\[C:=Q\T Y Q,\]we thus have:
\[
\gamma_i = C_{i,i}, \quad \alpha_i= C_{2i-1,2i-1} + C_{2i,2i},
\]
\[\beta_i= C_{2i-1,2i} - C_{2i,2i-1},
\]
such that 
\begin{equation}
       (a_i^\star,  b_i^\star) =
          \left(\dfrac{C_{2i-1,2i-1} + C_{2i,2i}}{2},\
                \dfrac{C_{2i-1,2i} - C_{2i,2i-1}}{2}\right),
  \end{equation}
  and for $j \in \{2i-1,\, 2i\}$ (or when $m$ is odd, and $j=m$),
  \begin{equation}
      \lambda_j^\star =\gamma_j=C_{j,j}.
  \end{equation}

We can thus update $\Delta^\star_Q$ by substituting $\lambda_j^\star$, $a_i^\star$, and $b_i^\star$ into \eqref{eq:optBlock}.
The objective~\eqref{eq:gQ} reduces to
\begin{equation}
\begin{aligned}
   g(Q) = \|\Delta^\star_Q\|_F^2
   = \sum_{i=1}^{\lfloor m/2\rfloor}
      \max\Bigg\{\,
        &\dfrac{(C_{2i-1,2i-1}+C_{2i,2i})^2 + (C_{2i-1,2i}-C_{2i,2i-1})^2}{2},\\
        &\;\; C_{2i-1,2i-1}^2 + C_{2i,2i}^2
      \,\Bigg\} + C_{m,m}^2,
\end{aligned}
\end{equation}
where the final term is present only when $m$ is odd. Substituting $X = I_m$ into the Euclidean gradient~\eqref{eq:gradRealEuc} yields
\begin{equation}\label{eq:gradRealCNP}
   \nabla_Q \,g = 2\bigl(Y\T Q\Delta^\star_Q + YQ(\Delta^\star_Q)\T - Q\Delta_Q^{\star\T}\Delta^\star_Q\bigr),
\end{equation}
and into the Euclidean Hessian~\eqref{eq:deltaGradReal} yields
\begin{align*}
   H_Q\, g[W] = 2\bigl(
   &Y\T W\Delta^\star_Q + Y\T Q\,\delta(\Delta^\star_Q) \\
   &+ YW(\Delta^\star_Q)\T + YQ\,\delta(\Delta^\star_Q)\T \\
   &- W\,\Delta_Q^{\star\T}\Delta^\star_Q - Q\,\delta(\Delta_Q^{\star\T}\Delta^\star_Q)\bigr),
\end{align*}
where, since $\delta(\phi_j) = 0$, we obtain
\[
\delta(a_i^\star) = \frac{\delta(C)_{2i-1,2i-1} + \delta(C)_{2i,2i}}{2},
\qquad
\delta(b_i^\star) = \frac{\delta(C)_{2i-1,2i} - \delta(C)_{2i,2i-1}}{2},
\]
\[
\delta(\lambda_j^\star) = \delta(C)_{j,j},
\]
from which $\delta(\Delta^\star_Q)$ is assembled by substituting into the active branch of \eqref{eq:optBlock}. Lastly, we evaluate $\delta(C) = W\T Y Q + Q\T Y W$.

We test Guglielmi and Scalone's algorithm against ours (with a random start on $\mathrm{O}(m)$) for several familiar examples from Subsection \ref{ssec:complexComp} adapted to be real---namely $B_7$, $J_7$, $T_7$, $F_{12}$, and \cite[Example 1]{scalone}. We also test these two methods on random, uniformly distributed $Y\in \M_m(\R)$ and ${m\in \{10,20,50,100\}}$. We record the results here:

\begin{table}[H]
\centering
\caption{Performance on the benchmark matrices of \cite{ruhe} and \cite{scalone}---adapted for the reals.
Line styles as in Figure~\ref{fig:realLit}. Columns as in Table~\ref{tab:literature}.}
\label{tab:realliterature}
\begin{tabular}{c|c|c|c|c|c}
\hline
\rule[-1.2ex]{0pt}{4.2ex}Matrix & Method & Final Residual & $\|AA\T-A\T A\|_F$ & Iterations & Time (s) \\
\hline
\multirow{2}{*}{Ex.\ 1}
  & \lsnpp\ NPP  & 0.709 & 1.0e-15 & 3  & 0.019 \\
  & \lsgs\ G\&S  & 0.709 & 1.6e-07 & 9\,/\,862 & 0.031 \\
\hline
\multirow{2}{*}{$B_7$}
  & \lsnpp\ NPP  & 20.794 & 5.2e-15 & 13 & 0.046 \\
  & \lsgs\ G\&S  & 20.073 & 1.4e-06 & 20\,/\,1398 & 0.068 \\
\hline
\multirow{2}{*}{$J_7$}
  & \lsnpp\ NPP  & 1.041 & 1.3e-15 & 20 & 0.035 \\
  & \lsgs\ G\&S  & 0.857 & 2.7e-07 & 21\,/\,1216 & 0.078 \\
\hline
\multirow{2}{*}{$T_7$}
  & \lsnpp\ NPP  & 7.211 & 3.1e-15 & 13 & 0.041 \\
  & \lsgs\ G\&S  & 6.918 & 3.8e-07 & 12\,/\,6448 & 0.337 \\
\hline
\multirow{2}{*}{$F_{12}$}
  & \lsnpp\ NPP  & 509.922 & 3.1e-13 & 31 & 0.167 \\
  & \lsgs\ G\&S  & 582.681 & 3.5e-05 & 29\,/\,6548 & 0.779 \\
\hline
\end{tabular}
\end{table}

From Table~\ref{tab:realliterature}, we observe a few noticeable differences from the complex case. In the complex case, we found strong agreement across all three methods on every instance, but in the real case, the final residuals differ fairly significantly. Guglielmi and Scalone's method achieved a lower final residual than our approach on three instances, and ours found a more optimal real normal matrix on the $F_{12}$ instance. In each instance where a difference in final residual occurred, these residuals disagree by greater than 3\% but less than 20\% of the larger residual. These differences again demonstrate an increased difficulty in working with the nonconcavity of the Real CNP. Unsurprisingly, under the current stopping criteria for both methods, our results continue to produce normal matrices that satisfy $\|AA\T - A\T A\|_F\approx 0$ more closely than Guglielmi and Scalone's. We also find that our method terminated in less wall-clock time than Guglielmi and Scalone's on each instance.

Lastly, both methods recover the same closest real normal matrix for \cite[Example 1]{scalone}. However, this result again differs from what Guglielmi and Scalone reported, as in the complex case. Guglielmi and Scalone recover the matrix $X_r$ via their own algorithm, and they report that $\|X_r - Y\|_F^2 \approx 1.1381$, which is greater than our final recovered residual of $\|A_{\mathrm{rec}} - Y\|_F^2 \approx 0.709$. Specifically, for
\[Y=\begin{bmatrix}
    0.3 & -1 & 0\\
    1 & 0.5 & -0.3\\
    0 & -1 & 0 
\end{bmatrix},
\]
both methods in our implementation obtain (after rounding):
\[
A_{\mathrm{rec}}=\begin{bmatrix}0.5055 &  -0.8704  &  0.4862\\
    0.9895 &   0.4729  & -0.0064\\
   -0.1220  & -0.4707  & -0.1784
\end{bmatrix}.
\]

\begin{figure}[H]
    \centering
    \begin{minipage}[t]{0.45\textwidth}\vspace*{0pt}
    \includegraphics[width=\textwidth]{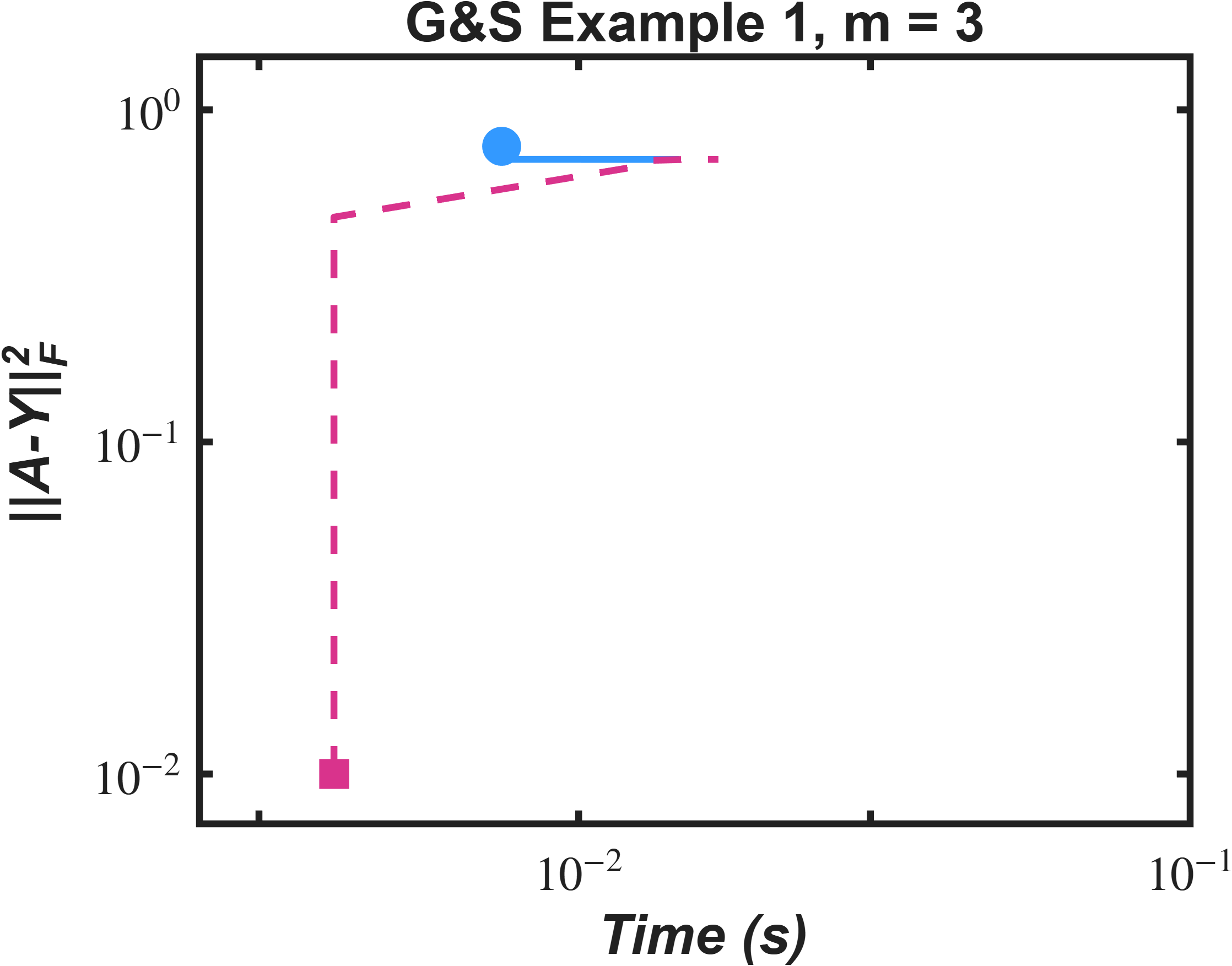}
\end{minipage}\hfill
\begin{minipage}[t]{0.45\textwidth}\vspace*{0pt}
    \includegraphics[width=\textwidth]{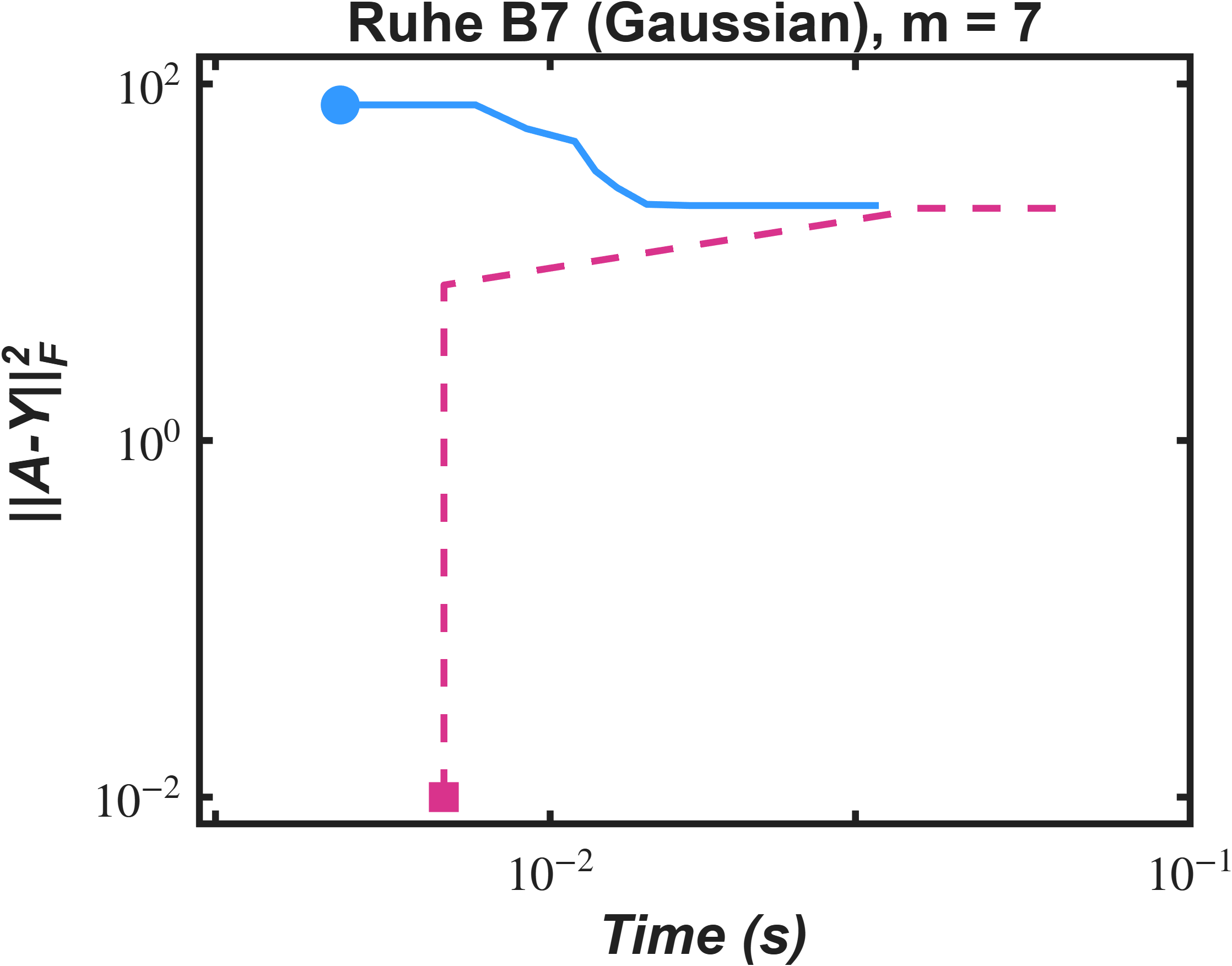}
\end{minipage}\hfill
\begin{minipage}[t]{0.10\textwidth}\vspace*{6.5pt}
    \includegraphics[width=\textwidth]{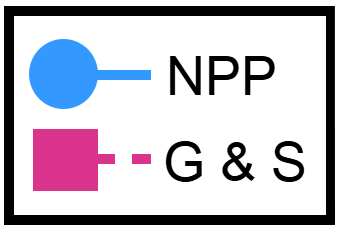}
\end{minipage}\\[5pt]
\begin{minipage}[t]{0.45\textwidth}\vspace*{0pt}
    \includegraphics[width=\textwidth]{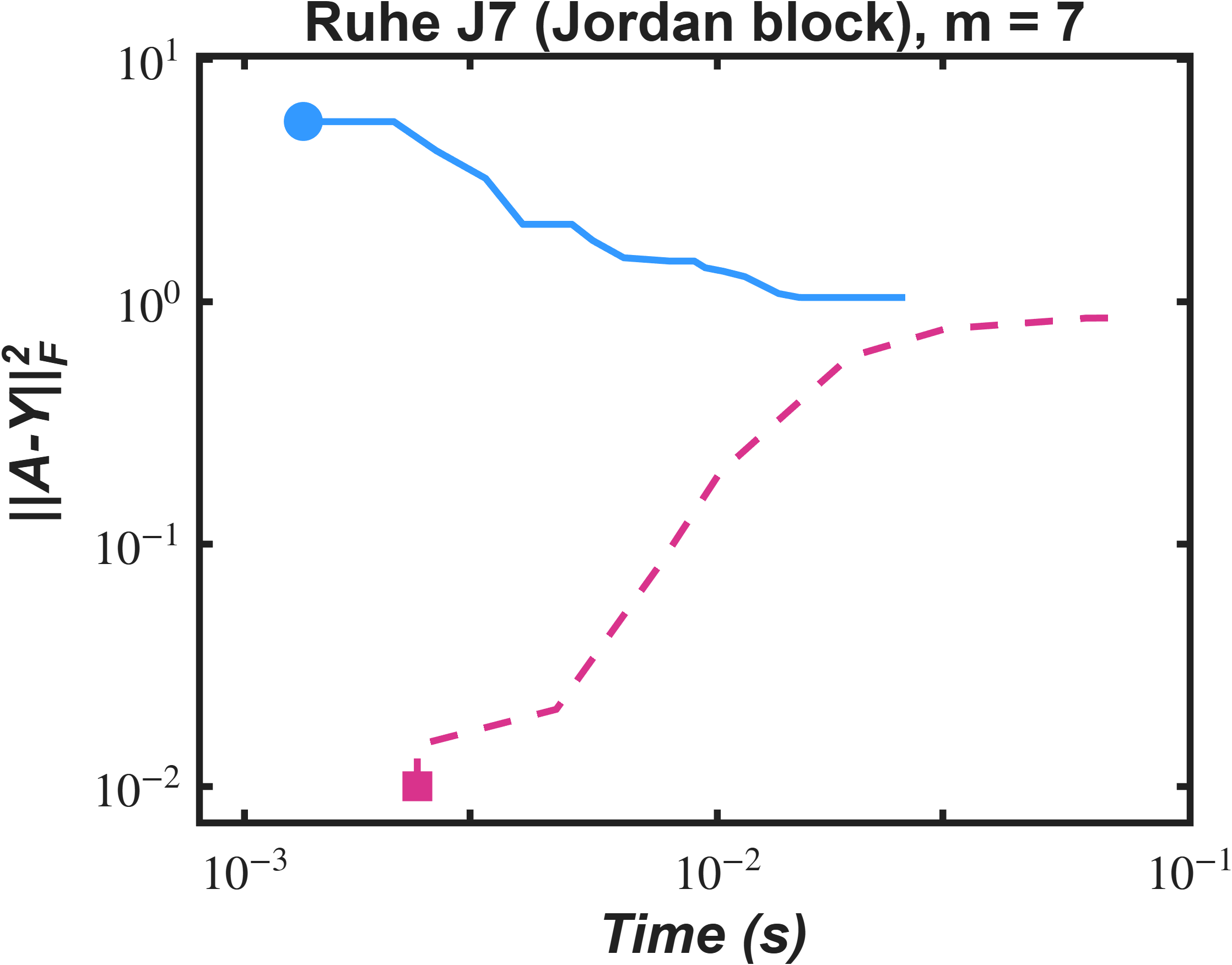}
\end{minipage}\hfill
\begin{minipage}[t]{0.45\textwidth}\vspace*{0pt}
    \includegraphics[width=\textwidth]{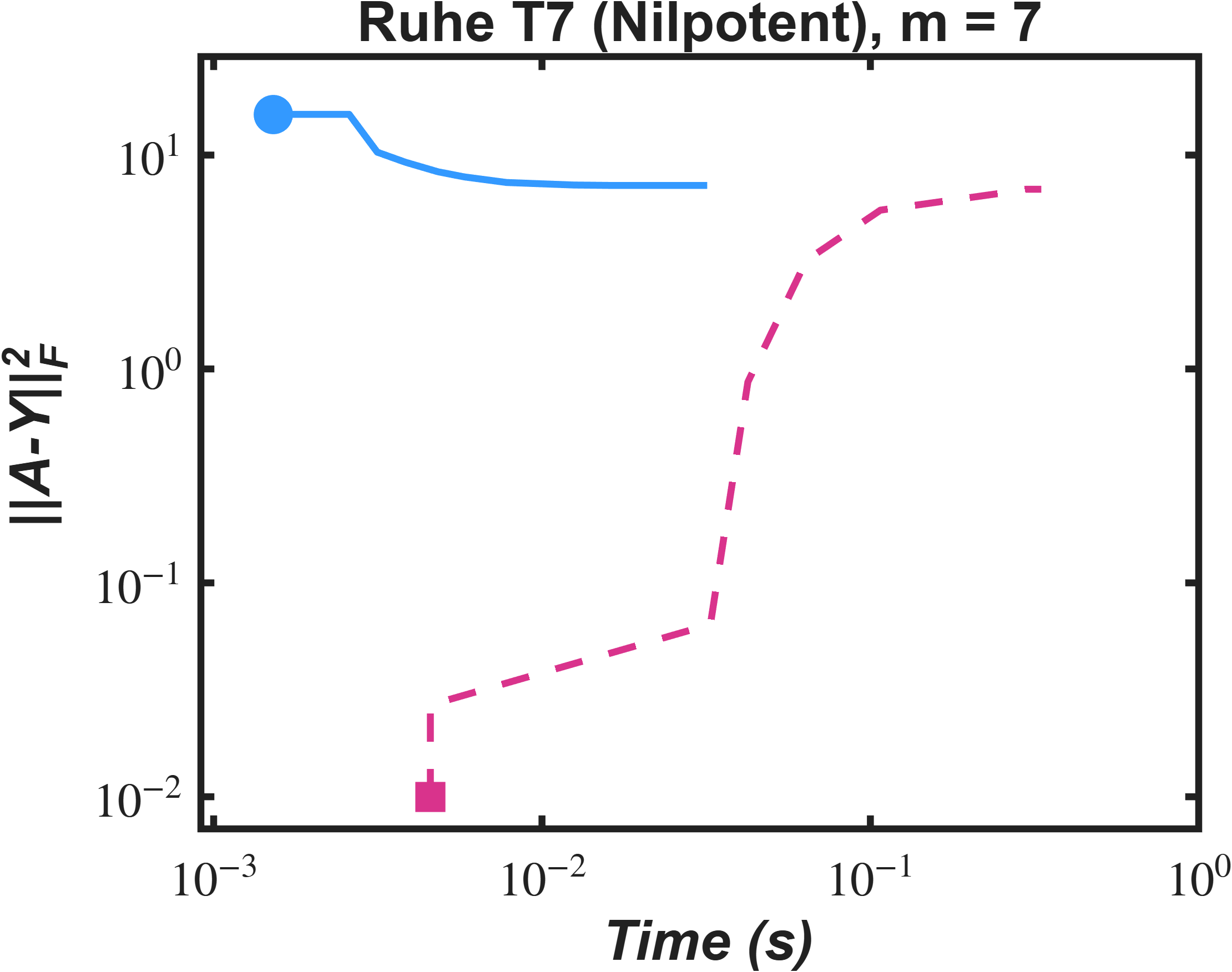}
\end{minipage}\hfill
\begin{minipage}[t]{0.10\textwidth}\vspace*{0pt}
    \phantom{\includegraphics[width=\textwidth]{convgKey2.png}}
\end{minipage}
\end{figure}
\newpage 
\begin{figure}
\centering
\begin{minipage}[t]{0.45\textwidth}\vspace*{0pt}
    \includegraphics[width=\textwidth]{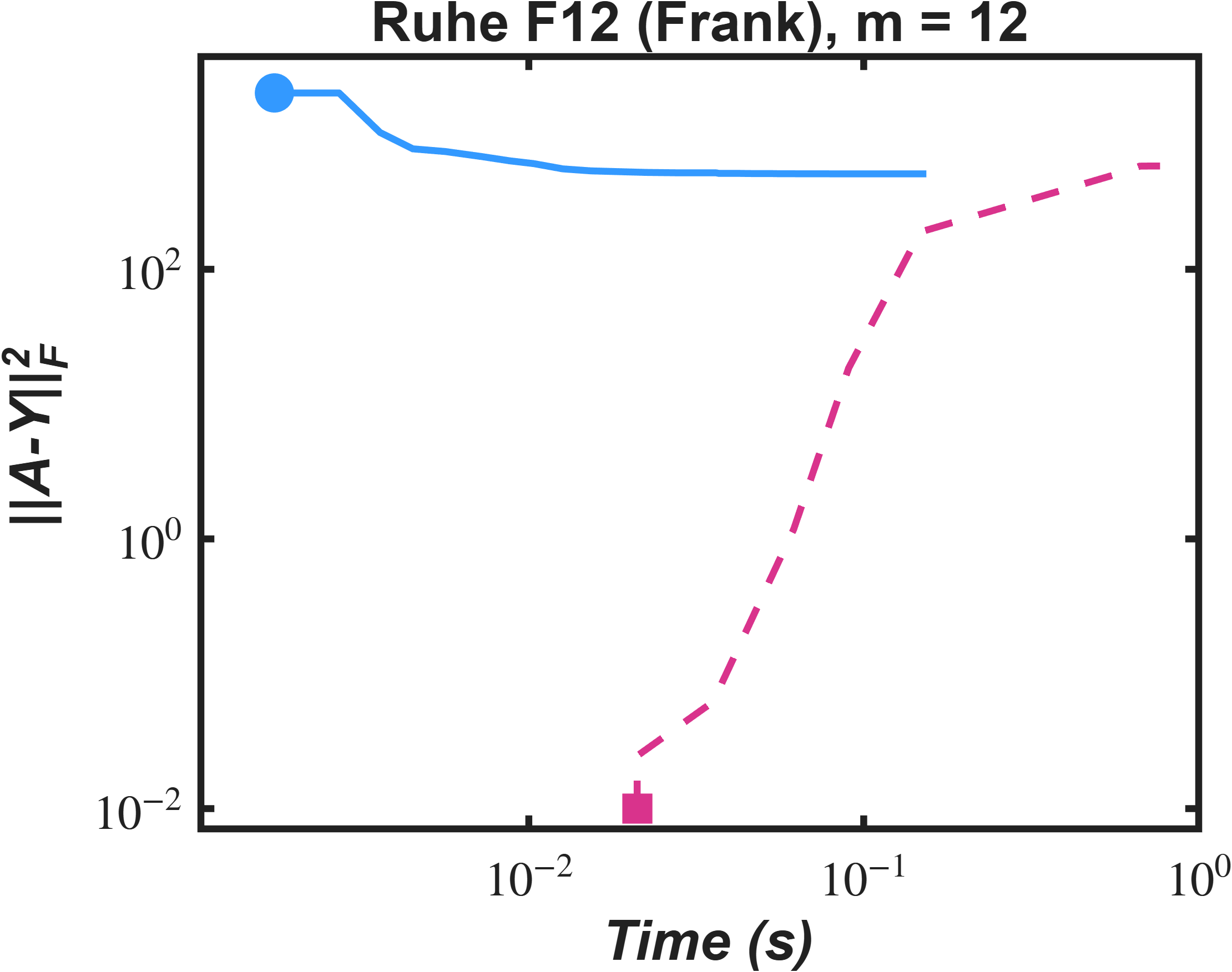}
\end{minipage}
    \caption{Residual $\|A-Y\|_F^2$ (note $A$ is not strictly in $\mathcal{N}_{\R}$) vs.~time to solve, for the matrices in Table~\ref{tab:realliterature}.}\label{fig:realLit}
\end{figure}
The plots in Figure~\ref{fig:realLit} do not meaningfully exhibit any new information that Table~\ref{tab:realliterature} does not already provide. The curve associated with our method in these plots, as in Figure~\ref{fig:realScale}, is strictly in $\mathcal{N}_{\R}$ while Guglielmi and Scalone's curve is in $\M_{m}(\R)$. 

\begin{table}[H]
\centering
\caption{Scaling on random $Y$ with uniformly distributed real entries.}
\label{tab:realscaling}
\begin{tabular}{c|c|c|c|c|c}
\hline
\rule[-1.2ex]{0pt}{4.2ex}$m$ & Method & Final Residual & $\|AA\T-A\T A\|_F$ & Iterations & Time (s) \\
\hline
\multirow{2}{*}{$10$}
  & \lsnpp\ NPP  & 2.433 & 3.2e-15 & 18 & 0.055 \\
  & \lsgs\ G\&S  & 2.433 & 3.1e-07 & 12\,/\,1610 & 0.134 \\
\hline
\multirow{2}{*}{$20$}
  & \lsnpp\ NPP  & 8.473 & 2.1e-14 & 32 & 0.241 \\
  & \lsgs\ G\&S  & 7.082 & 2.3e-06 & 30\,/\,5564 & 1.467 \\
\hline
\multirow{2}{*}{$50$}
  & \lsnpp\ NPP  & 55.806 & 1.9e-13 & 55 & 2.058 \\
  & \lsgs\ G\&S  & 50.360 & 6.3e-06 & 40\,/\,29153 & 48.668 \\
\hline
\multirow{2}{*}{$100$}
  & \lsnpp\ NPP  & 216.913 & 1.0e-12 & 57 & 9.794 \\
  & \lsgs\ G\&S  & 197.108 & 2.0e-05 & 35\,/\,32015 & 234.099 \\
\hline
\end{tabular}
\end{table}

We observe similar trends in Table~\ref{tab:realscaling} to Table~\ref{tab:realliterature}, such as the noticeable difference in final residuals between methods for the $m\in \{20,50,100\}$ cases. In each such instance, residuals again differ by less than 20\% of the larger residual. Our method's advantage in our normality measure $\|AA\T -A\T A\|_F$ and in speed is again apparent. We do not test larger matrices, because the effect of scaling matrix sizes is evident from these four matrix sizes alone, and Guglielmi and Scalone's algorithm can take a considerable amount of time to terminate as matrix sizes grow much larger.

While Guglielmi and Scalone's method obtained lower residuals than our approach on $6/9$ of the tested instances on the Real CNP, it is inconclusive to say whether their method is generally superior to ours at determining closer real normal matrices. Perhaps an improved initialization strategy for our method would improve final residuals; on a highly nonconcave landscape, a random start may be detrimental, whereas a start which is more proximal to $Y$ (such as Guglielmi and Scalone's, albeit their algorithm functions completely differently) may be beneficial. Furthermore, the amount of time saved via our method would allow one to run potentially multiple starts (from different initial $Q_0$) before Guglielmi and Scalone's method terminates; this strategy would increase the likelihood of finding a more optimal real normal matrix. 

\begin{figure}[H]
    \centering
    \begin{minipage}[t]{0.45\textwidth}\vspace*{0pt}
    \includegraphics[width=\textwidth]{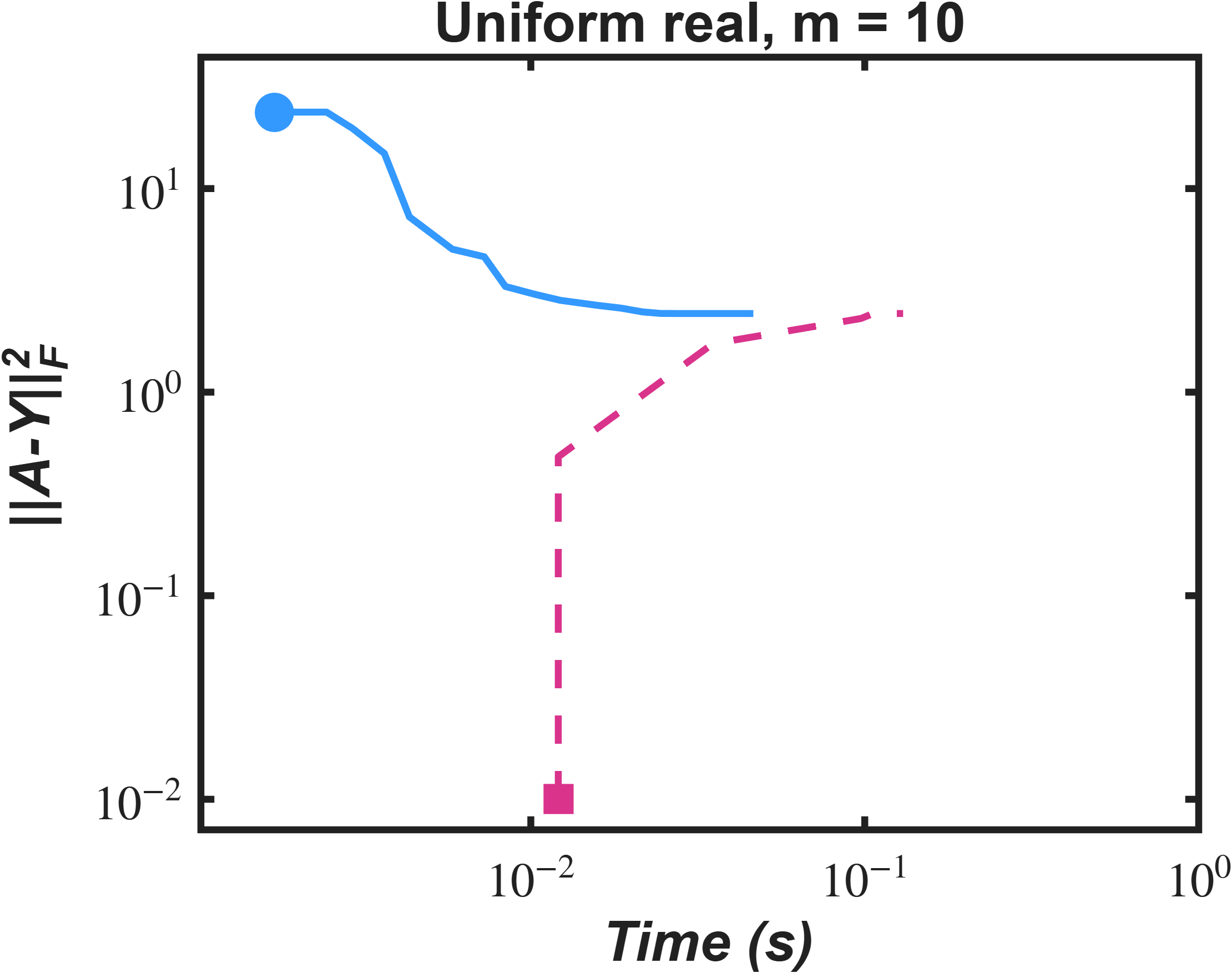}
\end{minipage}\hfill
\begin{minipage}[t]{0.45\textwidth}\vspace*{0pt}
    \includegraphics[width=\textwidth]{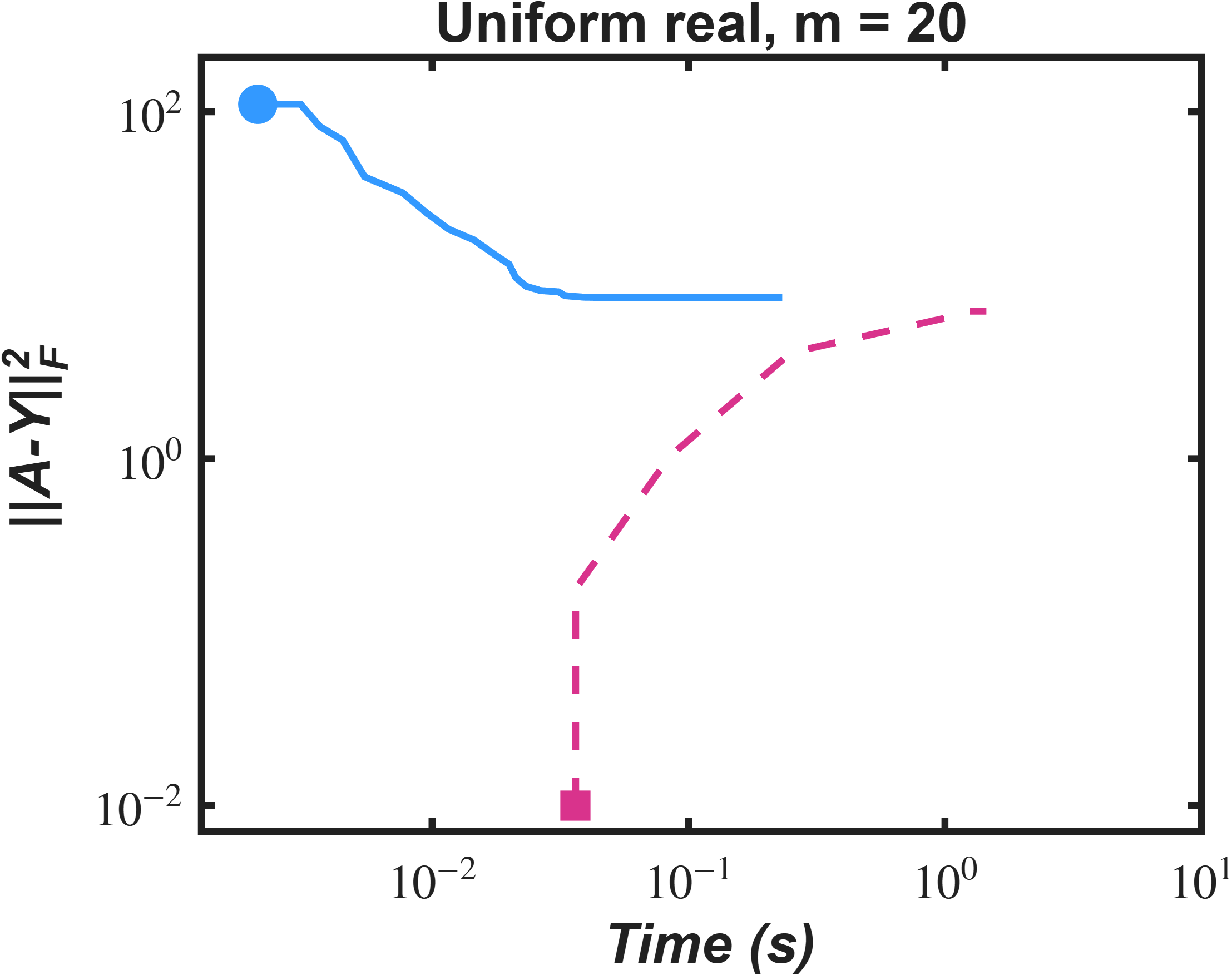}
\end{minipage}\hfill
\begin{minipage}[t]{0.10\textwidth}\vspace*{6.5pt}
    \includegraphics[width=\textwidth]{convgKey2.png}
\end{minipage}\\[5pt]
\begin{minipage}[t]{0.45\textwidth}\vspace*{0pt}
    \includegraphics[width=\textwidth]{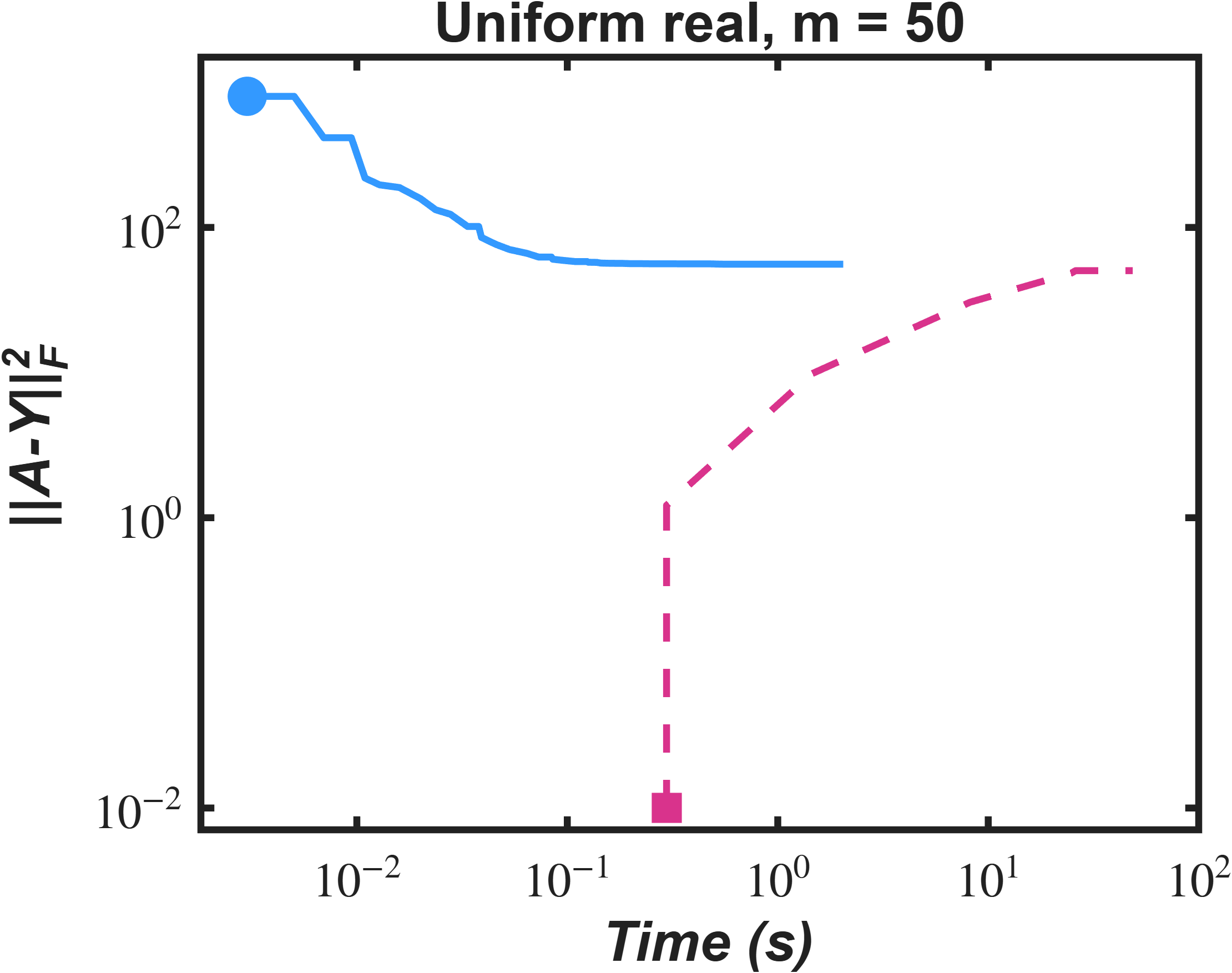}
\end{minipage}\hfill
\begin{minipage}[t]{0.45\textwidth}\vspace*{0pt}
    \includegraphics[width=\textwidth]{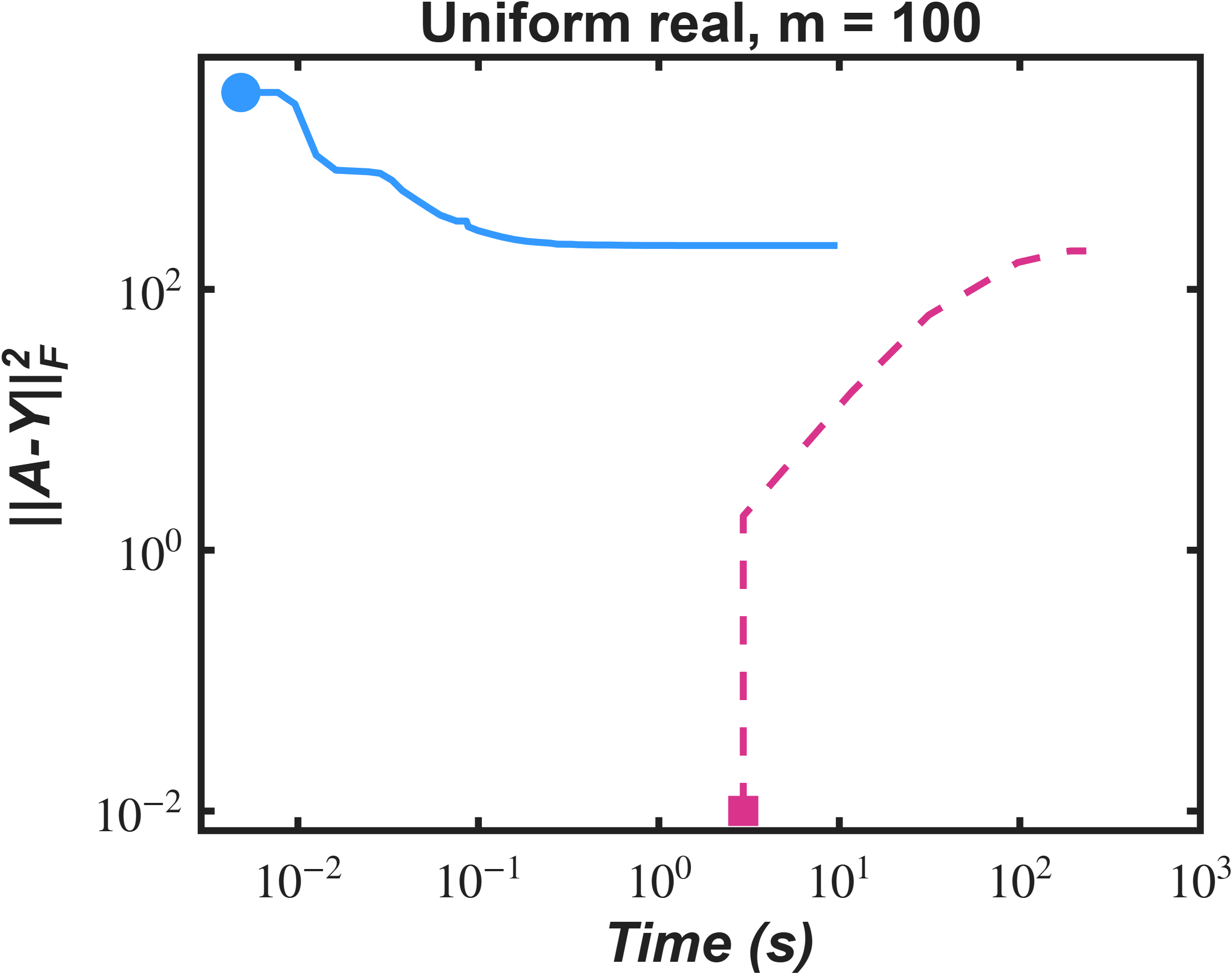}
\end{minipage}\hfill
\begin{minipage}[t]{0.10\textwidth}\vspace*{0pt}
    \phantom{\includegraphics[width=\textwidth]{convgKey2.png}}
\end{minipage}    \caption{Residual $\|A-Y\|_F^2$ (note $A$ is not strictly in $\mathcal{N}_{\R}$) vs.~time to solve, for the matrices in Table~\ref{tab:realscaling}.}\label{fig:realScale}
\end{figure}

Figure~\ref{fig:realScale} visualizes our findings in Table~\ref{tab:realscaling} and again does not provide any new information worth noting.

\section{Conclusion}
In this paper, we reduce the Normal Procrustes Problem, in both its complex and real forms, to a Riemannian optimization problem over a matrix manifold---$\mathrm{U}(m)$ in the complex case and $\mathrm{O}(m)$ in the real case. We derive the Riemannian gradient and Hessian of each reduced objective, verify them numerically, and implement them to run a second-order trust-region solver. To the best of our knowledge, this is the first method addressing the NPP in this generality; setting $X = I_m$ recovers a new Riemannian approach to the well-studied Closest Normal Matrix Problem and Real Closest Normal Matrix Problem.

Our results on the complex NPP emphasize our solver's consistency across starts and accuracy in finding global optima---demonstrating our method's ability to address the Normal Targeting Problem. Our method also matches the final residuals of both Ruhe's and Guglielmi and Scalone's algorithms on the CNP while scaling more favorably in wall-clock time.

The real case introduced a couple of hiccups for our solver, notably through the suboptimal final residuals obtained for both the Real NPP and Real CNP---likely due to an increased nonconcavity on the landscape of the real objective function. Further work could be done to address this issue, such as an improved initialization strategy (that still avoids degeneracies) or multiple starts. 

\printbibliography

@article{ltp,
title = {The linear targeting problem},
journal = {Linear Algebra and its Applications},
volume = {720},
pages = {91-108},
year = {2025},
issn = {0024-3795},
doi = {https://doi.org/10.1016/j.laa.2025.04.017},
author = {Kyle Bierly and Stephan Ramon Garcia and Roger A. Horn}
}

@article{manopt,
  author  = {Boumal, Nicolas and Mishra, Bamdev and Absil, P.-A. and Sepulchre, Rodolphe},
  title   = {Manopt, a {MATLAB} Toolbox for Optimization on Manifolds},
  journal = {Journal of Machine Learning Research},
  volume  = {15},
  pages   = {1455--1459},
  year    = {2014}
}

@article{rtr,
  author  = {Absil, P.-A. and Baker, C. G. and Gallivan, K. A.},
  title   = {Trust-Region Methods on {R}iemannian Manifolds},
  journal = {Foundations of Computational Mathematics},
  volume  = {7},
  number  = {3},
  pages   = {303--330},
  year    = {2007},
  doi     = {10.1007/s10208-005-0179-9}
}

@misc{zhukov,
  author       = {Zhukov, Matvei},
  title        = {Computing Nearest Normal Matrix Using {R}iemannian Optimization},
  howpublished = {Poster, Workshop on Numerical Linear Algebra,
                  Foundations of Computational Mathematics 2026,
                  Vienna, Austria},
  month        = jul,
  year         = {2026}
}

@article{noferini,
  title={Nearest $\Omega$-stable matrix via Riemannian optimization},
  author={Noferini, Vanni and Poloni, Federico},
  journal={Numerische Mathematik},
  volume={148},
  number={4},
  pages={817--851},
  year={2021},
  publisher={Springer},
  doi={10.1007/s00211-021-01217-4}
}

@book{gower,
    author = {Gower, John C. and Dijksterhuis, Garmt B.},
    title = {Procrustes Problems},
    publisher = {Oxford University Press},
    year = {2004},
    month = {01},
    isbn = {9780198510581},
    doi = {10.1093/acprof:oso/9780198510581.001.0001},
    url = {https://doi.org/10.1093/acprof:oso/9780198510581.001.0001},
}

@Article{ruhe,
  author = {Ruhe, Axel},
  title = {Closest normal matrix finally found!},
  journal = {BIT Numerical Mathematics},
  year = {1987},
  volume = {27},
  pages = {585--598},
  doi = {10.1007/BF01937278},
  url = {https://link.springer.com/article/10.1007/BF01937278}
}

@article{gillis,
title = {A semi-analytical approach for the positive semidefinite Procrustes problem},
journal = {Linear Algebra and its Applications},
volume = {540},
pages = {112-137},
year = {2018},
issn = {0024-3795},
doi = {https://doi.org/10.1016/j.laa.2017.11.023},
url = {https://www.sciencedirect.com/science/article/pii/S0024379517306511},
author = {Nicolas Gillis and Punit Sharma}
}

@article{scalone,
author = {Guglielmi, Nicola and Scalone, Carmela},
title = {Computing the closest real normal matrix and normal completion},
year = {2019},
issue_date = {Dec 2019},
volume = {45},
number = {5–6},
issn = {1019-7168},
url = {https://doi.org/10.1007/s10444-019-09717-6},
doi = {10.1007/s10444-019-09717-6},
journal = {Advances in Computational Mathematics},
month = dec,
pages = {2867–2891}
}

@inproceedings{higham,
  author =        {Nicholas J. Higham},
  booktitle =     {Applications of Matrix Theory},
  editor =        {M. J. C. Gover and S. Barnett},
  pages =         {1-27},
  publisher =     {Oxford University Press},
  title =         {Matrix Nearness Problems and Applications},
  year =          {1989},
  url =  {https://nhigham.com/wp-content/uploads/2023/10/high89n.pdf}
}

@book{horn,
  address = {Cambridge; New York},
  author = {Horn, Roger A. and Johnson, Charles R.},
  edition = {2nd},
  isbn = {9780521839402},
  publisher = {Cambridge University Press},
  refid = {849499908},
  title = {Matrix Analysis},
  year = 2013
}

@article{jiang,
title = {On normal extensions of submatrices},
journal = {Linear Algebra and its Applications},
volume = {370},
pages = {301-314},
year = {2003},
doi = {https://doi.org/10.1016/S0024-3795(03)00414-2},
url = {https://www.sciencedirect.com/science/article/pii/S0024379503004142},
author = {Chung-Chou Jiang and Kung-Hwang Kuo}
}

@article{hurley,
author={Hurley, John R. and Cattell, Raymond B.},
year={1962},
month={Apr 01},
title={The Procrustes Program: Producing Direct Rotation to Test a Hypothesized Factor Structure},
journal={Behavioral Science},
volume={7},
number={2},
pages={258},
isbn={0005-7940},
url={https://www.proquest.com/scholarly-journals/procrustes-program-producing-direct-rotation-test/docview/1301270437/se-2},
}

@article{kovac,
title = {Finding the closest normal structured matrix},
journal = {Linear Algebra and its Applications},
volume = {617},
pages = {49-77},
year = {2021},
issn = {0024-3795},
doi = {https://doi.org/10.1016/j.laa.2021.01.013},
url = {https://www.sciencedirect.com/science/article/pii/S0024379521000240},
author = {Erna {Begović Kovač}}
}

@book{boumal, place={Cambridge}, title={An Introduction to Optimization on Smooth Manifolds}, publisher={Cambridge University Press}, author={Boumal, Nicolas}, year={2023}}

@article{gabriel,
  author  = {Gabriel, Richard},
  title   = {The Normal {$\Delta H$}-Matrices with Connection to Some {Jacobi}-like Methods},
  journal = {Linear Algebra and its Applications},
  volume  = {91},
  pages   = {181--194},
  year    = {1987}
}

@article{bahar,
author = {Arslan, Bahar and Noferini, Vanni and Tisseur, Fran\c{c}oise},
title = {The Structured Condition Number of a Differentiable Map between Matrix Manifolds, with Applications},
journal = {SIAM Journal on Matrix Analysis and Applications},
volume = {40},
number = {2},
pages = {774-799},
year = {2019},
doi = {10.1137/17M1148943},
URL = { 
        https://doi.org/10.1137/17M1148943
}
}

@article{henrici,
  author  = {Henrici, Peter},
  title   = {Bounds for iterates, inverses, spectral variation and fields of values of non-normal matrices},
  journal = {Numerische Mathematik},
  year    = {1962},
  volume  = {4},
  number  = {1},
  pages   = {24--40},
  doi     = {10.1007/BF01386294},
  url     = {https://doi.org/10.1007/BF01386294},
  issn    = {0945-3245}
}

@mastersthesis{barl,
  author  = {Barl, Filip},
  title   = {Higher-Rank Numerical Range of Almost Normal Matrices},
  school  = {Jagiellonian University},
  address = {Krak\'{o}w, Poland},
  year    = {2014},
  month   = jun,
  note    = {Supervisor: K.~\.{Z}yczkowski},
  url     = {https://chaos.if.uj.edu.pl/~karol/prace/Barl14.pdf}
}

@article{mityagin,
  author  = {Mityagin, Boris S.},
  title   = {The Zero Set of a Real Analytic Function},
  journal = {Mathematical Notes},
  volume  = {107},
  number  = {3},
  pages   = {529--530},
  year    = {2020},
  doi     = {10.1134/S0001434620030189}
}

@article{callan,
  title={A combinatorial survey of identities for the double factorial},
  author={David Callan},
  journal={arXiv preprint arXiv:0906.1317},
  year={2009},
  url={https://api.semanticscholar.org/CorpusID:9757095}
}

\end{document}